\newcommand\cy{\operatorname{CY}_3}
\newcommand\conf{\operatorname{C}}
\newcommand\PP{\mathbb P}
\newcommand\A{\mathcal{A}}
\newcommand\C{\mathbb C}
\newcommand\D{\mathcal{D}}
\newcommand\Q{\mathbb Q}
\newcommand\R{\mathbb R}
\newcommand\Z{\mathbb Z}
\renewcommand{\H}{\mathbb{H}}
\newcommand{\calH}{\mathcal{H}}
\newcommand{\LL}{\mathcal{L}}
\newcommand{\calS}{\mathcal{S}}
\newcommand{\U}{\mathcal{U}}
\newcommand{\V}{\mathcal{V}}
\newcommand\Aut{\operatorname{Aut}}
\newcommand\ad{\operatorname{ad}}
\newcommand\dt{\operatorname{DT}}
\newcommand\stab{\operatorname{Stab}}
\newcommand\bra{\langle}
\newcommand\ket{\rangle}
\newcommand{\pow}[1]{[\![ {#1} ]\!]}
\newcommand{\tg}[1]{T{#1}}
\newcommand{\del}{\partial}
\newcommand{\Hom}{\operatorname{Hom}}
\newcommand{\ext}{\operatorname{ext}}
\newcommand{\im}{\operatorname{im}}
\newcommand{\len}{\operatorname{len}}
\makeatletter \@addtoreset{equation}{section} \makeatother
\newtheorem{thmIntro}{Theorem}
\newtheorem{thm}{Theorem}[section]
\newtheorem{prop}[thm]{Proposition}
\newtheorem{lem}[thm]{Lemma}
\newtheorem{cor}[thm]{Corollary}
\theoremstyle{definition}
\newtheorem{definition}[thm]{Definition}
\newtheorem{exm}[thm]{Example} 
\newtheorem{rmk}[thm]{Remark}
\newcommand\narrowdots{\hbox to 1em{.\hss.\hss.}}
\title[]{A construction of Frobenius manifolds \\ from stability conditions}
\author{Anna Barbieri, Jacopo Stoppa and Tom Sutherland}
\date{}
\address{The University of Sheffield, Hicks Building, Hounsfield Road, Sheffield S3 7RH, United Kingdom}
\email{a.barbieri@sheffield.ac.uk}
\address{SISSA, via Bonomea 265, 34136 Trieste, Italy}
\email{jstoppa@sissa.it}
\address{Institut f\"ur Mathematik, Johannes Guthenberg-Universit\"at, Staudingerweg 9, 55128 Mainz, Germany}
\email{sutherland@uni-mainz.de}
\begin{document}

\begin{abstract} A finite quiver $Q$ without loops or $2$-cycles defines a $\cy$ triangulated category $\D(Q)$ and a finite heart $\A(Q) \subset \D(Q)$. We show that if $Q$ satisfies some (strong) conditions then the space of stability conditions $\stab(\A(Q))$ supported on this heart admits a natural family of semisimple Frobenius manifold structures, constructed using the invariants counting semistable objects in $\D(Q)$. In the case of $A_n$ evaluating the family at a special point we recover a branch of the Saito Frobenius structure of the $A_n$ singularity $y^2 = x^{n+1}$. 

We give examples where applying the construction to each mutation of $Q$ and evaluating the families at a special point yields a different branch of the maximal analytic continuation of the same semisimple Frobenius manifold. In particular we check that this holds in the case of $A_n$, $n \leq 5$.\\
\noindent \emph{MSC2010:}  14N35, 53D45, 16G20.
\end{abstract}

\maketitle
\setcounter{tocdepth}{1}
\tableofcontents

\section{Introduction}
There is a strong formal analogy between wall-crossing structures for invariants enumerating semistable objects in abelian and triangulated categories and certain data attached to a semisimple Frobenius manifold. We refer to the wall-crossing theory developed in \cite{joy, ks, rein} and in particular to the structures described in \cite{bt_stab, bt_stokes, fgs, gmn, joy, jacopo}, as well as to the connections between spaces of stability conditions and semisimple Frobenius manifolds discovered by Bridgeland (see e.g. \cite{bridFrob, tomFrob}).  

The purpose of the present paper is to present an approach for turning this formal analogy into precise results, at least for some special examples.

We work with a well-studied class of $\cy$ triangulated categories $
\D = \D(Q, W)$ attached to quivers with potential $(Q, W)$, see e.g. \cite[Section 7]{bridsmith}. An important example is the $\cy$ category $\D(A_n) = \D(A_n, 0)$ attached to the Dynkin quiver $A_n$. Here we do not consider the difficult problem of identifying (a quotient of) the space of Bridgeland stability conditions $\stab(\D)$ globally with a manifold which is known a priori to admit a natural Frobenius structure (see e.g. \cite{tomFrob, bridsmith, sutherland}). Instead we first study in detail the different, local problem of constructing a germ of a semisimple Frobenius manifold using only the invariants enumerating semistable objects in $\D$. In the second part of the paper we work out how this germ changes under mutation of $Q$, and relate this to analytic continuation of the germ, in some special examples. So the structures we describe do not live on (a quotient of) $\stab(\D)$, but rather on the natural domain of definition of $n$-dimensional semisimple Frobenius structures, the configuration space of $n$ points in $\C$. More precisely we develop two main ideas.\\

\noindent $(A)$ Fixing a finite heart $\A \subset \D$, with $n$ isomorphism classes of simple objects, we study how to use the invariants enumerating semistable objects in $
\D$ to endow the space of stability conditions $\stab(\A) \cong \bar{\calH}^n$ supported on $\A$ with the structure of a semisimple Frobenius manifold. The \emph{canonical coordinates} $u_1, \ldots, u_n$ allow us to regard this semisimple Frobenius structure as living on an open subset of the configuration space 
\begin{equation*}
\conf_n(\C) = \{(u_1, \ldots, u_n) \in \C^n : i \neq j \Rightarrow u_i \neq u_j\} / \Sigma_n.
\end{equation*} 
Such a structure may then be continued analytically to the universal cover $\widetilde{\conf}_n(\C)$.\\

\noindent $(B)$ Suppose $\A' = \A'(Q', W') \subset \D$ is another heart, where $(Q', W')$ is obtained from $(Q, W)$ by \emph{quiver mutation}. Via the canonical coordinates $u'_i$ this gives a different semisimple Frobenius structure on another open subset of $\conf_n(\C)$. Since the categories $\D(Q)$, $\D(Q')$ are equivalent, it seems natural to ask if the two structures are related by analytic continuation, i.e. if they belong to the same semisimple Frobenius manifold structure on $\widetilde{\conf}_n(\C)$. This would give a way to understand the Frobenius manifold in terms of stability conditions (with different branches corresponding to tilts of the original heart).\\  
 
\noindent Under (very) restrictive conditions we can make the above picture precise.\\

\noindent $(A)$ If $\A \subset \D$ is a finite heart as above then by the results of \cite{AnnaJ} (based on \cite{bt_stab, ks} and especially on the work of Joyce \cite{joy}) there is a well-defined, canonical (but infinite-dimensional) \emph{Frobenius type structure} on an auxiliary bundle over $\stab(\A)$, defined over a ring of formal power series $\C\pow{\bf s}$, ${\bf s} = (s_1, \ldots, s_n)$, depending only on categorical data. There is a formal parameter $s_i$ for each class of a simple object. Joyce's original, formal construction in this case is recovered at the special point ${\bf s}_J = (s_1 = 1, \ldots, s_n = 1)$. The notion of a Frobenius type structure is due to Hertling and arises in the construction of Frobenius manifolds in singularity theory \cite{hert}. By general results of Hertling, a section $\zeta_{\A}$ of this auxiliary bundle can be used to pull back this structure to the tangent bundle $\tg{\stab(\A)}$, and one can give conditions under which this pullback is Frobenius. We say that the heart $\A \subset \D$ is good if there is a (good) section $\zeta_{\A}$ such that the pullback along $\zeta_{\A}$ gives a nontrivial \emph{jet} (quadratic or higher) of semisimple Frobenius manifold structures, i.e. if the pullback is a Frobenius structure modulo terms which lie in $({\bf s})^p$ for some $p \geq 3$. By general theory such jets can always be lifted to genuine families of semisimple Frobenius manifolds, which can then be evaluated at the geometrically meaningful Joyce point ${\bf s}_J$. (The lift is not unique, but for a fixed good heart $\A$ there is a \emph{natural} finite set of lifts, including a canonical \emph{minimal} choice). We give a characterisation of good sections $\zeta_{\A}$ (Corollary \ref{approxFrobType}), and work out the quadratic jets of the Stokes data (generalised monodromy) of the corresponding semisimple Frobenius manifolds (Lemma \ref{StokesLem}). Our whole construction is summarised in Theorem \ref{mainThm}.\\

We provide several examples, and we treat in detail the special case of $\D(A_n)$. (In this case the jet is of order $n$, and all the natural lifts coincide with the canonical one). In particular we prove the following result.

\begin{thmIntro}\label{mainThmAn}
Let $\A(A_n) \subset \D(A_n)$ be the standard heart. The construction described in (A) above (see Theorem \ref{mainThm} and Corollary \ref{AnConnection}) endows the space of stability conditions $\stab(\A(A_n))$ with the structure of a semisimple Frobenius manifold, and this coincides with a  branch of the Saito Frobenius structure on the unfolding space of the $A_n$ singularity $y^2 = x^{n+1}$.\\
\end{thmIntro}

\noindent $(B)$ We extend Theorem \ref{mainThmAn} to all mutations of $A_n$, and we give examples where all the quivers in a mutation class admit good sections, such that evaluating at ${\bf s}_J$ yields different branches of the same semisimple Frobenius manifold on $\conf_n(\C)$. Our examples currently include $A_n$ for $n \leq 5$ (Figure \ref{introA3figure}) as well as some other quivers obtained from triangulations of marked bordered surfaces (Figure \ref{introAnnulusFigure}). 

\begin{thmIntro}\label{mainThmMutations} Let $Q$ be a quiver mutation equivalent to $A_n$, $\A(Q) \subset \D(Q) \cong \D(A_n)$ the corresponding heart.
\begin{enumerate}  
\item[(i)] The construction described in $(A)$ above (see Theorem \ref{mainThm} and Lemma \ref{AnConnectionMutation}) endows $\stab(\A(Q))$ with the structure of a semisimple Frobenius manifold.

\item[(ii)] Suppose $n \leq 5$. Then this semisimple Frobenius manifold coincides with a branch of the maximal analytic continuation of the Saito Frobenius structure for the $A_n$ singularity $y^2 = x^{n+1}$.
\end{enumerate}
\end{thmIntro}

We expect that the statement (ii) can be generalised to all values of $n$. In the simplest case this gives a way to reconstruct the Saito Frobenius structure for the $A_2$ singularity $y^2 = x^3$ from the $\cy$ category $\D(A_2)$.  

Recall that analytic continuation of semisimple Frobenius manifolds can be understood in terms of a braid group action on Stokes matrices. We prove Theorem \ref{mainThmMutations} (ii) by writing down explicit braid relations for mutations of the basic quiver $A_n$. For simple mutations we observe a neat correspondence between mutations and braidings, but the general picture seems quite complicated (Cotti, Dubrovin and Guzzetti explained to us that a similar phenomenon occurs in their work on the quantum cohomology of Grassmannians \cite{cotti}).

\begin{figure}[ht]
\begin{equation*}
\begin{xy} 0;<.3pt,0pt>:<0pt,-.3pt>:: 
(0,0) *+{1} ="1",
(125,0) *+{2} ="2",
(243,0) *+{3} ="3",
"1", {\ar"2"},
"2", {\ar"3"},
\end{xy}
\quad\quad
\begin{xy} 0;<.3pt,0pt>:<0pt,-.3pt>:: 
(0,0) *+{1} ="1",
(126,123) *+{2} ="2",
(243,0) *+{3} ="3",
"2", {\ar"1"},
"1", {\ar"3"},
"3", {\ar"2"},
\end{xy}
\end{equation*}
\caption{Mutation equivalent quivers corresponding to hearts for $\D(A_3)$. They also correspond to different branches of the $A_3$ Frobenius manifold.}
\label{introA3figure}
\end{figure}
\begin{figure}[ht]
\begin{equation*}
\begin{xy} 0;<.3pt,0pt>:<0pt,-.3pt>:: 
(0,0) *+{1} ="1",
(261,0) *+{2} ="2",
(134,140) *+{3} ="3",
"1", {\ar"2"},
"1", {\ar"3"},
"2", {\ar"3"},
\end{xy}
\quad\quad
\begin{xy} 0;<.3pt,0pt>:<0pt,-.3pt>:: 
(0,0) *+{1} ="1",
(260,0) *+{2} ="2",
(134,140) *+{3} ="3",
"2", {\ar"1"},
"1", {\ar|*+{\scriptstyle 2}"3"},
"3", {\ar"2"},
\end{xy}
\end{equation*}
\caption{Mutation equivalent triangulation quivers for the annulus. They also correspond to different branches of the same semisimple Frobenius manifold.}
\label{introAnnulusFigure}
\end{figure}

\medskip

\noindent\textbf{Plan of the paper.} Section \ref{backgroundSection} contains background material about the infinite-dimensional Frobenius type structure on $\stab(\A)$. Section \ref{projectionSec} uses this to construct jets of families of finite-dimensional, semisimple Frobenius type structures, and Section \ref{liftSection} discusses how to lift these jets naturally to genuine families. Both sections contain some explicit examples. Section \ref{pullbackSection} recalls the notion of a Frobenius manifold, due to Dubrovin \cite{dubrovin}, and then uses Hertling's pullback to turn our structures into families of semisimple Frobenius manifolds. There are further explicit examples in Section \ref{examplesSec}. Section \ref{AnSection} contains the proof of Theorem \ref{mainThmAn}. Section \ref{mutationsSec} discusses the relation between mutations and analytic continuation and contains the proof of Theorem \ref{mainThmMutations}. 

\medskip

\noindent\textbf{Acknowledgements.} We would like to thank Tom Bridgeland, Sara Angela Filippini, Mario Garcia-Fernandez and Giordano Cotti. We are grateful to the anonymous Referee for a careful reading of the manuscript.

The research leading to these results has received funding from the European Research Council under the European Union's Seventh Framework Programme (FP7/2007-2013) / ERC Grant agreement no. 307119.

\section{Infinite-dimensional Frobenius type structures}\label{backgroundSection}

We start by briefly recalling the categorical setup described e.g.\ in \cite[Section 7]{bridsmith}. Let $\D$ be a $\C$-linear triangulated category of finite type. Following \cite{bridsmith} in this paper we always assume that
\begin{enumerate}
\item[$\bullet$] $\D$ admits a bounded t-structure whose heart $\A \subset \D$ is a finite abelian category, with $n$ distinct simple objects up to isomorphism; 
\item[$\bullet$] $\D$ is $\cy$, that is for all $A, B \in \D$ there are functorial isomorphisms
\begin{equation*}
\Hom_{\D}(A, B[i]) = \Hom_{\D}(B, A[3-i])^*.
\end{equation*}
\end{enumerate}
The finiteness condition is especially restrictive. It implies that the Grothe\-dieck group $K(\D)$ (generated by isomorphism classes of objects modulo the relations given by exact triangles) is isomorphic to $\Z^n$. 
\begin{definition} The Euler form on $K(\D)$ is defined by
\begin{equation*}
\bra E, F\ket = \sum_{i \in \Z} (-1)^i \hom_{\D}(E, F[i]). 
\end{equation*}
By the $\cy$ condition this is a skew-symmetric bilinear form on $K(\D)$.
\end{definition}
One can associate a quiver to a category $\D$ as above with a fixed finite heart $\A\subset \D$. 
\begin{definition} Let $\A \subset \D$ be a finite heart. We define a quiver $Q(\A)$ given by
\begin{enumerate}
\item[$\bullet$] the set of vertices is the set of isomorphism classes of simples $S_i \in \A$; 
\item[$\bullet$] there are $n_{ij} = \ext^1_{\A}(S_i, S_j)$ arrows between the vertices $i$ and $j$.
\end{enumerate} 
\end{definition}
The main point is that, fixing a choice of potential, one can essentially reverse this construction.
\begin{thm}[{\cite[Theorem 7.2]{bridsmith}}] Let $(Q, W)$ be a quiver with reduced potential. Then one can construct a $\C$-linear, finite type $\cy$ category $\D(Q, W)$, and a finite heart $\A = \A(Q, W) \subset \D(Q, W)$, such that the associated quiver $Q(\A)$ is isomorphic to $Q$. In particular the simple objects of $\A$ are in natural bijection with vertices of $Q$, and the arrows of $Q$ give bases for the extension spaces between them.
\end{thm}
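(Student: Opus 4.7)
The plan is to realise $\D(Q,W)$ as the perfectly-valued derived category of Ginzburg's differential graded algebra $\Gamma = \Gamma(Q,W)$, with $\A$ the canonical t-structure heart. First I would construct $\Gamma$ explicitly: form the doubled quiver $\tilde{Q}$ with the original arrows of $Q$ in degree $0$, a reversed arrow $a^*: j \to i$ in degree $-1$ for each arrow $a: i \to j$, and a loop $t_i$ in degree $-2$ at each vertex. Then $\Gamma$ is the completed path algebra of $\tilde{Q}$ equipped with the differential $d(a) = 0$, $d(a^*) = \del_a W$, $d(t_i) = e_i \bigl(\sum_a [a,a^*]\bigr) e_i$; cyclic invariance of $W$ gives $d^2 = 0$. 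Setting $\D(Q,W) := \operatorname{pvd}(\Gamma)$, the full triangulated subcategory of the derived category of right dg-$\Gamma$-modules with finite-dimensional total cohomology, produces a $\C$-linear triangulated category of finite type.

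The central step is verifying the $\cy$ property. Following Keller, this reduces to exhibiting a self-dual bimodule resolution of $\Gamma$ of length three. Concretely, one writes down the \emph{Ginzburg complex}, a length-three projective resolution of $\Gamma$ as a $\Gamma$-bimodule whose differentials are built from the partial derivatives $\del_a W$, and checks that it is quasi-isomorphic to its shifted linear dual $R\Hom_{\Gamma^e}(\Gamma, \Gamma^e)[3]$. Once this self-duality is established, Keller's Calabi-Yau completion formalism produces functorial pairings $\Hom_{\D}(A, B[i]) \cong \Hom_{\D}(B, A[3-i])^*$ on the subcategory of objects with finite-dimensional cohomology, as required.

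Next I would identify the standard heart. Assuming Jacobi-finiteness of $(Q,W)$ — implicit in the statement since it is needed for the heart to be finite — one has $H^0(\Gamma) \cong J(Q,W) = \C Q / \bra \del_a W \ket_{a \in Q_1}$ finite-dimensional over $\C$. The t-structure on $\D(Q,W)$ inherited from the standard t-structure on dg-modules then has heart $\A(Q,W) \simeq \operatorname{mod}(J(Q,W))$, a finite abelian category whose simple objects are the one-dimensional modules $S_i$ supported at the vertices $i$ of $Q$, giving the claimed bijection between simples of $\A$ and vertices of $Q$.

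Finally I would match the arrows. Reading off the first two terms $e_i \Gamma \to \bigoplus_{a: i \to j} e_j \Gamma$ of the Ginzburg projective resolution of $S_i$ and applying $\Hom_{\D}(-, S_j)$ yields a canonical identification of $\ext^1_{\A}(S_i, S_j)$ with the span of the arrows $a: i \to j$ of $Q$. Hence $Q(\A)$ is isomorphic to $Q$ with the claimed basis of extension spaces. The main obstacle throughout is the $\cy$ property: this requires both the careful choice of doubled quiver and degrees ensuring $d^2 = 0$, and the subtle verification that the Ginzburg bimodule complex is self-dual up to a shift by $3$, which is precisely where the potential $W$ enters in an essential way.
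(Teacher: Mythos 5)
The paper does not prove this statement at all: it is quoted directly from Bridgeland--Smith (Theorem 7.2 of \emph{Quadratic differentials as stability conditions}), and the proof behind that citation is precisely the Ginzburg/Keller--Yang construction you outline --- the completed Ginzburg dg algebra of $(Q,W)$, its finite-dimensional derived category, Keller's bimodule $3$-Calabi--Yau self-duality giving the $\cy$ pairings, and the standard heart of finite-dimensional modules over the Jacobian algebra with vertex simples and $\ext^1$ based by arrows --- so your sketch matches the intended argument. The one inaccuracy is your claim that Jacobi-finiteness is implicitly required for the heart to be finite: because one works with the \emph{completed} Ginzburg algebra, every finite-dimensional module over $H^0(\Gamma)$ is nilpotent, so the heart is automatically a finite length category with exactly the $n$ vertex simples, and no finiteness hypothesis on $J(Q,W)$ is needed.
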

In the rest of this paper we write $\A$ for a category $\A(Q, W)$, with its natural embedding 
\begin{equation*}
\A = \A(Q, W) \subset \D(Q, W) = \D. 
\end{equation*}
In particular the natural inclusion of Grothendieck groups $K(\A) \hookrightarrow K(\D)$ is an isomorphism. So $K(\A)$ is the lattice generated by the classes of simples $[S_i], i = 1, \ldots, n$. 

\begin{definition} The effective cone $K(\A)_{> 0} \subset K(\A)$ is the submonoid generated by the classes of simples $[S_i], i = 1, \ldots, n$. Equivalently it is the submonoid of $K(\A)$ generated by classes of nonzero objects. 
\end{definition}

Let us denote by $\bar{\calH}$ the semi-closed upper half plane $\H \cup \R_{< 0}$.

\begin{definition} The space of stability conditions $\stab(\A)$ is the open subset 
\begin{equation*}
\{ Z \in \Hom(K(\A), \C) \, : \, Z(K_{>0}(\A)) \subset \bar{\calH} \} \subset \Hom(K(\A), \C).
\end{equation*}
\end{definition}

Thus $\stab(\A)$ is a complex manifold, biholomorphic to $\bar{\calH}^n$. We refer to points $Z \in \stab(\A)$ as central charges. A stability condition for $\D$ is determined by a pair $(\A,Z)$, consisting of a heart $\A$ and a central charge $Z$ (see e.g. \cite[Section 7.5]{bridsmith}). 

There is a well-defined enumerative theory for semistable objects of a given class in $\D$, \cite[Chapter 7]{js}.
\begin{definition} We denote by $\dt_{\A}(\alpha, Z) \in \Q$ the Donaldson-Thomas type invariant virtually enumerating objects in $\D$ of class $\alpha \in K(\D) \cong K(\A)$ which are semistable with respect to the stability condition determined by the pair $(\A, Z)$.  
\end{definition}

The shift functor $[1] \in \Aut(\D)$ preserves the class of semistable objects and acts on $K(\D)$ as $-I$, so we have
\begin{equation*}
\dt_{\A}(\alpha, Z) = \dt_{\A}(-\alpha, Z).
\end{equation*} 

\medskip

We turn to describing the natural infinite-dimensional Frobenius type structure on the space of stability conditions $\stab(\A)$. We need some preliminary notions.

\begin{definition} We denote by $\C[K(\A)]$ the twisted group-algebra on $K(\A)$. It is generated by $x_{\alpha}, \alpha \in K(\A)$, with commutative product 
\begin{equation*}
x_{\alpha} x_{\beta} = (-1)^{\bra \alpha, \beta \ket}x_{\alpha + \beta}.
\end{equation*}
It is a Poisson Lie algebra with the bracket
\begin{equation*}
[x_{\alpha}, x_{\beta}] = (-1)^{\bra \alpha, \beta \ket} \bra \alpha, \beta\ket x_{\alpha + \beta}.
\end{equation*}
\end{definition}

The classes $[S_i] \in K(\A)_{> 0}$ give a canonical basis of $K(\A)$ with respect to which we decompose every other class, 
\begin{equation*}
\alpha = \sum^n_{i = 1} a_i [S_i]. 
\end{equation*}  
We write 
\begin{equation*}
[\alpha]_{\pm} = \sum^n_{i = 1} [a_i]_{\pm} [S_i]
\end{equation*}
where $[a_i]_{\pm}$ denote the positive and negative parts, i.e.\ $[a_i]_{+} = \max(a_i, 0)$, $[a_i]_{-} = \min(a_i, 0)$, and we define
\begin{equation*}
\len(\alpha) = \sum^n_{i = 1} | a _i |. 
\end{equation*}
Let $s_1, \ldots, s_n$ denote formal parameters. We denote this collection of formal parameters by ${\bf s}$, and set 
\begin{equation*}
{\bf s}^{\alpha} = \prod^n_{i = 1} s^{a_i}_i.
  \end{equation*}
In particular ${\bf s}^{[\alpha]_{+} - [\alpha]_{-}}$ is a monomial, not just a Laurent monomial. We denote the maximal ideal and its powers by
\begin{equation*}
({\bf s})^{N} = (s_1, \ldots, s_n)^N \subset \C[s_1, \ldots, s_n].
\end{equation*}

Let us introduce the coefficients 
\begin{equation*} 
c(\alpha_1, \cdots, \alpha_k) = \sum_T \frac{1}{2^{k - 1}}\prod_{\{i \to j\} \subset T} (-1)^{\bra \alpha_i, \alpha_j\ket} \bra \alpha_i, \alpha_j\ket.
\end{equation*}
where the sum is over all connected trees $T$ with vertices labelled by $\{1, \narrowdots, k\}$, endowed with an orientation compatible with the labelling.
\begin{definition} For $\alpha \neq 0$ we introduce the complex-valued formal power series in $\bf{ s}$ given by  
\begin{align}\label{fFunFps}
\nonumber  f ^{\alpha}_{\bf s}(Z) = &\sum_{\alpha_1 + \cdots + \alpha_k = \alpha,\, Z(\alpha_i) \neq 0} c(\alpha_1, \ldots, \alpha_k) J_k(Z(\alpha_1), \ldots , Z(\alpha_k))\\ &\prod^k_{i = 1} {\bf s}^{[\alpha_i]_+ - [\alpha_i]_-} \dt_{\A}(\alpha_i, Z),
\end{align}
where $J_k\!: (\C^*)^k \to \C$ are the sectionally holomorphic functions introduced by Joyce \cite{joy}. Note that this is well-defined because there are only finitely many decompositions in \eqref{fFunFps} modulo $({\bf s})^N$ for $N \gg 1$. The holomorphic generating function of $\dt_{\A}$ is the holomorphic function with values in $\C[K(\A)]\pow{\bf s}$ given by
\begin{equation*}
\Phi_{\bf s}(Z) = \sum_{\alpha \neq 0}  f ^{\alpha}_{\bf s}(Z) x_{\alpha}. 
\end{equation*}
\end{definition}
\begin{rmk} The original, formal definition of a Joyce holomorphic generating function is recovered at the special point 
\begin{equation*}
{\bf s}_J = (s_1 = 1, \ldots, s_n = 1).
\end{equation*}
We do not know how to give a meaning to this specialisation in general. In the following we will reduce to a finite-dimensional context and specialise to the point ${\bf s}_J$ after the reduction.
\end{rmk}
\begin{prop}[{\cite[Proposition 3.17]{AnnaJ}}] The coefficients of the formal power series $ f ^{\alpha}_{\bf s}(Z)$ are holomorphic functions of $Z \in \stab(\A)$. 
\end{prop}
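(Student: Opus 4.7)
The plan is to fix a monomial in $\bf s$ and show that its coefficient is a finite sum of locally holomorphic contributions which glue across walls in $\stab(\A)$ thanks to Joyce's wall-crossing theorem. First I would fix a class $\beta \in K(\A)_{> 0}$ of length $N = \len(\beta)$ and examine the coefficient of ${\bf s}^\beta$ in $f^\alpha_{\bf s}(Z)$. Because ${\bf s}^{[\alpha_i]_+ - [\alpha_i]_-}$ has total $\bf s$-degree $\len(\alpha_i)$, the constraint $\sum_i([\alpha_i]_+ - [\alpha_i]_-) = \beta$ forces $\sum_i \len(\alpha_i) = N$, so both the number of summands $k$ and each $\len(\alpha_i)$ are bounded by $N$. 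Since $K(\A) \cong \Z^n$ is a finitely generated lattice, only finitely many ordered decompositions $\alpha = \alpha_1 + \cdots + \alpha_k$ satisfying $Z(\alpha_i) \neq 0$ and the $\bf s$-constraint contribute, and the coefficient is a finite $\C$-linear combination of terms of the form $c(\alpha_1, \ldots, \alpha_k) J_k(Z(\alpha_1), \ldots, Z(\alpha_k)) \prod_i \dt_{\A}(\alpha_i, Z)$.

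Next I would establish holomorphicity of each such term on the open complement of the real-codimension-one locus in $\stab(\A)$ where some $Z(\alpha_i)$ vanishes or two of the $Z(\alpha_i), Z(\alpha_j)$ become positively proportional (the relevant walls of marginal stability, of which there are finitely many since only classes of length at most $N$ are involved). On each such chamber the DT invariants $\dt_{\A}(\alpha_i, Z)$ are locally constant, the combinatorial coefficients $c(\alpha_1, \ldots, \alpha_k)$ are constant, and the Joyce functions $J_k$ are holomorphic on the corresponding open cell of $(\C^*)^k$. The finite sum is therefore holomorphic on each chamber.

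The main step is to verify continuity across walls; once this is shown, holomorphicity follows from Riemann's removable singularity theorem applied across a real-codimension-one set on which the function is already known to be holomorphic on both sides. Continuity across walls is the precise content of Joyce's original construction \cite{joy}: the sectionally holomorphic functions $J_k$ were designed so that their discontinuities across walls, weighted by the tree coefficients $c(\alpha_1, \ldots, \alpha_k)$ and by products of DT invariants, are exactly cancelled by the Joyce wall-crossing formula for $\dt_{\A}$. The hard part will be checking that this cancellation is compatible with the grading by $\bf s$ and therefore takes place already within the finite sum defining the coefficient of a fixed monomial ${\bf s}^\beta$. This compatibility is ensured by the observation that the weights $\prod_i {\bf s}^{[\alpha_i]_+ - [\alpha_i]_-}$ depend only on the underlying classes $\alpha_i$, not on the stability data $Z$; so Joyce's cancellation pattern partitions according to the $\bf s$-multidegree, and the contributions to a single monomial cancel among themselves. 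This yields continuity, hence holomorphicity, of each coefficient function on all of $\stab(\A)$.
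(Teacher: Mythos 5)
First, a point of reference: this paper does not actually prove the proposition; it is imported verbatim from \cite[Proposition 3.17]{AnnaJ}, whose proof is an adaptation of Joyce's argument in \cite{joy}. Measured against that, your overall strategy is the right one. The finiteness of the coefficient of a fixed monomial ${\bf s}^{\beta}$ is correct and is exactly the remark the paper makes after \eqref{fFunFps}: the constraint $\sum_i([\alpha_i]_+-[\alpha_i]_-)=\beta$ bounds $k$ and each $\len(\alpha_i)$ by $\len(\beta)$, and only classes in $\pm K(\A)_{>0}$ contribute since otherwise $\dt_{\A}(\alpha_i,Z)=0$. Chamber-wise holomorphy plus continuity across real-codimension-one walls, upgraded by a Morera/removable-singularity argument on complex lines, is also the correct mechanism.

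The gap is in the single sentence carrying the whole weight of the argument, which you yourself flag as the hard part. You claim that Joyce's cancellation ``partitions according to the ${\bf s}$-multidegree'' because the weights $\prod_i{\bf s}^{[\alpha_i]_+-[\alpha_i]_-}$ depend only on the classes $\alpha_i$ and not on $Z$. That observation does not give the conclusion, because the cancelling terms do \emph{not} share the same multiset of classes: the discontinuity of $J_k(Z(\alpha_1),\dots,Z(\alpha_k))$ for one decomposition cancels against the discontinuity of $\prod_i\dt_{\A}(\alpha_i,Z)$ for \emph{coarser} decompositions, the wall-crossing formula replacing each $\alpha_i$ by a refinement $\alpha_i=\sum_j\beta_{ij}$. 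Since $\gamma\mapsto[\gamma]_+-[\gamma]_-$ is only subadditive ($\len(\beta+\gamma)\le\len(\beta)+\len(\gamma)$, with strict inequality when positive and negative coordinates cancel), two terms that must cancel could a priori carry different monomials in ${\bf s}$, in which case the coefficient of a fixed ${\bf s}^{\beta}$ would jump even though the full (formal) sum does not. What rescues the argument, and what you need to add, is that $\dt_{\A}(\gamma,Z)\neq 0$ forces $\gamma\in\pm K(\A)_{>0}$, and the wall-crossing formula for an effective class involves only decompositions into effective classes (likewise for anti-effective ones); on such decompositions the weight ${\bf s}^{[\cdot]_+-[\cdot]_-}$ is genuinely multiplicative, so the cancelling contributions do carry the same monomial. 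With that additivity statement supplied, your proof goes through and coincides with the intended one.
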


Let us also recall the notions of Frobenius structure on the fibres of an arbitrary holomorphic vector bundle.

\begin{definition}[Frobenius type structure, {\cite[Definition 5.6]{hert}}] Let $K \to M$ be a holomorphic vector bundle on a complex manifold. A Frobenius type structure on $K$ is given by a collection $(\nabla^r, C, \U, \V, g)$ of \emph{holomorphic} objects with values in $K$, where
\begin{enumerate}
\item[$\bullet$] $\nabla^r$ is a flat connection, 
\item[$\bullet$] $C$ is a Higgs field, i.e.\ a $1$-form with values in endomorphisms such that $C \wedge C = 0$,
\item[$\bullet$] $\U, \V$ are endomorphisms,
\item[$\bullet$] $g$ is a quadratic form,
\end{enumerate} 
such that the conditions
\begin{align}\label{FrobTypeCond1}
\nonumber \nabla^r(C) &=0,\\
\nonumber [C, \U] &= 0,\\
\nonumber \nabla^r(\V) &= 0,\\
\nabla^r(\U) - [C, \V] + C &= 0
\end{align}
hold. Moreover we require that $g$ is covariant constant with respect to $\nabla^r$, and that $C$, $\U$ are symmetric and $\V$ is skew-symmetric with respect to $g$. 
\end{definition}
The function $\dt_{\A}$ determines a Frobenius type structure on a bundle over $\stab(\A)$. The endomorphism $\U$ and Higgs field $C$ are given essentially by the central charge $Z$ and its exterior differential. The endomorphism $\V$ and flat connection $\nabla^r$ are given essentially by the adjoint action of the holomorphic generating function.
\begin{prop}[{\cite[Proposition 3.17]{AnnaJ}}]\label{DTFrobTypeStr} Let $K \to \stab(\A)$ denote the trivial vector bundle with fibre $\C[K(\A)]\pow{\bf s}$. Consider the following (formal power series) holomorphic objects with values in $K$: 
\begin{enumerate}
\item[$\bullet$] a connection   
\begin{equation*}
\nabla^r_{{\bf s}} = d + \sum_{\alpha \neq 0} \ad f^{\alpha}_{\bf s}( Z) x_{\alpha} \frac{d Z(\alpha)}{Z(\alpha)},
\end{equation*}
\item[$\bullet$] a $1$-form with values in endomorphisms
\begin{equation*}
C = - d Z,
\end{equation*}
(acting as $C_X(x_{\alpha}) = X Z(\alpha) x_{\alpha}$ for all holomorphic vector fields $X$)
\item[$\bullet$] endomorphisms
\begin{equation*}
\U = Z
\end{equation*}
(acting as $Z(x_{\alpha}) = Z(\alpha) x_{\alpha}$) and
\begin{equation*}
\V_{\bf s} = \ad \Phi_{\bf s}(Z) = \ad \sum_{\alpha \neq 0} f^{\alpha}_{\bf s}(Z) x_{\alpha}
\end{equation*}
\item[$\bullet$] a quadratic form 
\begin{equation*}
g(x_{\alpha}, x_{\beta}) = \delta_{\alpha \beta}.
\end{equation*}
\end{enumerate} 
Then $(\nabla^r_{\bf s}, C, \U, \V_{\bf s}, g)$ defines a $\C\pow{\bf s}$-linear Frobenius type structure. 
\end{prop}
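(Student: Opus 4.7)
My plan is to separate the Frobenius type conditions into algebraic ones, immediate from the explicit form of the objects, and differential ones, which all reduce to a single master identity for the generating series $\Phi_{\bf s}(Z)$. Since $\U = Z$ and $C = -dZ$ act diagonally on the basis $\{x_\alpha\}$ of $\C[K(\A)]$, the conditions $C\wedge C = 0$, $[C,\U]=0$, and the symmetry of $C,\U$ with respect to $g(x_\alpha, x_\beta) = \delta_{\alpha\beta}$ follow immediately. For the skew-symmetry of $\V_{\bf s} = \ad\Phi_{\bf s}$ I would compute directly using the Poisson bracket structure constants together with $f^{-\alpha}_{\bf s} = f^\alpha_{\bf s}$, which in turn follows from $\dt_{\A}(-\alpha, Z) = \dt_{\A}(\alpha, Z)$ combined with the parity of the $J_k$ under $z_i \mapsto -z_i$. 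Invariance of $g$ under $\nabla^r_{\bf s}$ is the analogous $\ad$-invariance statement.

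To treat the remaining conditions uniformly I would write $\theta_{\bf s} = \sum_{\alpha \neq 0} f^\alpha_{\bf s}(Z)\, x_\alpha\, d\log Z(\alpha)$, so that $\nabla^r_{\bf s} = d + \ad \theta_{\bf s}$ and $\nabla^r_{\bf s}(\V_{\bf s}) = \ad\bigl( d\Phi_{\bf s} + [\theta_{\bf s}, \Phi_{\bf s}] \bigr)$. Vanishing of this expression is equivalent to the master equation
\begin{equation*}
d\Phi_{\bf s}(Z) = [\Phi_{\bf s}(Z),\, \theta_{\bf s}],
\end{equation*}
whose coefficient in the basis $\{x_\gamma\}$ reads
\begin{equation*}
df^\gamma_{\bf s}(Z) = \tfrac{1}{2}\sum_{\alpha+\beta=\gamma} (-1)^{\bra\alpha,\beta\ket}\bra\alpha,\beta\ket\, f^\alpha_{\bf s}(Z)\, f^\beta_{\bf s}(Z)\, d\log\bigl(Z(\beta)/Z(\alpha)\bigr).
\end{equation*}
The remaining conditions $\nabla^r_{\bf s}(C) = 0$, $\nabla^r_{\bf s}(\U) - [C, \V_{\bf s}] + C = 0$, and flatness $d\theta_{\bf s} + \tfrac{1}{2}[\theta_{\bf s}, \theta_{\bf s}] = 0$ all reduce to consequences of this same master identity by expanding in the diagonal basis and using $Z(\alpha)\, d\log Z(\alpha) = dZ(\alpha)$; for instance the term $C$ in $\nabla^r_{\bf s}(\U) - [C,\V_{\bf s}] + C = 0$ absorbs the degenerate $[\theta_{\bf s},\U]$ contribution where $Z(\alpha)$ cancels the pole of $d\log Z(\alpha)$.

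The central step is therefore to prove the master identity starting from the explicit tree formula for $f^\gamma_{\bf s}$. Applying $d$ term-by-term, the derivative of each $J_k$ produces a sum over cuts of a single edge of each oriented tree $T$, splitting $T$ into two subtrees with total weights $\alpha, \beta$ satisfying $\alpha+\beta = \gamma$, with the cut producing the logarithmic factor $d\log\bigl(Z(\beta)/Z(\alpha)\bigr)$. The coefficients $c(\alpha_1,\ldots,\alpha_k)$, built as products of Euler forms over edges of $T$, are arranged precisely so that cutting an edge factorises the product into the two contributions coming from the subtrees. The hardest part, which I expect to be the main obstacle, is the combinatorial bookkeeping: matching oriented tree cuts with ordered pairs $(\alpha,\beta)$, tracking the signs from the twisted group algebra $x_\alpha x_\beta = (-1)^{\bra\alpha,\beta\ket} x_{\alpha+\beta}$, and showing that the edge-factor left on the cut is exactly $(-1)^{\bra\alpha,\beta\ket}\bra\alpha,\beta\ket$. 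Well-definedness of all sums in $\C[K(\A)]\pow{\bf s}$ follows from the observation that modulo $({\bf s})^N$ only finitely many decompositions $\alpha_1+\cdots+\alpha_k=\gamma$ contribute, since the formal parameters ${\bf s}$ track effective length.
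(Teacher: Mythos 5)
Your proposal is correct and follows essentially the same route the paper takes: the paper does not reprove this proposition (it cites \cite[Proposition 3.17]{AnnaJ}) but points to exactly the two inputs you isolate, namely the Joyce PDE for $\Phi_{\bf s}$ — your ``master identity'' $d\Phi_{\bf s} = [\Phi_{\bf s},\theta_{\bf s}]$ is equivalent, after antisymmetrising in $(\alpha,\beta)$, to the PDE $df^{\gamma}_{\bf s} = -\sum_{\gamma=\alpha+\beta}(-1)^{\bra\alpha,\beta\ket}\bra\alpha,\beta\ket f^{\alpha}_{\bf s}f^{\beta}_{\bf s}\,d\log Z(\alpha)$ quoted later in the paper — together with the shift-symmetry $f^{-\gamma}_{\bf s}=f^{\gamma}_{\bf s}$ and the $\cy$ skew-symmetry of the Euler form for the metric conditions. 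The only caveats are minor: the tree-cutting proof of the master identity is deferred to Joyce in the paper as well (so your sketch is acceptable at the same level of detail), and the conditions $\nabla^r_{\bf s}(C)=0$ and $\nabla^r_{\bf s}(\U)-[C,\V_{\bf s}]+C=0$ are in fact purely algebraic cancellations in the diagonal basis (using $Z(\gamma)\,d\log Z(\gamma)=dZ(\gamma)$) and do not need the master identity at all.
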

Note that here we use the Lie algebra structure on $\C[K(\A)]$ just to describe endomorphisms of $K$, i.e.\ we work with a vector bundle not a principal bundle. 

Although we do not reproduce the proof of the Proposition here, we should point out that it follows quite easily from Joyce's work \cite{joy}. In particular we note that 
\begin{enumerate}
\item[$\bullet$] flatness of $\nabla^r_{\bf s}$ follows from a partial differential equation satisfied by $\Phi_{\bf s}(Z)$, which in turn follows easily from results in \cite{joy};
\item[$\bullet$] the covariant constancy of $g$ with respect to $\nabla^r_{\bf s}$ uses the triangulated and $\cy$ properties of $\D$ in an essential way. 
\end{enumerate}
What is needed for the latter condition is the equality
\begin{equation*}
f^{\alpha - \beta}_{\bf s}(Z) \bra \alpha, \beta \ket = - f^{\beta - \alpha}_{\bf s}(Z) \bra \beta, \alpha \ket  
\end{equation*}
for all $\alpha, \beta \in K(\D)$. In our case this follows from 
\begin{equation*}
f^{\alpha - \beta}_{\bf s}(Z) = f^{\beta - \alpha}_{\bf s}(Z)
\end{equation*}
which holds since $[1] \in \Aut(\D)$ preserves semistability, and
\begin{equation*}
\bra \alpha, \beta \ket = - \bra \beta, \alpha\ket
\end{equation*}
which follows from the $\cy$ property.\\

The general notion of a bundle with a Frobenius type structure is of course motivated by the special case of a Frobenius manifold $M$, corresponding to certain structures on the tangent bundle $\tg{M}$. The notion of a Frobenius manifold and its relation to a Frobenius type structure on $\tg{M}$ are briefly recalled at the beginning of Section \ref{pullbackSection} (see Definition \ref{FrobDefinition} and Lemma \ref{HertLem}).

\section{Approximate finite-dimensional Frobenius type structures}\label{projectionSec}   
 Let $K \to M$ be a holomorphic vector bundle with Frobenius type structure $(\nabla^r, C, \U, \V, g)$ on a complex manifold $M$. Write $\tg{M}$ for the holomorphic tangent bundle to $M$. A holomorphic section $\zeta \in H^0(M, K)$ can be contracted with the Higgs field $C$ to give a map
\begin{equation}\label{HiggsDerivative}
- C_{\bullet}(\zeta)\!: \tg{M} \to K
\end{equation} 
i.e.\ minus the derivative of the section $\zeta$ along the Higgs field.

Our constructions in the present Section are motivated by the following result.

\begin{thm}[Hertling {\cite[Theorem 5.12]{hert}}]\label{HertThm} Suppose that $\zeta$ is a global section of $K$ such that 
\begin{enumerate}
\item[$\bullet$] it is a flat section with respect to the flat connection of the Frobenius type structure, $\nabla^r(\zeta) = 0$,
\item[$\bullet$] it is homogeneous with respect to the endomorphism $\V$, i.e.\ we have $\V(\zeta) = \frac{d}{2} \zeta$ for some $d \in \C$,
\item[$\bullet$] the map \eqref{HiggsDerivative} is an isomorphism. 
\end{enumerate}
Then the pullback of $(\nabla^r, C, \U, \V, g)$ along the map \eqref{HiggsDerivative} gives a Frobenius manifold structure on $M$ with unit field given by the pullback of the section $\zeta$ and with conformal dimension $2 - d$ (see Definition \ref{FrobDefinition}). 
\end{thm}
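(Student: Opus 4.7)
The plan is to use the isomorphism $\phi := -C_{\bullet}(\zeta) \colon \tg{M} \to K$ to transport the Frobenius type structure $(\nabla^r, C, \U, \V, g)$ onto the tangent bundle and then verify Dubrovin's axioms. Concretely, I would define on $\tg{M}$ a product $\circ$ characterised by $\phi(X \circ Y) = -C_X \phi(Y)$, the unit vector field $e := \phi^{-1}(\zeta)$, the metric $\eta(X, Y) := g(\phi(X), \phi(Y))$, and the Euler vector field $E := \phi^{-1}(\U(\zeta))$.

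First I would verify the pointwise algebraic data. Since $\phi(X) = -C_X \zeta$, commutativity of $\circ$ is equivalent to $[C_X, C_Y] = 0$, which is the Higgs condition $C \wedge C = 0$. The unit property $e \circ X = X$ follows by a short computation from $C_e \zeta = -\zeta$ together with $C \wedge C = 0$. Associativity amounts to the identity $C_{X \circ Y} = -C_X C_Y$ on $K$; this holds when applied to $\zeta$ by the very definition of $\circ$, and extends to all sections of $K$ thanks to the cyclicity provided by $\phi$ and the commutativity of the Higgs action. The Frobenius identity $\eta(X \circ Y, Z) = \eta(X, Y \circ Z)$ is the transported statement that $C$ is $g$-symmetric.

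The hardest step will be establishing the WDVV/potentiality condition for $(\circ, e, \eta)$. My plan is to first identify the Levi-Civita connection $\nabla^{LC}$ of $\eta$ with the pullback $\phi^{-1} \nabla^r \phi$ on $\tg{M}$: the condition $\nabla^r(g) = 0$ implies that the pulled-back connection preserves $\eta$, while torsion-freeness follows from $\nabla^r(\zeta) = 0$ and $\nabla^r(C) = 0$ together with the definition $\phi(X) = -C_X \zeta$. By uniqueness of the Levi-Civita connection, the two coincide. Given this identification, the WDVV equations are equivalent to the flatness of the pencil of connections $\widetilde{\nabla}^{\lambda} = \nabla^{LC} + \lambda\, \circ$ on $\tg{M}$ for all $\lambda \in \C$, which under $\phi$ transports to the flatness of $\nabla^r + \lambda C$ on $K$. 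The latter decomposes into $(\nabla^r)^2 = 0$, $\nabla^r(C) = 0$ and $C \wedge C = 0$, all of which are part of the Frobenius type axioms.

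Finally, I would verify the Euler vector field axioms. The flatness $\nabla^{LC}(e) = 0$ transports to $\nabla^r(\zeta) = 0$. That multiplication by $E$ corresponds via $\phi$ to the endomorphism $\U$, together with the compatibility $\operatorname{Lie}_E(\circ) = \circ$, will follow from $[C, \U] = 0$ combined with $\nabla^r(\U) - [C, \V] + C = 0$. The conformal weight is extracted from the homogeneity $\V(\zeta) = \tfrac{d}{2}\zeta$ and the skew-symmetry of $\V$ with respect to $g$: these together force $\operatorname{Lie}_E(\eta) = d\, \eta$, which in Dubrovin's convention is exactly conformal dimension $2 - d$. Assembling these verifications produces the desired Frobenius manifold structure on $M$.
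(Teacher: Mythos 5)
The paper offers no proof of this statement: it is quoted directly from Hertling \cite[Theorem 5.12]{hert} and used as a black box. So your write-up is a reconstruction of Hertling's argument rather than a variant of anything in the text. As such, the outline is the standard one and most of it is correct: transporting the structure through $\phi = -C_{\bullet}(\zeta)$; getting commutativity from $C\wedge C=0$, the unit from $C_e\zeta=-\zeta$, and associativity from the identity $C_{X\circ Y}=-C_XC_Y$ on all of $K$ (legitimate, since every section of $K$ is $-C_W\zeta$ for some $W$); identifying $\phi^{-1}\nabla^r\phi$ with the Levi-Civita connection of $\eta$ using $\nabla^r(g)=0$ for metricity and $\nabla^r(C)=0$, $\nabla^r(\zeta)=0$ for torsion-freeness; and deducing potentiality from flatness of the pencil $\nabla^r-\lambda C$. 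The identification $\phi(E\circ X)=\U\phi(X)$ does indeed rest on $[C,\U]=0$ together with $C\wedge C=0$ and $C_E\zeta=-\U\zeta$.

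The step that is wrong as stated is the conformal dimension. The homogeneity $\V(\zeta)=\tfrac d2\zeta$ and the $g$-skew-symmetry of $\V$ do \emph{not} by themselves force $\LL_E(\eta)=d\,\eta$. One must use the axiom $\nabla^r(\U)-[C,\V]+C=0$ applied to the flat section $\zeta$: since $\phi(\nabla_XE)=(\nabla^r_X\U)\zeta=[C_X,\V]\zeta-C_X\zeta=\V\phi(X)+\bigl(1-\tfrac d2\bigr)\phi(X)$, one gets $\nabla_XE=\V^M X+\tfrac{2-d}{2}X$ with $\V^M=\phi^{-1}\V\phi$, and then skew-symmetry of $\V^M$ yields $\LL_E(\eta)=(2-d)\,\eta$. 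With the paper's convention (the conformal dimension \emph{is} the eigenvalue of $\LL_E$ on the metric), your claimed identity $\LL_E(\eta)=d\,\eta$ would give conformal dimension $d$, contradicting the very statement you are proving; the ``$+C$'' term of the axiom, which your sketch omits at this point, is exactly what produces the shift to $2-d$. A smaller omission: to know that $E$ is affine-linear in flat coordinates (if one's definition of Frobenius manifold demands $\nabla\nabla E=0$) you also need $\nabla^r(\V)=0$, which is the remaining Frobenius type axiom.
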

We would like to apply Theorem \ref{HertThm} to the Frobenius type structure on the bundle $K \to U$ given by Proposition \ref{DTFrobTypeStr}, where $U \subset \stab(\A)$ is a suitable open subset. Note that for a fixed holomorphic section $\zeta$ of $K$ the map $- C_{\bullet}(\zeta) = d Z(\zeta)$ is onto the finite rank subbundle defined by
\begin{equation*}
K(\zeta) = \im (d Z (\zeta)) \subset K.
\end{equation*}
It is natural to ask when $\zeta$ is in fact a section of the bundle $K(\zeta)$.
\begin{lem}\label{zetaLem} We have that $\zeta$ is a section of $K(\zeta)$ if and only if there are elements $\alpha_1, \ldots, \alpha_r \in K(\A)$, linearly independent over $\R$, such that
\begin{equation*}
\zeta = \sum^{r}_{i = 1} c_{i}(Z, {\bf s}) x_{\alpha_i} + \sum_{\substack{a_1 + \cdots + a_r = 1\\ a_1\alpha_1 + \cdots + a_r \alpha_r \neq \alpha_i, \, i = 1, \dots, r}} c_{a_1, \cdots, a_r} (Z, {\bf s}) x_{a_1\alpha_1 + \cdots + a_r \alpha_r}
\end{equation*} 
 where $c_{i}(Z, {\bf s})$, $c_{a_1, \cdots, a_n} (Z, {\bf s})$ are formal power series in the variables ${\bf s}$ with holomorphic coefficients.
\end{lem}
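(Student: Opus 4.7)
The plan is to unwind the Higgs-derivative map $-C_\bullet(\zeta)\!: \tg{\stab(\A)} \to K$ described just above the lemma and translate the membership $\zeta \in K(\zeta) = \im(-C_\bullet(\zeta))$ into an affine-hyperplane condition on the support of $\zeta$. Writing $\zeta = \sum_\alpha \zeta_\alpha(Z,{\bf s})\,x_\alpha$ with $\zeta_\alpha \in \mathcal{O}(\stab(\A))\pow{\bf s}$, and identifying a tangent vector $X \in T_Z \stab(\A)$ with the element of $\Hom(K(\A), \C)$ it represents (since $\stab(\A)$ is open in $\Hom(K(\A), \C)$), the identity $-C_\bullet(\zeta) = dZ(\zeta)$ from the paragraph preceding the lemma gives
\begin{equation*}
-C_X(\zeta) = \sum_\alpha \zeta_\alpha(Z,{\bf s})\, X(\alpha)\, x_\alpha.
\end{equation*}
Allowing $X \in \Hom(K(\A), \C\pow{\bf s})$ so that all operations become natively $\C\pow{\bf s}$-linear, the condition $\zeta \in K(\zeta)$ pointwise amounts to the existence of such an $X$ with $\zeta_\alpha(X(\alpha) - 1) = 0$ for every $\alpha$. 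Since $\mathcal{O}(U)\pow{\bf s}$ is an integral domain for any connected $U \subset \stab(\A)$, this forces $X(\alpha) = 1$ for each $\alpha$ in the global support $S = \{\alpha : \zeta_\alpha \not\equiv 0\}$; reading off the constant term in ${\bf s}$ further reduces the problem to finding an ordinary $X \in \Hom(K(\A), \C)$ with $X|_S \equiv 1$.

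For the ``only if'' direction I would extend such an $X$ $\C$-linearly to a functional $L\!: K(\A) \otimes_\Z \C \to \C$ and observe that $S \subset \{L = 1\}$, a complex affine hyperplane. Setting $r = \dim_\R(\R\text{-span}(S))$, I would then pick $\alpha_1, \ldots, \alpha_r \in S$ forming an $\R$-basis of this span (possible because $S$ spans it by definition). Any other $\beta \in S$ admits a unique expansion $\beta = \sum_i a_i \alpha_i$ with $a_i \in \R$, and applying $L$ together with $L(\alpha_i) = 1$ yields $\sum_i a_i = 1$. Separating in $\zeta$ the basis terms $x_{\alpha_i}$ from the rest produces exactly the expression displayed in the lemma. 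Conversely, $\R$-linear independence of the integer vectors $\alpha_1, \ldots, \alpha_r$ implies their $\C$-linear independence in $K(\A) \otimes_\Z \C$ (a $\C$-linear relation among real vectors splits into real and imaginary parts), so the system $X(\alpha_i) = 1$ admits a solution $X \in \Hom(K(\A), \C)$; every $\alpha = \sum_i a_i \alpha_i$ appearing in the displayed $\zeta$ then satisfies $X(\alpha) = \sum_i a_i = 1$, whence $-C_X(\zeta) = \zeta$ and $\zeta \in K(\zeta)$.

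The main obstacle is purely bookkeeping: making sure the reduction from $\C\pow{\bf s}$-module sections to constant-term $\C$-linear functionals loses no information. Concretely, if $X = \sum_k X^{(k)} {\bf s}^k$ satisfies $X(\alpha) = 1$ in $\C\pow{\bf s}$ for $\alpha \in S$, then only the constant-term condition $X^{(0)}(\alpha) = 1$ is restrictive, because higher-order coefficients $X^{(k)}$ can be chosen to vanish without affecting the equation. After this reduction the lemma becomes the classical characterisation of discrete subsets of a lattice lying on a single affine hyperplane not passing through the origin.
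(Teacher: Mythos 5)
Your proof is correct and follows essentially the same route as the paper's: both translate the condition $\zeta \in K(\zeta)$ into the existence of a functional taking the value $1$ on the support of $\zeta$, and then characterise supports lying on an affine hyperplane $\{L=1\}$ by choosing an $\R$-basis $\alpha_1,\ldots,\alpha_r$ from the support and observing that every other support element is a combination with coefficient sum $1$. The only differences are cosmetic — you work with linear functionals on $K(\A)\otimes\C$ where the paper uses the local coordinates $u_i = Z(\alpha_i)$ and a general vector field solving $Xu_i=1$, and you are somewhat more explicit about the integral-domain and $\pow{\bf s}$-coefficient bookkeeping.
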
 
\begin{proof} A holomorphic section $\zeta$ of $K$ takes the form 
\begin{equation*}
\zeta = \sum_{\alpha \in I'} c_{\alpha}(Z, {\bf s}) x_{\alpha}
\end{equation*}
where $I' \subset K(\A)$ and the $c_{\alpha}(Z, {\bf s})$, $\alpha \in I'$ are formal power series which do not vanish identically. Let $X$ be a holomorphic vector field. We compute
\begin{align*}
dZ(X)(\zeta)  &= \sum_{\alpha \in I'} c_{\alpha}(Z, {\bf s}) dZ(X)(x_{\alpha})\\
&= \sum_{\alpha \in I'} c_{\alpha}(Z, {\bf s}) X(Z(\alpha)) x_{\alpha}. 
\end{align*}
So $\zeta \in K(\zeta)$ if and only if there exists a holomorphic vector field $X$ such that for all $\alpha \in I'$ we have
\begin{equation*}
X(Z(\alpha)) = 1.
\end{equation*}
Choose a maximal set of elements $\alpha_1, \ldots, \alpha_r$ of $I'$ which are linearly independent over $\R$. The functions $Z(\alpha_1), \ldots, Z(\alpha_r)$ are part of a local coordinate system $u_1, \ldots, u_n$ on $\stab(\A)$ with $u_i = Z(\alpha_i)$ for $i = 1, \ldots, r$. The general solution $X$ to $X u_i = 1$, $i = 1, \ldots, r$ is a vector field 
\begin{equation*}
X = \sum^r_{i=1}\del_{u_i} + \sum^{n}_{j = r+1} b_{j} \del_{u_j}
\end{equation*}
for arbitrary $b_j$. All the other $\alpha \in I = I' \setminus \{\alpha_1, \ldots, \alpha_r\}$ are linear combinations $\alpha = \sum^r_{i = 1} a_i \alpha_i$. The condition $X(Z(\alpha)) = 1$ holds for $\alpha \in I$ if and only if $\sum^r_{i = 1} a_i = 1$.
\end{proof}
The following result is clear from the proof of Lemma \ref{zetaLem}.
\begin{cor}\label{zetaCor} Suppose $\zeta$ is a section of $K(\zeta)$. The map $- C_{\bullet}(\zeta) = dZ(\zeta)\!: T U \to K(\zeta)$ is injective (and so an isomorphism) if and only if in Lemma \ref{zetaLem} we have $r = n$ and the functions $c_1(Z, {\bf s}), \ldots, c_n(Z, {\bf s})$ are nowhere vanishing on $U$. In this case $K(\zeta) \subset K$ is the subbundle generated by
\begin{align*} 
d Z(\del_{Z(\alpha_i)})(\zeta) = x_{\alpha_i} +  \sum_{\substack{a_1 + \cdots + a_r = 1\\ a_1\alpha_1 + \cdots + a_r \alpha_r \neq \alpha_j, \, j = 1, \dots, n}} a_i c_{a_1, \cdots, a_n} (Z, {\bf s}) x_{a_1\alpha_1 + \cdots + a_n \alpha_n}
\end{align*}
for $i =  1, \ldots, n$ (following the notation of Lemma \ref{zetaLem}). 
\end{cor}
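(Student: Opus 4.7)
The plan is to make the calculation from the proof of Lemma \ref{zetaLem} explicit in local coordinates and then read off both assertions directly. I do not expect a genuine obstacle: the corollary is essentially a coordinate unpacking of the lemma, and the technical points lie only in keeping track of how the multi-index conditions propagate.

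First I would choose, as in the proof of Lemma \ref{zetaLem}, a maximal $\R$-linearly independent subset $\alpha_1, \ldots, \alpha_r$ of the support $I' \subset K(\A)$ of $\zeta$, and extend $Z(\alpha_1), \ldots, Z(\alpha_r)$ to a system of local holomorphic coordinates $u_1, \ldots, u_n$ on an open subset $U \subset \stab(\A)$, with $u_i = Z(\alpha_i)$ for $i \leq r$. Using the identity
\begin{equation*}
dZ(X)(\zeta) = \sum_{\alpha \in I'} c_\alpha(Z, {\bf s})\, X(Z(\alpha))\, x_\alpha
\end{equation*}
established in the proof of the lemma, I would then compute $dZ(\partial_{u_k})(\zeta)$ for each $k = 1, \ldots, n$.

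Next I would split the analysis into the two directions. For $k > r$, every $\alpha \in I'$ is by maximality an $\R$-linear combination of $\alpha_1, \ldots, \alpha_r$ only, so $\partial_{u_k}(Z(\alpha)) = 0$ and hence $dZ(\partial_{u_k})(\zeta) = 0$; thus if $r < n$ the map $-C_\bullet(\zeta) = dZ(\zeta)$ has a nontrivial kernel. Assuming $r = n$, the same formula applied to $X = \partial_{u_i}$ immediately produces the displayed expression for $dZ(\partial_{u_i})(\zeta)$: the $\alpha_i$ contribute the leading term $c_i(Z,{\bf s})\, x_{\alpha_i}$, and each remaining $\alpha = \sum a_j \alpha_j \in I'$ (with $\sum a_j = 1$ and $\alpha \ne \alpha_j$ for all $j$) contributes $a_i c_{a_1, \ldots, a_n}(Z,{\bf s})\, x_\alpha$.

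Finally I would exploit that the classes $\alpha_1, \ldots, \alpha_n$ are distinct (being $\R$-linearly independent), so from the explicit expression above the element $x_{\alpha_i}$ appears only in $dZ(\partial_{u_i})(\zeta)$, and always with coefficient $c_i(Z,{\bf s})$. Extracting the coefficient of $x_{\alpha_i}$ from any vanishing linear combination $\sum_i b_i(Z,{\bf s})\, dZ(\partial_{u_i})(\zeta) = 0$ yields $b_i c_i = 0$ for each $i$. Since $c_i$ is nowhere vanishing on $U$ exactly when its constant term in ${\bf s}$ is nonzero on $U$, in which case $c_i$ is a unit in $\mathcal{O}(U)\pow{{\bf s}}$, injectivity on $U$ is equivalent to the nowhere vanishing of every $c_i$. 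As $TU$ and the finite-rank subbundle $K(\zeta) = \im dZ(\zeta) \subset K$ then both have rank $n$, injectivity upgrades to an isomorphism, and the explicit generators of $K(\zeta)$ are precisely the displayed $dZ(\partial_{u_i})(\zeta)$ by construction.
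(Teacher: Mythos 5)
Your proposal is correct and is precisely the argument the paper intends: the paper merely declares the corollary ``clear from the proof of Lemma \ref{zetaLem}'', and you have unpacked that computation in the coordinates $u_i = Z(\alpha_i)$, including the kernel argument ruling out $r < n$ and the extraction of the $x_{\alpha_i}$-coefficient $b_i c_i$. Two harmless remarks: your leading term $c_i(Z,{\bf s})\,x_{\alpha_i}$ is the literal computation (the paper's display drops the unit $c_i$, which does not change the generated subbundle), and, like the paper, you assert rather than fully verify that vanishing of some $c_i$ at a point forces non-injectivity there --- this is immediate for sections of the form $\zeta = \sum_i c_i x_{\alpha_i}$, which is the only case used later.
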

We will always assume that we are in the situation of Corollary \ref{zetaCor}. Thus $\zeta$ is a section of the bundle $K(\zeta)$ and the natural map $-C_{\bullet}(\zeta)\!: T U \to K(\zeta)$ is an isomorphism, so we can contemplate applying Theorem \ref{HertThm}. 

\begin{definition} Let us denote by $\pi^{\zeta}\!: K \to K(\zeta)$ the orthogonal projection onto $K(\zeta)$ with respect to the quadratic form $g$ of Proposition \ref{DTFrobTypeStr}. Note that $K$ is infinite-dimensional, but we will only apply $\pi^{\zeta}$ to sections of $K$ for which it is well-defined. We write  $\nabla^{r, \zeta}_{\bf s}, C^{\zeta}, \U^{\zeta}, \V^{\zeta}_{\bf s}, g^{\zeta}$ for the connection, endomorphisms and quadratic form given respectively by
\begin{equation*}
\pi^{\zeta} \circ \nabla^{r}_{\bf s}|_{K(\zeta)}, \pi^{\zeta} \circ C|_{K(\zeta)}, \pi^{\zeta} \circ \U|_{K(\zeta)}, \pi^{\zeta} \circ\V_{\bf s}|_{K(\zeta)}, g|_{K(\zeta)}.
\end{equation*}
\end{definition}
The holomorphic data
\begin{equation*}
(K(\zeta), \nabla^{r, \zeta}_{\bf s}, C^{\zeta}, \U^{\zeta}, \V^{\zeta}_{\bf s}, g^{\zeta})
\end{equation*}
give a formal family of structures on $K(\zeta)$, parametrised by ${\bf s}$. 

This is not in general a family of Frobenius type structures, and moreover $\zeta$ is not in general a flat or $\V_{\bf s}$-homogeneous section of $K(\zeta)$: the Frobenius type, $\zeta$-flatness and conformal conditions do not hold modulo terms in ${\bf s}$ of arbitrarily high degree.

However one can still ask whether this formal family \emph{osculates} a family of Frobenius type structures on $K(\zeta)$ to some order. More precisely we ask whether the Frobenius type, $\zeta$-flatness and conformal conditions hold modulo some power $({\bf s})^p$ of the ideal $({\bf s}) = (s_1, \cdots, s_n)$ with $p \geq 3$, i.e. modulo terms which are at least cubic. 

In the rest of this section we study this problem. Even if it makes sense more generally we will restrict to the case when the bundle $K(\zeta)$ is preserved by the Higgs field and the endomorphism $\U$. This condition is clarified by the following result. 
 
\begin{lem}\label{HiggsInvariance} Let $\zeta$ be a holomorphic section of $K$ (we do not assume a priori that $\zeta$ is a section of $K(\zeta)$). The following are equivalent:
\begin{enumerate}
\item[$\bullet$] $K(\zeta)$ is preserved by the Higgs field $C = - d Z$,
\item[$\bullet$] $K(\zeta)$ is preserved by the endomorphism $\U = Z$,
\item[$\bullet$] the section $\zeta$ has the form 
\begin{equation*}
\zeta = \sum^r_{i = 1} c_i (Z, {\bf s }) x_{\alpha_i}
\end{equation*}
where $\alpha_1, \ldots, \alpha_r \in K(\A)$ are linearly independent over $\R$.    
\end{enumerate}
\end{lem}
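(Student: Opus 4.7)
The plan is to prove $(3)\Rightarrow(1),(2)$ by direct computation that $K(\zeta)$ is spanned by joint eigenvectors of $\U$ and of the components of $C$, and the converse implications by extracting a rigid linear-algebra constraint from the preservation requirement.

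For the forward direction assume (3), so $\zeta = \sum_{i=1}^r c_i(Z,{\bf s}) x_{\alpha_i}$ with the $\alpha_i$ linearly independent over $\R$ in $K(\A)$. Then
\begin{equation*}
-C_X(\zeta) = \sum_{i=1}^r c_i(Z,{\bf s}) X(Z(\alpha_i)) x_{\alpha_i}.
\end{equation*}
Since $\alpha_1,\ldots,\alpha_r$ are linearly independent in $K(\A)$, the functions $Z(\alpha_1),\ldots,Z(\alpha_r)$ form part of a local coordinate system on $\stab(\A)$, so the tuple $(X(Z(\alpha_i)))_i$ takes arbitrary complex values at each point as $X$ varies. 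Thus at points where all $c_i$ are nonzero, the fibre $K(\zeta)_Z$ equals $\operatorname{span}_{\C}\{x_{\alpha_1},\ldots,x_{\alpha_r}\}$. Each $x_{\alpha_i}$ is an eigenvector of $\U = Z$ with eigenvalue $Z(\alpha_i)$ and of $C_X = -dZ(X)$ with eigenvalue $-X(Z(\alpha_i))$, so both endomorphisms preserve $K(\zeta)$.

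Conversely, assume (2); the argument assuming (1) is identical after replacing $Z(\alpha)$ by $X(Z(\alpha))$ throughout and quantifying over arbitrary $X$. Write $\zeta = \sum_{\alpha \in I'} c_\alpha(Z,{\bf s}) x_\alpha$ with $I' \subset K(\A)\setminus\{0\}$ the set of $\alpha$ for which $c_\alpha\not\equiv 0$. Pick a maximal $\R$-linearly independent subset $\alpha_1,\ldots,\alpha_r$ of $I'$ and expand each $\alpha\in I'$ as $\alpha = \sum_{j=1}^r a_j^{\alpha}\alpha_j$ with $a_j^{\alpha}\in\Q$. Using $u_i = Z(\alpha_i)$ as part of a local coordinate system, $K(\zeta)$ is generated by the sections $\xi_i := -C_{\partial_{u_i}}(\zeta) = \sum_{\alpha\in I'} c_\alpha a_i^{\alpha} x_\alpha$ for $i=1,\ldots,r$. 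The requirement $\U(\xi_i)\in K(\zeta)$ produces functions $\mu_k(Z,{\bf s})$ satisfying $\sum_\alpha c_\alpha a_i^{\alpha} Z(\alpha) x_\alpha = \sum_k \mu_k \xi_k$. Comparing coefficients of $x_{\alpha_\ell}$ forces $\mu_\ell = \delta_{i\ell}Z(\alpha_i)$; comparing coefficients of $x_\alpha$ for general $\alpha\in I'$ and cancelling the non-vanishing $c_\alpha$ yields the rigid constraint
\begin{equation*}
a_i^{\alpha}\bigl(Z(\alpha)-Z(\alpha_i)\bigr) = 0.
\end{equation*}
Since $a_i^{\alpha}$ is a rational constant and $Z(\alpha)-Z(\alpha_i)$ is a linear function on $\stab(\A)$ vanishing identically only when $\alpha = \alpha_i$ in $K(\A)$, this forces $a_i^{\alpha}=0$ or $\alpha=\alpha_i$. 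Varying $i$, any $\alpha \in I'\setminus\{\alpha_1,\ldots,\alpha_r\}$ would satisfy $a_i^{\alpha}=0$ for all $i$, hence $\alpha=0$, contradicting $0\notin I'$. Therefore $I' = \{\alpha_1,\ldots,\alpha_r\}$ and $\zeta$ takes the form in (3).

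The main delicate point is the passage from invariance to the constraint $a_i^{\alpha}(Z(\alpha)-Z(\alpha_i))=0$: one must separate the formal ${\bf s}$-dependence of each $c_\alpha$ (non-vanishing as a power series, so it cancels) from the geometric $Z$-dependence, and then use that $Z(\alpha)-Z(\alpha_i)$ vanishes identically on $\stab(\A)$ only when $\alpha=\alpha_i$.
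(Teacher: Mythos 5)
Your proof is correct and follows essentially the same route as the paper: express $K(\zeta)$ via the generators $dZ(\partial_{u_i})(\zeta)$, translate preservation into the existence of coefficient functions, and compare coefficients of the $x_\alpha$ to force every $\alpha$ in the support of $\zeta$ to equal some $\alpha_i$. The only cosmetic difference is that you carry out the details for $\U$ (where the constraint is linear) and delegate the Higgs-field case to the analogous computation, while the paper does the reverse.
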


\begin{proof} Suppose $K(\zeta)$ is preserved by $C$. Let us write
\begin{equation*}
\zeta = \sum_{\alpha \in I} c_{\alpha}(Z, {\bf s}) x_{\alpha}
\end{equation*}
where $I \subset K(\A)$ and the $c_{\alpha}(Z, {\bf s})$, $\alpha \in I$ are formal power series which do not vanish identically. Then by construction sections of the bundle $K(\zeta)$ have the form
\begin{equation*} 
d Z(X)(\zeta) = \sum_{\alpha \in I} c_{\alpha}(Z, {\bf s}) X(Z(\alpha)) x_{\alpha}
\end{equation*}
as $X$ varies in the space of holomorphic vector fields on $U$. In order to simplify the notation we set $\zeta_X = d Z(X)(\zeta)$. Acting with the Higgs field $C = -dZ$ contracted with a holomorphic field $Y$ we find  
\begin{equation*}
C_Y \zeta_X = - \sum_{\alpha \in I} c_{\alpha}(Z, {\bf s}) X(Z(\alpha)) Y(Z(\alpha)) x_{\alpha}.
\end{equation*}
So $C_Y \zeta_X$ is a section of $K(\zeta)$ if and only if there exists a holomorphic field $W = W(X, Y)$ such that for all $\alpha \in I$ we have
\begin{equation}\label{quadraticVectorField}
W(X, Y)(Z(\alpha)) = - X(Z(\alpha)) Y(Z(\alpha)).
\end{equation} 
Let $\alpha_1, \ldots, \alpha_r$ denote a maximal set of $\R$-linearly independent elements of $I$. Suppose there is a nontrivial $\alpha \in I \setminus \{\alpha_1, \ldots, \alpha_r\}$. Decomposing $\alpha = a_1 \alpha_1 + \cdots + a_r \alpha_r$ we find
\begin{align}\label{quadratic1}
\nonumber W(X, Y)(Z(\alpha)) &= \sum^r_{i = 1}a_i W(X, Y)(Z(\alpha_i))\\
&= - \sum^r_{i = 1}a_i X(Z(\alpha_i)) Y(Z(\alpha_i))
\end{align}
where the second equality follows from applying \eqref{quadraticVectorField} to each $\alpha_i$. On the other hand applying  \eqref{quadraticVectorField} to $\alpha$ gives
\begin{equation}\label{quadratic2}
W(X, Y)(Z(\alpha)) = - \sum^r_{i, j = 1} a_i a_j X(Z(\alpha_i)) Y(Z(\alpha_j)).
\end{equation}
By \eqref{quadratic1} for all $k \neq l$ we have
\begin{equation*}
W(\del_{Z(\alpha_k)}, \del_{Z(\alpha_l)})(Z(\alpha)) = 0.
\end{equation*}
On the other hand \eqref{quadratic2} gives for all $k \neq l$
\begin{equation*}
W(\del_{Z(\alpha_k)}, \del_{Z(\alpha_l)})(Z(\alpha)) = - a_k a_l.
\end{equation*}
It follows that $a_k$ or $a_l$ vanish for all $k \neq l$, i.e. $\alpha$ must be a multiple of one of $\alpha_1, \ldots, \alpha_r$. By \eqref{quadratic1} for all $k$ we have
\begin{equation*}
W(\del_{Z(\alpha_k)}, \del_{Z(\alpha_k)})(Z(\alpha)) = -a_k.
\end{equation*}
On the other hand \eqref{quadratic2} gives for all $k$
\begin{equation*}
W(\del_{Z(\alpha_k)}, \del_{Z(\alpha_k)})(Z(\alpha)) = - a^2_k.
\end{equation*}
It follows that we must have $a_k = 0$ or $a_k = 1$ for all $k$. Since we already know that at most one $a_k$ does not vanish we see that $\alpha$ must be one of $\alpha_1, \ldots, \alpha_r$, a contradiction. Thus the section $\zeta$ must take the form
\begin{equation*}
\zeta = \sum^r_{i = 1} c_i (Z, {\bf s }) x_{\alpha_i}
\end{equation*}
where $\alpha_1, \ldots, \alpha_r \in K(\A)$ are linearly independent over $\R$. 

Conversely a straightforward computation shows that for a section $\zeta$ of this form and arbitrary fields $X, Y$ we can find a vector field $W(X, Y)$ as above, so $K(\zeta)$ is preserved by $C$. 

The argument for the endomorphism $\U$ is almost identical and we leave it to the reader.
\end{proof}
In the following we always assume that the bundle $K(\zeta)$ is preserved by the Higgs field $C$ and the endomorphism $\U$, and that the map $-C_{\bullet}(\zeta)\!: T U \to K(\zeta)$ is an isomorphism. According to Corollary \ref{zetaCor} and Lemma \ref{HiggsInvariance} this holds precisely when the section $\zeta$ takes the form
\begin{equation}\label{specialZeta}
\zeta = \sum^n_{i = 1} c_i (Z, {\bf s }) x_{\alpha_i}
\end{equation}
where $\alpha_1, \ldots, \alpha_n \in K(\A)$ are a basis over $\R$ and the functions $c_i (Z, {\bf s })$ are nowhere vanishing on $U$. Our family of structures on $K(\zeta)$ is then given by 
\begin{equation*}
(\nabla^{r, \zeta}_{\bf s}, C|_{K(\zeta)}, \U|_{K(\zeta)}, \V^{\zeta}_{\bf s}, g|_{K(\zeta)}).
\end{equation*}
\begin{lem}\label{nablarzetaFlat} Pick a section $\zeta$ of the form \eqref{specialZeta} (so $\zeta$ is a section of $K(\zeta)$ and the latter is preserved by $C$ and $\U$). Fix $i, j = 1, \ldots n$, and $p \geq 3$. Suppose one of the following alternatives holds: we have
\begin{equation*}
\len(\alpha_j - \alpha_i) \geq p,
\end{equation*}
or
\begin{enumerate}
\item[]\emph{quadratic condition:} for all $k \neq i, j$ we have
\begin{equation}\label{quadraticCondition}
\bra \alpha_j, \alpha_i\ket\bra \alpha_j - \alpha_k, \alpha_k - \alpha_i\ket = \bra \alpha_j, \alpha_k\ket \bra \alpha_k, \alpha_i\ket,
\end{equation}

\item[]\emph{vanishing condition:} for all nontrivial decompositions $\alpha_j - \alpha_i = \beta + \gamma$ with $\beta, \gamma$ not equal to $\alpha_j - \alpha_k$, $\alpha_k - \alpha_i$ the product 
\begin{equation}\label{vanishingCondition}
\bra \beta, \gamma\ket  f ^{\beta}_{\bf s}(Z)  f ^{\gamma}_{\bf s}(Z)
\end{equation}
lies in $({\bf s})^p$. 
\end{enumerate}
\noindent Then the curvature component $g(x_{\alpha_j}, F(\nabla^{r, \zeta}_{\bf s}) x_{\alpha_i})$ vanishes modulo terms which are of order at least $p$ in $\bf s$.
\end{lem}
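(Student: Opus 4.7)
The plan is to compute $F(\nabla^{r,\zeta}_{\bf s})$ directly, exploiting flatness of the ambient connection $\nabla^r_{\bf s}$. Writing $\nabla^r_{\bf s} = d + A_{\bf s}$ with
\begin{equation*}
(A_{\bf s})^\beta_\alpha = (-1)^{\bra\beta,\alpha\ket}\bra\beta,\alpha\ket\, f^{\beta-\alpha}_{\bf s}(Z)\,\frac{dZ(\beta-\alpha)}{Z(\beta-\alpha)},
\end{equation*}
the basis vectors $x_\gamma$ are constant sections of the trivial bundle $K$ and $\pi^\zeta$ is independent of $Z$, so $d$ commutes with $\pi^\zeta$. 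Together with the flat identity $dA_{\bf s} = -A_{\bf s} \wedge A_{\bf s}$ this gives
\begin{equation*}
g(x_{\alpha_j}, F(\nabla^{r,\zeta}_{\bf s}) x_{\alpha_i}) = -\sum_{\beta \in K(\A) \setminus \{\alpha_1, \ldots, \alpha_n\}} (A_{\bf s})^{\alpha_j}_\beta \wedge (A_{\bf s})^\beta_{\alpha_i}.
\end{equation*}
Setting $\beta = \alpha_i + \beta_1$, this becomes a sum over ordered \emph{bad} decompositions $\alpha_j - \alpha_i = \beta_1 + \beta_2$, i.e.\ those with $\beta_1 \notin \{\alpha_k - \alpha_i : k = 1, \ldots, n\}$. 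The first bullet is then immediate, since one reads off $f^\gamma_{\bf s} \in ({\bf s})^{\len(\gamma)}$ from \eqref{fFunFps} (each factor ${\bf s}^{[\alpha_i]_+-[\alpha_i]_-}$ has degree $\len(\alpha_i)$, and $\sum_i \len(\alpha_i) \geq \len(\sum_i \alpha_i)$), so each bad summand lies in $({\bf s})^{\len(\beta_1) + \len(\beta_2)} \subset ({\bf s})^{\len(\alpha_j - \alpha_i)} \subset ({\bf s})^p$.

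For the second bullet I would partition the bad decompositions into three disjoint classes. Diagonal terms $\beta_1 = \beta_2$ vanish by antisymmetry of $\wedge$. The \emph{singly bad} decompositions are those of the form $(\beta_1, \beta_2) = (\alpha_j - \alpha_k, \alpha_k - \alpha_i)$ for some $k \neq i, j$; their swap $(\beta_2, \beta_1)$ is good and hence absent from the sum. Setting $a = \bra\alpha_j, \alpha_i\ket$, $b = \bra\alpha_j, \alpha_k\ket$, $c = \bra\alpha_k, \alpha_i\ket$, the unsigned coefficient of such a term expands as $(a-b)(a-c) = a^2 - ab - ac + bc$, and \eqref{quadraticCondition} rearranges to $a^2 = ab + ac - bc$, so the coefficient vanishes identically. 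All remaining decompositions are \emph{doubly bad} and come in swap-symmetric pairs $(\beta_1, \beta_2), (\beta_2, \beta_1)$. Using antisymmetry of $\wedge$, skew-symmetry of $\bra-, -\ket$, and the identity $\bra\beta_1, \beta_2\ket = -\bra\alpha_j, \beta_1\ket - \bra\beta_1, \alpha_i\ket$ coming from $\beta_1 + \beta_2 = \alpha_j - \alpha_i$, the sum over each such pair collapses to a constant multiple of
\begin{equation*}
\bra\alpha_j, \alpha_i\ket\, \bra\beta_1, \beta_2\ket\, f^{\beta_1}_{\bf s}(Z)\, f^{\beta_2}_{\bf s}(Z)\, \frac{dZ(\beta_2)}{Z(\beta_2)} \wedge \frac{dZ(\beta_1)}{Z(\beta_1)},
\end{equation*}
which \eqref{vanishingCondition} places in $({\bf s})^p$.

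The main obstacle will be the sign and Euler-pairing bookkeeping in these last two steps. The algebraic cancellation $(a-b)(a-c) = 0$ under \eqref{quadraticCondition} is clean, but the collapse of the doubly bad paired sum is a multi-step Euler-form manipulation that depends essentially on the $\cy$ skew-symmetry of $\bra \cdot, \cdot \ket$ and on careful tracking of the signs $(-1)^{\bra \cdot, \cdot \ket}$.
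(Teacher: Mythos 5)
Your argument is correct, and it rests on the same mechanism as the paper's proof --- flatness of the ambient infinite-dimensional connection, which is exactly the Joyce PDE in matrix form --- but you organise the terms differently, and both hypotheses enter through different algebraic identities than in the paper. The paper substitutes the Joyce PDE for $d f^{\alpha_j-\alpha_i}_{\bf s}$ into $dA_{ji}$, groups the two orderings of each distinguished decomposition $(\alpha_j-\alpha_k,\alpha_k-\alpha_i)$ via the identity $\big(d\log Z(\alpha_j-\alpha_k)-d\log Z(\alpha_k-\alpha_i)\big)\wedge d\log Z(\alpha_j-\alpha_i)=d\log Z(\alpha_j-\alpha_k)\wedge d\log Z(\alpha_k-\alpha_i)$, and cancels them against $A\wedge A$ of the projected connection using \eqref{quadraticCondition}; the remaining decompositions carry the coefficient $\bra\beta',\gamma'\ket f^{\beta'}_{\bf s}f^{\gamma'}_{\bf s}$ directly from the PDE, so \eqref{vanishingCondition} applies term by term with no further work. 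In your version the terms with intermediate class $\beta=\alpha_k$ cancel for free against $A^\zeta\wedge A^\zeta$, but the price is paid elsewhere: the swapped distinguished orderings survive with coefficient $(a-b)(a-c)$, which you correctly observe is exactly what \eqref{quadraticCondition} kills (note $\bra\alpha_j-\alpha_k,\alpha_k-\alpha_i\ket=b-a+c$, so the condition reads $a^2-ab-ac+bc=0$); and the genuinely bad terms carry $\bra\alpha_j,\beta\ket\bra\beta,\alpha_i\ket$ rather than $\bra\beta_1,\beta_2\ket$, forcing the swap-pairing collapse, which I have checked: the two sign prefactors $(-1)^{\bra\alpha_j,\beta\ket+\bra\beta,\alpha_i\ket}$ agree for a pair and the bracket difference equals $\bra\alpha_j,\alpha_i\ket\bra\beta_1,\beta_2\ket$. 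Your treatment of the diagonal terms and of the case $\len(\alpha_j-\alpha_i)\geq p$ is also fine. The net effect is a clean alternative bookkeeping in which the quadratic condition appears as the factorised identity $(a-b)(a-c)=0$ rather than as a matching of coefficients of $d\log\wedge d\log$ terms; the paper's grouping avoids your pairing lemma because Joyce's PDE already presents the bad terms in the form required by \eqref{vanishingCondition}.
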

\begin{proof} Under the assumptions the bundle $K(\zeta)$ is the subbundle generated by the sections $x_{\alpha_1}, \ldots, x_{\alpha_n}$. Let us write the connection $\nabla^{r, \zeta}_{\bf s}$ with respect to this local trivialisation. We compute
\begin{align*}
\nabla^{r, \zeta}_{\bf s}(x_{\alpha_i}) &=  \sum_{\alpha \neq 0} \pi^{\zeta}\Big( f ^{\alpha}_{\bf s}(Z) (-1)^{\bra \alpha, \alpha_i\ket} \bra \alpha, \alpha_i\ket x_{\alpha + \alpha_i} \Big)d\log Z(\alpha)\\
&= \sum^{n}_{j = 1} (-1)^{\bra \alpha_j, \alpha_i\ket} \bra \alpha_j, \alpha_i\ket  f ^{\alpha_j - \alpha_i}_{\bf s}(Z) x_{\alpha_j} d\log Z(\alpha_j - \alpha_i).  
\end{align*}
So the connection matrix of $1$-forms $A$ in this local trivialisation is given by 
\begin{equation}
A_{j i} = (-1)^{\bra \alpha_j, \alpha_i\ket} \bra \alpha_j, \alpha_i\ket  f ^{\alpha_j - \alpha_i}_{\bf s}(Z) d\log Z(\alpha_j - \alpha_i)    \label{Aij}
\end{equation} 
and the curvature form $dA + A \wedge A$ is the matrix of $1$-forms 
\begin{align*}
&(-1)^{\bra \alpha_j, \alpha_i\ket} \bra \alpha_j, \alpha_i\ket d f ^{\alpha_j - \alpha_i}_{\bf s}(Z) \wedge d\log Z(\alpha_j - \alpha_i)\\ 
&+ \sum^n_{k = 1} (-1)^{\bra \alpha_j, \alpha_k\ket + \bra \alpha_k, \alpha_i\ket} \bra \alpha_j, \alpha_k\ket \bra \alpha_k, \alpha_i\ket f ^{\alpha_j - \alpha_k}_{\bf s}(Z)  f ^{\alpha_k - \alpha_i}_{\bf s}(Z)\\&d\log Z(\alpha_j - \alpha_k) \wedge d\log Z(\alpha_k - \alpha_i). 
\end{align*}
We see from this expression that if $\len(\alpha_j - \alpha_i) \geq p$ then the component $(dA  + A \wedge A)_{ji}$ vanishes modulo $({\bf s})^p$. In general, flatness of the connection $\nabla^r_{\bf s}$ on $K$ is expressed by the Joyce PDE \cite{joy}
\begin{equation*}
d  f ^{\alpha}_{\bf s}(Z) = - \sum_{\alpha = \beta + \gamma} (-1)^{\bra \beta, \gamma\ket} \bra \beta, \gamma\ket  f ^{\beta}_{\bf s}(Z)  f ^{\gamma}_{\bf s}(Z) d\log Z(\beta)
\end{equation*}
for all $\alpha \neq 0$ (summing over decompositions with $\beta, \gamma \neq 0$). In our case we choose $\alpha = \alpha_j - \alpha_i$ and write the PDE in the form
\begin{align*}
d f ^{\alpha_j - \alpha_i}_{\bf s}(Z) &= - \sum_{k \neq i, j} (-1)^{\bra \alpha_j - \alpha_k, \alpha_k - \alpha_i\ket} \bra \alpha_j - \alpha_k, \alpha_k - \alpha_i\ket  f ^{\alpha_j - \alpha_k}_{\bf s}(Z) f ^{\alpha_k - \alpha_i}_{\bf s}(Z)\\
&\Big( d\log Z(\alpha_j - \alpha_k) - d\log Z(\alpha_k - \alpha_i)\Big)\\
&- \sum_{\alpha_j - \alpha_i = \beta' + \gamma'} (-1)^{\bra \beta', \gamma'\ket} \bra \beta', \gamma'\ket  f ^{\beta'}_{\bf s}(Z)  f ^{\gamma'}_{\bf s}(Z) d\log Z(\beta')  
\end{align*}
where in the last term we sum over decompositions with $\beta', \gamma'$ not equal to $\alpha_j - \alpha_k, \alpha_k - \alpha_i$ for $k \neq i, j$. Note that we have
\begin{align*}
&\Big( d\log Z(\alpha_j - \alpha_k) - d\log Z(\alpha_k - \alpha_i)\Big) \wedge d\log Z(\alpha_j - \alpha_i)\\
&= d\log Z(\alpha_j - \alpha_k) \wedge d\log Z(\alpha_k - \alpha_i).
\end{align*}
It follows that 
\begin{align*}
&(-1)^{\bra \alpha_j, \alpha_i\ket} \bra \alpha_j, \alpha_i\ket d f ^{\alpha_j - \alpha_i}_{\bf s}(Z) \wedge d\log Z(\alpha_j - \alpha_i)\\
&= - \sum_{k \neq i, j} (-1)^{\bra \alpha_j, \alpha_i\ket + \bra \alpha_j - \alpha_k, \alpha_k - \alpha_i\ket} \bra \alpha_j, \alpha_i\ket\bra \alpha_j - \alpha_k, \alpha_k - \alpha_i\ket \\ 
& f ^{\alpha_j - \alpha_k}_{\bf s}(Z) f ^{\alpha_k - \alpha_i}_{\bf s}(Z) d\log Z(\alpha_j - \alpha_k) \wedge d\log Z(\alpha_k - \alpha_i)\\
&- \sum_{\alpha_j - \alpha_i = \beta' + \gamma'} (-1)^{\bra \alpha_j, \alpha_i\ket+ \bra \beta', \gamma'\ket} \bra \alpha_j, \alpha_i\ket \bra \beta', \gamma'\ket  f ^{\beta'}_{\bf s}(Z)  f ^{\gamma'}_{\bf s}(Z)\\
& d\log Z(\beta')\wedge d\log Z(\alpha_j - \alpha_i). 
\end{align*}
where in the last term we sum over decompositions with $\beta', \gamma'$ not equal to $\alpha_j - \alpha_k, \alpha_k - \alpha_i$ for $k = 1, \ldots, n$.
Thus if we have
\begin{align*}
&(-1)^{\bra \alpha_j, \alpha_i\ket + \bra \alpha_j - \alpha_k, \alpha_k - \alpha_i\ket} \bra \alpha_j, \alpha_i\ket\bra \alpha_j - \alpha_k, \alpha_k - \alpha_i\ket \\ &= (-1)^{\bra \alpha_j, \alpha_k\ket + \bra \alpha_k, \alpha_i\ket} \bra \alpha_j, \alpha_k\ket \bra \alpha_k, \alpha_i\ket
\end{align*}
for $k \neq i, j$ and $\bra \beta', \gamma'\ket  f ^{\beta'}_{\bf s}(Z)  f ^{\gamma'}_{\bf s}(Z)$ lies in $({\bf s})^p$ then the $x_j$ component of $F(\nabla^{r, \zeta}_{\bf s})(x_{\alpha_i})$ vanishes modulo terms which are of order at least $p$ in $\bf s$. 
\end{proof}
\begin{rmk} We can choose the exponent $p = p_{ij}$ as a function of $i, j$ (so we get different vanishing conditions of the various curvature components). Note also that the quadratic condition \eqref{quadraticCondition} involves only our choice of basis $\alpha_i$ and the Euler form, while the vanishing condition \eqref{vanishingCondition} involves the invariants $\dt_{\A}$ through the holomorphic generating functions.
\end{rmk}
The quadratic equations appearing in Lemma \ref{nablarzetaFlat} can be rephrased as follows. 

\begin{lem} Fix $p \geq 3$. Let $\alpha_i$ be a basis of $K(\A)\otimes \R$. We denote by $\epsilon_{ij}$ a skew-symmetric tensor with $\epsilon_{i j} = \pm 1$, and by $\lambda$ a fixed arbitrary constant. Then we have the following equivalence: for all pairwise distinct $i, j, k$
\begin{equation*}
\len(\alpha_j - \alpha_i) < p \Rightarrow \bra \alpha_j, \alpha_i\ket\bra \alpha_j - \alpha_k, \alpha_k - \alpha_i\ket = \bra \alpha_j, \alpha_k\ket \bra \alpha_k, \alpha_i\ket
\end{equation*}
if and only if  
\begin{equation*}
\len(\alpha_j - \alpha_i) < p \Rightarrow \begin{cases}
\bra \alpha_i, \alpha_j \ket = \epsilon_{i j} \lambda \quad \text{and}\\
1 + \epsilon_{i j} \epsilon_{j k} + \epsilon_{j i} \epsilon_{i k} + \epsilon_{i k}\epsilon_{k j} = 0. \end{cases}
\end{equation*}
A particular solution is given by choosing $\epsilon_{i j} = -1$ for all $i < j$ such that $\len(\alpha_j - \alpha_i) < p$.
\end{lem}
\begin{proof} We set $x_{i j} = \bra \alpha_i, \alpha_j\ket$, so $x_{ij} = - x_{ji}$. The quadratic equations hold if and only if 
\begin{equation}\label{quadratic}
x^2_{ij} + x_{ij} x_{jk} + x_{ji} x_{ik} + x_{ik} x_{kj} = 0
\end{equation}
for all pairwise distinct $i, j, k$. Cyclically permuting $i \to j \to k$ in \eqref{quadratic} and subtracting from \eqref{quadratic} gives 
\begin{equation*}
x^2_{ij} - x^2_{jk} = 0
\end{equation*}
for all pairwise distinct $i, j, k$, so we must have $x_{ij} = \epsilon_{ij} \lambda$ for a skew-symmetric tensor $\epsilon_{ij}$ and a fixed, arbitrary constant $\lambda$. Plugging this into \eqref{quadratic} turns it into
\begin{equation}\label{quadraticEps}
1 + \epsilon_{i j} \epsilon_{j k} + \epsilon_{j i} \epsilon_{i k} + \epsilon_{i k}\epsilon_{k j} = 0.
\end{equation}
Direct computation shows that a skew-symmetric index $\epsilon_{ij}$ with $\epsilon_{ij} = -1$ for all $i < j$ is a solution.
\end{proof}
\begin{rmk} We may regard \eqref{quadraticEps} abstractly as a system of quadratic constraints on a skew-symmetric tensor $\epsilon_{ij} = \pm 1$, without reference to a basis $\alpha_i$ for $K(\A)\otimes \R$. Many other solutions are possible, e.g. when $\operatorname{rank}(K(\A)) = 3$ the possibilities are
\begin{equation*}
\epsilon_{ij} = \left(\begin{matrix} 
   & & \\
1 & & \\
1 & 1 &
\end{matrix}\right),
\left(\begin{matrix} 
   & & \\
-1 & & \\
1 & 1 &
\end{matrix}\right),
\left(\begin{matrix} 
   & & \\
1 & & \\
1 & -1 &
\end{matrix}\right)   
\end{equation*} 
up to overall multiplication by $\pm 1$. We will consider these solutions further in Section \ref{examplesSec}. Note that when $\operatorname{rank}(K(\A)) = 2$ the condition \eqref{quadraticEps} is empty.
\end{rmk}

Similarly we take a closer look at the vanishing condition \eqref{vanishingCondition}. At least in the simplest case $p = 3$ (i.e. when we are only looking at quadratic jets) there is a natural, simpler condition which implies it, and which does not involve the Euler form or $\dt_{\A}$ invariants.
\begin{lem}\label{vanishingLem} Let $\alpha_i$ be a basis of $K(\A)\otimes \R$. Suppose that for all $i, j = 1, \ldots, n$, $i \neq j$ we have either
\begin{enumerate}
\item[$\bullet$] $\alpha_j - \alpha_i$ is the class of a simple object or its shift, or
\item[$\bullet$] $\alpha_j - \alpha_i$ is the sum of two classes of simple objects or their shifts of the form $\alpha_j - \alpha_k$, $\alpha_k - \alpha_i$, or
\item[$\bullet$] $\alpha_j - \alpha_i$ is not the sum of two classes of simple objects or their shifts.
\end{enumerate}
Then the vanishing condition in Lemma \ref{nablarzetaFlat} holds for $p = 3$ and all $i, j = 1, \ldots, n$.
\end{lem}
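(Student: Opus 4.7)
\medskip

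\noindent\textbf{Proof plan.} The key technical input is a lower bound on the ${\bf s}$-valuation of the formal power series $f^{\alpha}_{\bf s}(Z)$: I claim that $f^{\alpha}_{\bf s}(Z) \in ({\bf s})^{\len(\alpha)}$. This follows directly from the definition \eqref{fFunFps}: each term is indexed by a decomposition $\alpha_1 + \cdots + \alpha_k = \alpha$ and carries the monomial factor $\prod_{i=1}^{k} {\bf s}^{[\alpha_i]_+ - [\alpha_i]_-}$, which has total ${\bf s}$-degree $\sum_i \len(\alpha_i)$. The $\ell^1$ triangle inequality gives $\sum_i \len(\alpha_i) \geq \len(\sum_i \alpha_i) = \len(\alpha)$. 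Consequently, for any decomposition $\alpha_j - \alpha_i = \beta + \gamma$ with $\beta, \gamma \neq 0$, we have
\begin{equation*}
\langle \beta, \gamma\rangle f^{\beta}_{\bf s}(Z) f^{\gamma}_{\bf s}(Z) \in ({\bf s})^{\len(\beta) + \len(\gamma)}.
\end{equation*}
So the vanishing condition reduces to the combinatorial claim: for every non-excluded decomposition $\alpha_j - \alpha_i = \beta + \gamma$, one has $\len(\beta) + \len(\gamma) \geq 3$.

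Since $\beta, \gamma$ are nonzero, $\len(\beta), \len(\gamma) \geq 1$; thus $\len(\beta) + \len(\gamma) \geq 3$ unless $\len(\beta) = \len(\gamma) = 1$ (giving $2$) or one of them has length $1$ and the other length $2$ (giving $3$, which is already fine). The only dangerous case is $\len(\beta) = \len(\gamma) = 1$, meaning both $\beta$ and $\gamma$ are classes of simples or their shifts. So I only need to rule out such decompositions, case by case according to the trichotomy in the hypothesis.

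\emph{Case 1}: $\alpha_j - \alpha_i$ is itself a simple class or its shift, i.e.\ $\pm [S_m]$. A putative length-$1$ pair would give $\pm[S_p] \pm [S_q] = \pm[S_m]$, which is impossible: if $p \neq q$ the left hand side has two nonzero components, while if $p = q$ it is either $0$ or $\pm 2 [S_p]$. Hence no such decomposition exists.

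\emph{Case 2}: $\alpha_j - \alpha_i$ is a sum of two simple classes or shifts, and any such decomposition is of the form $(\alpha_j - \alpha_k, \alpha_k - \alpha_i)$. Thus every length-$1$ decomposition is by hypothesis of precisely the excluded form, and is removed from the sum.

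\emph{Case 3}: $\alpha_j - \alpha_i$ is not a sum of two simples or shifts, so no length-$1$ decomposition exists at all.

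In all three cases every non-excluded decomposition satisfies $\len(\beta) + \len(\gamma) \geq 3$, which combined with the valuation bound above yields the claim. The main subtlety (rather than obstacle) is simply to parse the hypothesis in Case 2 correctly, confirming that \emph{all} length-$1$ decompositions, not merely \emph{some}, are of the form $(\alpha_j - \alpha_k, \alpha_k - \alpha_i)$ and hence excluded. The rest is a direct application of the triangle inequality for $\len$.
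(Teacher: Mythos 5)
Your argument is correct and is exactly the reasoning the paper compresses into its one-line proof ("obvious from the definition of the grading by length"): the valuation bound $f^{\alpha}_{\bf s}(Z) \in ({\bf s})^{\len(\alpha)}$ from the monomials ${\bf s}^{[\alpha_i]_+ - [\alpha_i]_-}$ plus the triangle inequality, followed by ruling out the length-$(1{+}1)$ decompositions via the stated trichotomy. No gaps; your Case 2 worry is also harmless since a length-$2$ class has a unique unordered decomposition into two length-$1$ classes.
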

\begin{proof} This is obvious from the definition of the grading by length.
\end{proof}
If $[S_i]$ is the basis of $K(\A)$ given by classes of simple objects, another basis $\alpha_j$ for $K(\A)\otimes \R$ satisfies the assumptions of Lemma \ref{vanishingLem} if and only if for all $i \neq j$ we have either 
\begin{equation*}
\alpha_i - \alpha_j = \pm [S]
\end{equation*} 
for some simple $S$, or 
\begin{equation*}
\alpha_i - \alpha_j = (\pm [S]) + (\pm [T])
\end{equation*} 
where, for some $k$,
\begin{equation*}
\pm [S] = \alpha_i - \alpha_k, \quad \pm [T] = \alpha_k - \alpha_j.
\end{equation*}
This condition clearly depends only on the rank of $K(\A)$.\\ 
\begin{exm} Let $\operatorname{rank}(K(\A)) = n$. If $[S_i]$ are the classes of simple objects, a possible solution $\alpha_j$ to the conditions of Lemma \ref{vanishingLem} is given by the \emph{triangular basis}
\begin{equation*}
\alpha_j = \sum^{n}_{i = j}{[S_i]}, 
\end{equation*}
so e.g. in ranks $2, 3$ we have
\begin{equation*}
\begin{matrix}
\alpha_1 = [S_1] + [S_2],   &  &\alpha_1 = [S_1] + [S_2] + [S_3],\\
\alpha_2 = [S_2]                &   &\alpha_2 = [S_2] + [S_3],\\
                                           &  &\alpha_3 = [S_3].    
\end{matrix}
\end{equation*} 
Another possible solution is given by 
\begin{equation*}
\alpha_i = (-1)^{i-1}[S_i] + [S_n],\, i = 1, \ldots, n-1; \,\, \alpha_n = [S_n]
\end{equation*}
so e.g. in rank $3$ we have
\begin{align*}
\alpha_1 &= [S_1] + [S_3],\\
\alpha_2 &= - [S_2] + [S_3],\\
\alpha_3 &= [S_3].    
\end{align*}
\end{exm}

\begin{lem}\label{VFlat} Suppose that the conditions of Lemma \ref{nablarzetaFlat} hold for fixed $i, j = 1, \ldots, n$ and $p \geq 3$. Then we have $g(x_{\alpha_j}, \nabla^{r, \zeta}_{\bf s}(\V^{\zeta}_{\bf s}) x_{\alpha_i}) = 0$ modulo terms which are of order at least $p$ in $\bf s$.   
\end{lem}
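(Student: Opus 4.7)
The plan is to mimic the calculation in Lemma~\ref{nablarzetaFlat}, replacing the two-form curvature equation by the one-form equation $\nabla^{r,\zeta}_{\bf s}(\V^{\zeta}_{\bf s}) = 0$. Since $\zeta$ has the pure form \eqref{specialZeta}, the bundle $K(\zeta)$ is locally spanned by $x_{\alpha_1}, \ldots, x_{\alpha_n}$ and the orthogonal projection $\pi^{\zeta}$ with respect to $g(x_\alpha, x_\beta) = \delta_{\alpha\beta}$ simply keeps the components along this frame. Using $[x_\alpha, x_{\alpha_i}] = (-1)^{\bra \alpha, \alpha_i\ket} \bra \alpha, \alpha_i\ket x_{\alpha + \alpha_i}$ together with $\bra \alpha_i, \alpha_i\ket = 0$, the first step is to compute the matrix
\begin{equation*}
(\V^{\zeta}_{\bf s})_{ji} = (-1)^{\bra \alpha_j, \alpha_i\ket} \bra \alpha_j, \alpha_i\ket f^{\alpha_j - \alpha_i}_{\bf s}(Z),
\end{equation*}
i.e.\ precisely the coefficient of $d\log Z(\alpha_j - \alpha_i)$ in the entry $A_{ji}$ of \eqref{Aij}. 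In this frame the equation $\nabla^{r,\zeta}_{\bf s}(\V^{\zeta}_{\bf s}) = 0$ reads $d\V^{\zeta}_{\bf s} + [A, \V^{\zeta}_{\bf s}] = 0$. The indices $k = i, j$ drop out of the commutator by $\bra \alpha_k, \alpha_k\ket = 0$, giving
\begin{equation*}
[A, \V^{\zeta}_{\bf s}]_{ji} = \sum_{k \neq i, j} (-1)^{\bra \alpha_j, \alpha_k\ket + \bra \alpha_k, \alpha_i\ket} \bra \alpha_j, \alpha_k\ket \bra \alpha_k, \alpha_i\ket f^{\alpha_j - \alpha_k}_{\bf s} f^{\alpha_k - \alpha_i}_{\bf s} \bigl(d\log Z(\alpha_j - \alpha_k) - d\log Z(\alpha_k - \alpha_i)\bigr).
\end{equation*}

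For $d\V^{\zeta}_{\bf s,ji}$ I would substitute the Joyce PDE into $df^{\alpha_j - \alpha_i}_{\bf s}$ and split the ordered decompositions $\alpha_j - \alpha_i = \beta + \gamma$ into ``triangle'' pairs $\{(\alpha_j - \alpha_k, \alpha_k - \alpha_i),(\alpha_k - \alpha_i, \alpha_j - \alpha_k)\}$ for $k \neq i, j$ and all other decompositions. Skew-symmetry of $\bra -, -\ket$ combines each triangle pair into one $k$-summand proportional to $d\log Z(\alpha_j - \alpha_k) - d\log Z(\alpha_k - \alpha_i)$, exactly as in the proof of Lemma~\ref{nablarzetaFlat}. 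The bilinearity identity $\bra \alpha_j - \alpha_k, \alpha_k - \alpha_i\ket + \bra \alpha_j, \alpha_i\ket = \bra \alpha_j, \alpha_k\ket + \bra \alpha_k, \alpha_i\ket$ (which uses $\bra \alpha_k, \alpha_k\ket = 0$) makes the signs match automatically, so the quadratic condition \eqref{quadraticCondition} forces the triangle part of $d\V^{\zeta}_{\bf s,ji}$ to equal $-[A, \V^{\zeta}_{\bf s}]_{ji}$ termwise. The remaining non-triangle decompositions contribute terms proportional to $\bra \beta', \gamma'\ket f^{\beta'}_{\bf s} f^{\gamma'}_{\bf s}$, which by \eqref{vanishingCondition} lie in $({\bf s})^p$. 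In the alternative case $\len(\alpha_j - \alpha_i) \geq p$, the length-degree lower bound $f^{\alpha}_{\bf s} \in ({\bf s})^{\len(\alpha)}$ (visible from the $\prod_i {\bf s}^{[\alpha_i]_+ - [\alpha_i]_-}$ factors in \eqref{fFunFps} via the triangle inequality on lengths) together with $\len(\alpha_j - \alpha_i) \leq \len(\alpha_j - \alpha_k) + \len(\alpha_k - \alpha_i)$ places $d\V^{\zeta}_{\bf s,ji}$ and each product $f^{\alpha_j - \alpha_k}_{\bf s} f^{\alpha_k - \alpha_i}_{\bf s}$ separately in $({\bf s})^p$, so both sides vanish to that order with no cancellation needed. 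The computation is genuinely parallel to that of Lemma~\ref{nablarzetaFlat}, and the only real work is the sign bookkeeping; conceptually the relation $A_{ji} = (\V^{\zeta}_{\bf s})_{ji}\, d\log Z(\alpha_j - \alpha_i)$ explains why the two lemmas share the same combinatorics.
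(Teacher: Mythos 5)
Your proposal is correct and follows essentially the same route as the paper: compute $(\V^{\zeta}_{\bf s})_{kl} = (-1)^{\bra \alpha_k, \alpha_l\ket}\bra \alpha_k, \alpha_l\ket f^{\alpha_k-\alpha_l}_{\bf s}(Z)$ in the frame $x_{\alpha_1},\ldots,x_{\alpha_n}$, write $\nabla^{r,\zeta}_{\bf s}(\V^{\zeta}_{\bf s}) = d\V^{\zeta}_{\bf s} + [A,\V^{\zeta}_{\bf s}]$, and then repeat the Joyce-PDE cancellation from Lemma \ref{nablarzetaFlat}, with the quadratic condition killing the triangle terms and the vanishing condition (or the length bound when $\len(\alpha_j-\alpha_i)\geq p$) handling the rest. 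Your explicit remark that the sign exponents match automatically via $\bra \alpha_j-\alpha_k,\alpha_k-\alpha_i\ket + \bra\alpha_j,\alpha_i\ket = \bra\alpha_j,\alpha_k\ket + \bra\alpha_k,\alpha_i\ket$ is a correct detail the paper leaves implicit.
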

\begin{proof} We compute
\begin{align*}
\pi^{\zeta}(\V_{\bf s}(x_{\alpha_l})) &= \pi^{\zeta} \Big(\sum_{\alpha \neq 0}  f ^{\alpha}_{\bf s}(Z)(-1)^{\bra \alpha, \alpha_l\ket} \bra \alpha, \alpha_l\ket x_{\alpha + \alpha_l} \Big)\\
&= \sum^n_{k = 1}  f ^{\alpha_k - \alpha_l}_{\bf s}(Z)(-1)^{\bra \alpha_k, \alpha_l\ket} \bra \alpha_k, \alpha_l\ket x_{\alpha_k}.
\end{align*} 
So in the local trivialisation of $K(\zeta)$ given by $x_{\alpha_1}, \ldots, x_{\alpha_n}$ the endomorphism $\V^{\zeta}_{\bf s}$ is given by the skew-symmetric matrix
\begin{equation}\label{Vzeta}
(\V^{\zeta}_{{\bf s}})_{kl} = (-1)^{\bra \alpha_k, \alpha_l\ket} \bra \alpha_k, \alpha_l\ket  f ^{\alpha_k - \alpha_l}_{\bf s}(Z).
\end{equation}
We have
\begin{align*}
\nabla^{r, \zeta}_{\bf s}(\V^{\zeta}_{\bf s}) &= d\V^{\zeta}_{\bf s} + [A, \V^{\zeta}_{\bf s}]\\
&= d(\V^{\zeta}_{\bf s})_{k l} + \sum^n_{p = 1} (A_{k p}(\V^{\zeta}_{\bf s})_{pl} - (\V^{\zeta}_{\bf s})_{k p} A_{pl})\\
&= (-1)^{\bra \alpha_k, \alpha_l\ket} \bra \alpha_k, \alpha_l \ket d f ^{\alpha_k - \alpha_l}_{\bf s}(Z)\\ 
&+ \sum^n_{p = 1} (-1)^{\bra \alpha_k, \alpha_p\ket + \bra \alpha_p, \alpha_l\ket} \bra \alpha_k, \alpha_p\ket \bra \alpha_p, \alpha_l\ket  f ^{\alpha_k - \alpha_p}_{\bf s}(Z) f ^{\alpha_p - \alpha_l}(Z)\\
&\Big(d\log Z(\alpha_k - \alpha_p) - d\log Z(\alpha_p - \alpha_l)\Big) 
\end{align*}
using the explicit form of $A_{ij}$, $(\V^{\zeta}_{{\bf s}})_{ij}$ found in Lemma \ref{nablarzetaFlat} and above. Arguing as in the proof of Lemma \ref{nablarzetaFlat} we see that if the conditions of that Lemma are satisfied for fixed $i, j = 1, \ldots, n$ and $p$ then $\nabla^{r, \zeta}_{\bf s}(\V^{\zeta}_{\bf s})$ vanishes modulo terms which are of order at least $p$ in ${\bf s}$.
\end{proof}
The following result is straightforward.
\begin{lem}\label{additionalFlatness} Suppose that $\zeta$ is of the form \eqref{specialZeta} (so $\zeta$ is a section of $K(\zeta)$ and the latter is preserved by $C$ and $\U$). Then we have
\begin{align*} 
\nonumber \nabla^{r, \zeta}_{\bf s} (C|_{K(\zeta)}) &=0,\\
\nonumber [C|_{K(\zeta)}, \U|_{K(\zeta)}] &= 0,\\
\nabla^{r, \zeta}_{\bf s}(\U|_{K(\zeta)}) - [C|_{K(\zeta)}, \V^{\zeta}_{\bf s}] + C|_{K(\zeta)} &= 0.
\end{align*}
Moreover $g|_{K(\zeta)}$ is covariantly constant with respect to $\nabla^{r, \zeta}_{\bf s}$, and $C|_{K(\zeta)}$, $\U|_{K(\zeta)}$ are symmetric and $\V^{\zeta}_{\bf s}$ is skew-symmetric with respect to $g$.
\end{lem}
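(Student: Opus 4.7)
The strategy is to derive each of the stated identities by restricting, and where needed orthogonally projecting, the corresponding identity for the infinite-dimensional Frobenius type structure $(\nabla^r_{\bf s}, C, \U, \V_{\bf s}, g)$ on $K$ furnished by Proposition \ref{DTFrobTypeStr}. The key preliminary observation is that, because $\zeta$ has the form \eqref{specialZeta}, the subbundle $K(\zeta)$ is locally spanned by $x_{\alpha_1}, \ldots, x_{\alpha_n}$. Since the quadratic form $g$ makes $\{x_\alpha\}_{\alpha \in K(\A)}$ orthonormal and both the Higgs field $C(X, \cdot)$ and the endomorphism $\U$ act diagonally in this basis, the orthogonal complement $K(\zeta)^{\perp}$ is also preserved by $C$ and $\U$. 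Equivalently, the projection $\pi^{\zeta}$ commutes with contraction against $C$ and with $\U$, and $g(\pi^{\zeta} a, b) = g(a, b)$ whenever $b \in K(\zeta)$. This single algebraic fact drives the entire argument.

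Granted this, each of the three displayed identities follows by evaluating the corresponding identity on $K$ on a section $s$ of $K(\zeta)$ and a vector field $Y$, then applying $\pi^{\zeta}$. For $\nabla^{r,\zeta}_{\bf s}(C|_{K(\zeta)}) = 0$ we start from $\nabla^r_{\bf s}(C) = 0$: the projection of $\nabla^r_{\bf s, X}(C(Y, s))$ equals $\nabla^{r,\zeta}_{\bf s, X}(C(Y, s))$ because $C(Y, s) \in K(\zeta)$, while $\pi^{\zeta} C(Y, \nabla^r_{\bf s, X} s) = C(Y, \pi^{\zeta} \nabla^r_{\bf s, X} s) = C(Y, \nabla^{r, \zeta}_{\bf s, X} s)$ by the commutation property above. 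The identity $[C|_{K(\zeta)}, \U|_{K(\zeta)}] = 0$ is the literal restriction of $[C, \U] = 0$ on $K$. For $\nabla^{r, \zeta}_{\bf s}(\U|_{K(\zeta)}) - [C|_{K(\zeta)}, \V^{\zeta}_{\bf s}] + C|_{K(\zeta)} = 0$ the same projection step works, together with the identity $\pi^{\zeta} [C, \V_{\bf s}]|_{K(\zeta)} = [C|_{K(\zeta)}, \pi^{\zeta} \V_{\bf s}|_{K(\zeta)}] = [C|_{K(\zeta)}, \V^{\zeta}_{\bf s}]$, which is another immediate consequence of the commutation of $\pi^{\zeta}$ with contraction against $C$.

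The compatibilities with $g$ are even simpler. Covariant constancy of $g|_{K(\zeta)}$ with respect to $\nabla^{r,\zeta}_{\bf s}$ follows from covariant constancy of $g$ for $\nabla^r_{\bf s}$ together with the orthogonality property $g(\pi^{\zeta} a, b) = g(a, b)$ for $b \in K(\zeta)$. The symmetry of $C|_{K(\zeta)}$ and $\U|_{K(\zeta)}$ with respect to $g|_{K(\zeta)}$ is inherited trivially from $K$, while the skew-symmetry of $\V^{\zeta}_{\bf s}$ reads
\begin{equation*}
g(\V^{\zeta}_{\bf s} s_1, s_2) = g(\pi^{\zeta} \V_{\bf s} s_1, s_2) = g(\V_{\bf s} s_1, s_2) = - g(s_1, \V_{\bf s} s_2) = - g(s_1, \V^{\zeta}_{\bf s} s_2)
\end{equation*}
using the same orthogonality relation in the second and fourth equalities and skew-symmetry of $\V_{\bf s}$ on $K$ in the middle. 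There is no substantive obstacle here; the proof is an exercise in bookkeeping once one records that $K(\zeta)^{\perp}$ is invariant under $C$ and $\U$ and that $\pi^{\zeta}$ is $g$-orthogonal.
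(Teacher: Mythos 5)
Your proof is correct, and it fleshes out exactly the argument the paper has in mind (the paper omits the proof entirely, declaring the lemma ``straightforward''). The one observation that does all the work --- that $C_X$ and $\U$ act diagonally in the $g$-orthonormal basis $\{x_\alpha\}$, so they preserve $K(\zeta)^{\perp}$ as well as $K(\zeta)$ and hence commute with $\pi^{\zeta}$, while $g(\pi^{\zeta}a,b)=g(a,b)$ for $b\in K(\zeta)$ --- is correctly identified and correctly applied to each identity.
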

Lemmas \ref{nablarzetaFlat}, \ref{VFlat} and \ref{additionalFlatness} immediately imply a result about Frobenius type structures.
\begin{cor}\label{approxFrobType} Pick a section $\zeta$ of the form \eqref{specialZeta} and suppose that the conditions of Lemma \ref{nablarzetaFlat} hold for all $i, j = 1, \ldots, n$, with the same $p \geq 3$. Then  
\begin{enumerate}
\item[$\bullet$] $- C_{\bullet}(\zeta)\!: T U \to K(\zeta)$ is an isomorphism, 
\item[$\bullet$] the structure on $K(\zeta)$ given by
\begin{equation*}
(\nabla^{r, \zeta}_{\bf s}, C|_{K(\zeta)}, \U|_{K(\zeta)}, \V^{\zeta}_{\bf s}, g|_{K(\zeta)})
\end{equation*}
is a Frobenius type structure modulo terms which are of order at least $p$ in ${\bf s}$. More precisely the conditions $F(\nabla^{r, \zeta}_{\bf s}) = 0$ and $\nabla^{r, \zeta}_{\bf s}(\V^{\zeta}_{\bf s})= 0$ hold as identities of formal power series in ${\bf s}$, modulo terms in ${\bf s}$ which lie in $({\bf s})^p$, while the remaining conditions \eqref{FrobTypeCond1} and those on the metric $g|_{K(\zeta)}$ hold automatically to all orders in ${\bf s}$. 
\end{enumerate} 
\end{cor}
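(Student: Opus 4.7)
The strategy is to assemble the three preceding lemmas directly; no further ingredient is needed.

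For the isomorphism claim, since $\zeta$ takes the form (\ref{specialZeta}) with $\alpha_1, \ldots, \alpha_n$ an $\R$-basis of $K(\A)\otimes\R$ and the $c_i(Z, {\bf s})$ nowhere vanishing on $U$, Corollary \ref{zetaCor} applies in the maximal case $r = n$. This tells us at once that $-C_\bullet(\zeta) = dZ(\zeta)\!: TU \to K(\zeta)$ is an isomorphism, and that the local sections $dZ(\del_{Z(\alpha_i)})(\zeta) = c_i(Z, {\bf s})\, x_{\alpha_i}$ form a local frame for $K(\zeta)$; equivalently $\{x_{\alpha_1}, \ldots, x_{\alpha_n}\}$ is itself a local frame for $K(\zeta)$, orthonormal with respect to $g|_{K(\zeta)}$ by the definition $g(x_\alpha, x_\beta) = \delta_{\alpha\beta}$.

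For the second bullet I would separate the Frobenius type conditions (\ref{FrobTypeCond1}) into those that hold identically and those that hold only modulo $({\bf s})^p$. The exact identities
\begin{equation*}
\nabla^{r,\zeta}_{\bf s}(C|_{K(\zeta)}) = 0, \quad [C|_{K(\zeta)}, \U|_{K(\zeta)}] = 0, \quad \nabla^{r,\zeta}_{\bf s}(\U|_{K(\zeta)}) - [C|_{K(\zeta)}, \V^{\zeta}_{\bf s}] + C|_{K(\zeta)} = 0,
\end{equation*}
together with the symmetry of $C|_{K(\zeta)}$ and $\U|_{K(\zeta)}$, the skew-symmetry of $\V^{\zeta}_{\bf s}$, and the covariant constancy of $g|_{K(\zeta)}$ with respect to $\nabla^{r,\zeta}_{\bf s}$, are the content of Lemma \ref{additionalFlatness}. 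The Higgs condition $C \wedge C = 0$ is immediate from the diagonal action $C_X(x_\alpha) = -X(Z(\alpha))\, x_\alpha$, which makes $C_X C_Y(x_\alpha) = X(Z(\alpha))\,Y(Z(\alpha))\,x_\alpha$ symmetric in $X$ and $Y$.

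The remaining two conditions are the flatness $F(\nabla^{r,\zeta}_{\bf s}) = 0$ and the $\nabla^{r,\zeta}_{\bf s}$-flatness of $\V^{\zeta}_{\bf s}$. The uniform hypothesis of the corollary asserts that the conditions of Lemma \ref{nablarzetaFlat} hold for every pair $(i,j)$ with the same exponent $p$. Since those are also the hypotheses invoked in Lemma \ref{VFlat}, applying both lemmas componentwise in the orthonormal frame $\{x_{\alpha_i}\}$ yields
\begin{equation*}
g(x_{\alpha_j}, F(\nabla^{r,\zeta}_{\bf s})\, x_{\alpha_i}) \in ({\bf s})^p, \qquad g(x_{\alpha_j}, \nabla^{r,\zeta}_{\bf s}(\V^{\zeta}_{\bf s})\, x_{\alpha_i}) \in ({\bf s})^p
\end{equation*}
for all $i, j = 1, \ldots, n$. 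Nondegeneracy of $g|_{K(\zeta)}$ on this frame then promotes these componentwise vanishings to the global formal identities $F(\nabla^{r,\zeta}_{\bf s}) \equiv 0$ and $\nabla^{r,\zeta}_{\bf s}(\V^{\zeta}_{\bf s}) \equiv 0$ modulo $({\bf s})^p$. I expect no genuine obstacle: the substance has already been carried out in Lemmas \ref{nablarzetaFlat}, \ref{VFlat} and \ref{additionalFlatness}, and the only point to record is that the hypothesis must be imposed \emph{uniformly} in $(i,j)$ so that a common exponent $p$ controls every matrix entry simultaneously.
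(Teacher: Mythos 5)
Your proposal is correct and follows exactly the route the paper intends: the paper offers no separate proof, presenting the corollary as an immediate consequence of Lemmas \ref{nablarzetaFlat}, \ref{VFlat} and \ref{additionalFlatness}, which is precisely the assembly you carry out (with Corollary \ref{zetaCor} supplying the isomorphism claim). Your explicit remark that the componentwise vanishings in the orthonormal frame $\{x_{\alpha_i}\}$ upgrade to the global identities modulo $({\bf s})^p$ is the only detail the paper leaves implicit, and it is handled correctly.
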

\begin{definition} We call a section $\zeta$ for which the conclusions of Corollary \ref{approxFrobType} apply a \emph{good section}, and we say that the heart $\A$ is \emph{good} if there exists a good section $\zeta\!: U \subset \stab(\A) \to K$. Similarly we say that the basis of $K(\A)\otimes \R$ underlying $\zeta$ is a \emph{good basis}. 
\end{definition}  
Thus Corollary \ref{approxFrobType} gives a characterization of good bases and sections. The finite-dimensional Frobenius type structures obtained via Corollary \ref{approxFrobType} are only approximate, that is they are order $p$ jets of families of Frobenius type structures on the bundle $K(\zeta)$. Before we may apply Hertling's result (Theorem \ref{HertThm}) we need to consider the problem of lifting them to genuine Frobenius type structures. This problem will be solved in the next section. 

\begin{rmk}\label{zetaDependence}
The structure on $K(\zeta)$ specified by $(\nabla^{r, \zeta}_{\bf s},C|_{K(\zeta)}, \U|_{K(\zeta)}, \V^{\zeta}_{\bf s},$ $ g|_{K(\zeta)})$ depends on the choice of a section $\zeta$ of the form \eqref{specialZeta} such that the conditions of Lemma \ref{nablarzetaFlat} hold for all $i, j = 1, \ldots, n$. The section $\zeta$ encodes moduli given by the choice of basis $\alpha_i$ for $K(\A) \otimes \R$ (satisfying the strong constraints of Lemma \ref{nablarzetaFlat}), as well as those given by the choice of holomorphic functions $c_i(Z, {\bf s})$. However it is clear from the results of this section that the structure only depends on the choice of basis. This is in contrast to the Frobenius manifolds we will construct by using Theorem \ref{HertThm}, which will depend on the $c_i(Z, {\bf s})$ moduli as well (through the pullback along $- C_{\bullet}(\zeta)\!: T U \to K(\zeta)$).
\end{rmk}
\section{Lifting to finite-dimensional Frobenius type structures}\label{liftSection}
This section is devoted to proving the following result.
\begin{prop}\label{liftingProp} The approximate Frobenius type structure given by Corollary \ref{approxFrobType} can be lifted canonically to a genuine Frobenius type structure. In other words the solutions to the equations $F(\nabla^{r, \zeta}_{\bf s}) = 0$ and \eqref{FrobTypeCond1} given by Corollary \ref{approxFrobType}, which are defined modulo $({\bf s})^p$, can be lifted canonically to solutions to all orders in ${\bf s}$, and these lifted formal power series solutions converge provided $|| {\bf s} ||$ is sufficiently small. Moreover the conditions on the metric $g|_{K(\zeta)}$ are also preserved.
\end{prop}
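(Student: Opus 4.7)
The strategy is to reduce Proposition \ref{liftingProp} to the Riemann-Hilbert-Birkhoff correspondence for semisimple isomonodromic deformations. In the local frame $x_{\alpha_1}, \ldots, x_{\alpha_n}$ of $K(\zeta)$ and the canonical coordinates $u_i = Z(\alpha_i)$ on $U$, the endomorphism $\U = \operatorname{diag}(u_i)$, the Higgs field $C = -\operatorname{diag}(du_i)$ and the metric $g$ are fixed diagonal data. Comparing \eqref{Aij} and \eqref{Vzeta} yields the relation $A_{ji} = (\V^{\zeta}_{\bf s})_{ji}\, d\log(u_j - u_i)$, which is exactly the shape of Dubrovin's connection in canonical coordinates. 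Hence the whole structure is encoded by the skew-symmetric matrix $V({\bf s};u)=(\V^{\zeta}_{\bf s})_{jk}$, and the two conditions $F(\nabla^{r,\zeta}_{\bf s}) = 0$ and $\nabla^{r,\zeta}_{\bf s}(\V^{\zeta}_{\bf s}) = 0$ assemble into the Darboux-Egoroff isomonodromy system for $V$. By Corollary \ref{approxFrobType} the matrix $V^{(0)}$ built from the generating functions $f^\alpha_{\bf s}$ solves this system modulo $({\bf s})^p$.

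Next, I would package everything into the extended meromorphic flat connection
\[
\widehat\nabla = \nabla^{r,\zeta}_{\bf s} + z^{-1}C + \bigl(-z^{-2}\U + z^{-1}\V^{\zeta}_{\bf s}\bigr)\,dz
\]
on the pullback of $K(\zeta)$ to $U \times \C^*_z$, whose flatness (combined with Lemma \ref{additionalFlatness}) is equivalent to the full Frobenius-type conditions. This has an irregular singularity of Poincar\'e rank one at $z=0$, and in the semisimple locus (away from the walls $u_i = u_j$) Birkhoff's theorem classifies such connections with prescribed $(\U, C, g)$ up to gauge by their Stokes matrices $S_\pm$ at $z = 0$ together with the formal monodromy at $z = \infty$, which is trivial since $V$ is skew-symmetric. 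From $\widehat\nabla^{(0)}$ one reads off Stokes matrices $S^{(0)}_\pm$ well-defined modulo $({\bf s})^p$, and the canonical (``minimal'') lift is to represent each class by its polynomial truncation, giving $\tilde S_\pm \in GL_n(\C[{\bf s}])$. Inverse Riemann-Hilbert applied to $(\U, C, g, \tilde S_\pm)$ produces a unique exact flat connection $\widehat{\tilde\nabla}$, whose skew-symmetric $\tilde V$ then agrees with $V^{(0)}$ modulo $({\bf s})^p$ by uniqueness of the Stokes classification. The other natural lifts mentioned in footnote 2 correspond to alternative higher-order completions of $\tilde S_\pm$.

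Convergence for small $\|{\bf s}\|$ should follow from the classical analytic dependence of the inverse Riemann-Hilbert construction on the Stokes parameters: since $\tilde S_\pm$ are polynomial in ${\bf s}$, the reconstructed $\tilde V$ is holomorphic in ${\bf s}$ on a uniform polydisk about the origin, with estimates uniform on compact subsets of $U$ avoiding the walls. Preservation of the metric and symmetry conditions is automatic provided the Birkhoff normalization is done in the orthogonal gauge subgroup, the standard setting for Dubrovin's semisimple formalism.

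The main obstacle I anticipate is making the last two steps genuinely rigorous in our framework: one must verify that the polynomial Stokes-matrix lift produces a structure satisfying \emph{all} the symmetry and metric conditions (not merely flatness of $\widehat\nabla$), and that the reconstruction gives a convergent rather than a merely asymptotic expansion. Both points are standard in isomonodromy theory but require careful checking against the explicit combinatorial form of the $f^\alpha_{\bf s}$ provided by Joyce's formulas.
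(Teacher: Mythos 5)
Your proposal is correct and follows essentially the same route as the paper: encode the approximate structure as an isomonodromic family of connections $d + (\U z^{-2} - \V^{\zeta}_{\bf s} z^{-1})dz$, extract the ($Z$-independent) Stokes data modulo $({\bf s})^p$, lift it polynomially (the canonical lift, with the finitely many other natural lifts coming from alternative completions), and solve the inverse Riemann--Hilbert problem to recover a genuine skew-symmetric $\tilde{\V}^{\zeta}_{\bf s}$ holomorphic in ${\bf s}$ for $\|{\bf s}\|$ small. The only difference is presentational: you invoke Dubrovin's classical $S_\pm$/Birkhoff formalism, whereas the paper works ray-by-ray with the Stokes factors and the existence theorem of Bridgeland--Toledano-Laredo (\cite[Section 4.8]{bt_stokes}), which directly supplies the skew-symmetry, the reduction modulo $({\bf s})^p$, and the holomorphic dependence on ${\bf s}$ that you flag as the points needing careful checking.
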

The proof of Proposition \ref{liftingProp} uses the general theory of isomonodromy for a family of meromorphic connections on $\PP^1$ with poles divisor $2\cdot 0 + 1\cdot \infty$, and in particular the relevant notions of Stokes factors, matrices and multipliers (see e.g. \cite[{Section\,2}]{bt_stokes}).

We consider the family of meromorphic connections on the holomorphically trivial vector bundle on $\PP^1$ modelled on $K(\zeta)$ (more precisely, a fibre of the trivial bundle $K(\zeta)$) given by
\begin{equation*} 
\nabla^{\zeta}_{\bf s}(Z) = d + \Big( \frac{\U(Z)}{z^2} - \frac{\V^{\zeta}_{\mathbf{s}}(Z)}{z} \Big) d z
\end{equation*}
with parameter space $U \subset \stab(\A)$. This induces a family of connections on the holomorphically trivial principal bundle on $\PP^1$ with fibre $GL(K(\zeta)\pow{\bf s})$.
\begin{definition} Let $P$ be the holomorphically trivial principal bundle on $\PP^1$ with fibre the complex affine algebraic group $GL(K(\zeta)\pow{\bf s} / ({\bf s})^p)$ corresponding to the $GL(K(\zeta)\pow{\bf s})$-bundle described above. 

We define the family of connections $\nabla^{\zeta}_{{\bf s}, p}(Z)$ on $P$ as the reduction modulo $({\bf s})^p$ of the connections $\nabla^{\zeta}_{\bf s}(Z)$, that is
\begin{equation*}
\nabla^{\zeta}_{{\bf s}, p}(Z) = d + \Big( \frac{\U(Z)}{z^2} - \frac{\V^{\zeta}_{\mathbf{s}, p}(Z)}{z} \Big) d z
\end{equation*}
where $\V^{\zeta}_{\mathbf{s}, p} \in \mathfrak{gl}(K(\zeta)\pow{\bf s} / ({\bf s})^p)$ is the reduction modulo $({\bf s})^p$ of $\V^{\zeta}_{\mathbf{s}}$.
\end{definition}
\begin{lem} The family of connections on $P$ given by $\nabla^{\zeta}_{{\bf s}, p}(Z)$ has constant generalised monodromy as $Z$ varies in $U$.
\end{lem}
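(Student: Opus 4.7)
The plan is to realise the family $\nabla^{\zeta}_{{\bf s},p}(Z)$ as the vertical restriction of a flat extended meromorphic connection on $U \times \PP^1$, and then to invoke the classical principle that flatness of such an extension preserves all generalised monodromy data.

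Concretely, I would introduce the extended connection
\begin{equation*}
\tilde{\nabla}_{{\bf s},p} = \nabla^{r,\zeta}_{{\bf s},p} + \frac{C|_{K(\zeta)}}{z} + \Bigl(\frac{\U|_{K(\zeta)}}{z^2} - \frac{\V^{\zeta}_{{\bf s},p}}{z}\Bigr)\,dz
\end{equation*}
on the pullback of the bundle with fibre $K(\zeta)\pow{\bf s}/({\bf s})^p$ to $U \times \C^*_z$, and the corresponding induced principal connection on the pullback of $P$ to $U \times \PP^1$. A direct Cartan-calculus computation, splitting the curvature $d\tilde{\nabla}_{{\bf s},p} + \tilde{\nabla}_{{\bf s},p}\wedge\tilde{\nabla}_{{\bf s},p}$ into its $dZ\wedge dZ$ and $dZ\wedge dz$ components and then collecting Laurent coefficients in $z$, identifies the flatness of $\tilde{\nabla}_{{\bf s},p}$ with the full Frobenius type package \eqref{FrobTypeCond1}, together with $F(\nabla^{r,\zeta}_{{\bf s},p}) = 0$, $\nabla^{r,\zeta}_{{\bf s},p}(\V^{\zeta}_{{\bf s},p}) = 0$, $\nabla^{r,\zeta}_{{\bf s},p}(C|_{K(\zeta)}) = 0$ and $C|_{K(\zeta)}\wedge C|_{K(\zeta)} = 0$.

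By Corollary \ref{approxFrobType} combined with Lemma \ref{additionalFlatness}, each of these identities holds modulo $({\bf s})^p$, and after reducing to the structure group $GL(K(\zeta)\pow{\bf s}/({\bf s})^p)$ they hold exactly, so $\tilde{\nabla}_{{\bf s},p}$ is genuinely flat, and its restriction to $\{Z\}\times\PP^1$ recovers $\nabla^{\zeta}_{{\bf s},p}(Z)$. Since $\U|_{K(\zeta)} = Z$ acts on the natural basis of $K(\zeta)$ with eigenvalues $Z(\alpha_1),\ldots,Z(\alpha_n)$, I would further restrict $U$ to the locus where these are pairwise distinct, so that canonical Stokes sectors around $z=0$, a formal normalised fundamental solution and the corresponding sectorial solutions and Stokes factors for $\nabla^{\zeta}_{{\bf s},p}(Z)$ are well-defined and depend holomorphically on $Z$. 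The $\nabla^{r,\zeta}_{{\bf s},p}$-directional flatness of $\tilde{\nabla}_{{\bf s},p}$ then propagates these frames compatibly as $Z$ varies, forcing the Stokes factors, the formal monodromy at $0$, the connection matrix to $\infty$ and the regular monodromy at $\infty$ to be locally constant in $Z$.

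The main obstacle is checking that the classical Stokes analysis goes through for a connection valued in the pro-nilpotent thickening $\C\pow{\bf s}/({\bf s})^p$ rather than in $\C$. This is handled by iterating along the ${\bf s}$-adic filtration: the leading piece at ${\bf s} = 0$ gives a purely $\C$-valued system to which the classical existence and uniqueness theorems for sectorial solutions and Stokes factors apply, and the remaining nilpotent corrections are determined uniquely order by order from a linear inhomogeneous system whose right-hand side is built from already-constructed lower-order data.
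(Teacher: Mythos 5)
Your proposal is correct and follows essentially the same route as the paper: one introduces the extended connection $\nabla^{r,\zeta}_{{\bf s},p} - \frac{1}{z}\,dZ + \bigl(\frac{\U(Z)}{z^2} - \frac{\V^{\zeta}_{{\bf s},p}(Z)}{z}\bigr)dz$ on the pullback of $P$ to $U\times\PP^1$, observes that its flatness is exactly the package $F(\nabla^{r,\zeta}_{{\bf s},p})=0$ plus \eqref{FrobTypeCond1}, which by Corollary \ref{approxFrobType} holds exactly after reduction modulo $({\bf s})^p$, and concludes by the standard equivalence of flatness with isomonodromy. The only difference is that the paper delegates this last equivalence (including the Stokes analysis over the pro-nilpotent coefficient ring) to the citation of \cite[Section 3.3]{bt_stokes}, whereas you sketch it directly.
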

\begin{proof} By Corollary \ref{approxFrobType} we have $F(\nabla^{r, \zeta}_{{\bf s}, p}) = 0$ and the equations \eqref{FrobTypeCond1} hold in the bundle $P$. This can be stated equivalently by introducing a connection on the pullback of $P$ to $U \times \PP^1$, given by 
\begin{equation*}
\nabla^{r, \zeta}_{{\bf s}, p} - \frac{1}{z} dZ + \Big( \frac{\U(Z)}{z^2} - \frac{\V^{\zeta}_{\mathbf{s}, p}(Z)}{z} \Big) d z,
\end{equation*}
which is then flat. It is well known that flatness of this connection is precisely the isomonodromy condition (see e.g. \cite[Section 3.3]{bt_stokes}).
\end{proof}
Let us focus for a moment on the special case $p = 3$, i.e. quadratic jets of Frobenius type structures. For generic $Z$ the generalised monodromy of $\nabla^{\zeta}_{{\bf s}, 3}(Z)$ can be computed explicitly. We introduce the set of roots (eigenvalues of $\ad( \U )$)
\begin{equation*}
\{ Z(\alpha_i  -  \alpha_j), i \neq j\} \subset \C 
\end{equation*}
and assume its elements are distinct. We write $E_{ij}$ for the elementary matrices. Finally we introduce the function
\begin{align*}
M_2(z_1, z_2) &= 2\pi i \int_{[0, z_1 + z_2]} \frac{dt}{t - z_1}.
\end{align*} 
\begin{lem}\label{StokesLem} The generalised monodromy of the connection $\nabla^{\zeta}_{{\bf s}, 3}(Z)$ is given by 
\begin{enumerate}
\item[$\bullet$] the Stokes rays  
\begin{equation*}
\ell_{ij}(Z) = \R_{> 0}  Z(\alpha_i - \alpha_j)  \subset \C^*
\end{equation*}
for $i \neq j$,
\item[$\bullet$] the corresponding Stokes factors 
\begin{align}\label{approxStokesFactors}
\nonumber \calS_{ij}(Z) = \calS_{\ell_{ij}}(Z) = I &- 2\pi i \, (\V^{\zeta}_{{\bf s}, 3})_{ij} E_{ij}\\
 &+ \sum_k M_2(Z(\alpha_i - \alpha_k), Z(\alpha_k - \alpha_j)) (\V^{\zeta}_{{\bf s}, 3})_{i k } (\V^{\zeta}_{{\bf s}, 3})_{k j} E_{ij}.
\end{align}
\end{enumerate}
 \end{lem}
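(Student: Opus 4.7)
The plan is to compute the Stokes data of $\nabla^{\zeta}_{{\bf s},3}(Z)$ by perturbation theory in the regular part $\V^{\zeta}_{{\bf s},3}(Z)$. The key observation is that, by \eqref{Vzeta}, $(\V^{\zeta}_{{\bf s},3})_{ij} = (-1)^{\bra \alpha_i, \alpha_j \ket} \bra \alpha_i, \alpha_j \ket f^{\alpha_i - \alpha_j}_{{\bf s}}(Z)$, and for $i \neq j$ the series $f^{\alpha_i - \alpha_j}_{{\bf s}}(Z)$ has leading order in ${\bf s}$ equal to $\len(\alpha_i - \alpha_j) \geq 1$. Hence all off-diagonal entries of $\V^{\zeta}_{{\bf s},3}$ lie in the ideal $({\bf s})$, and only contributions up to second order in $\V^{\zeta}_{{\bf s},3}$ can survive modulo $({\bf s})^3$.

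First I would diagonalise $\U(Z)$ in the basis $x_{\alpha_i}$ (on which $\U$ acts as $u_i := Z(\alpha_i)$) and pass to the gauge $\Phi(z) = e^{\U(Z)/z} G(z)$. The flat-section equation becomes
\[ G'(z) = z^{-1}\, e^{-\U(Z)/z}\, \V^{\zeta}_{{\bf s},3}(Z)\, e^{\U(Z)/z}\, G(z), \]
whose off-diagonal coefficients carry the factors $z^{-1} e^{(u_j - u_i)/z}$. Since $e^{(u_j - u_i)/z}$ is subdominant exactly on one side of the ray $\ell_{ij}(Z) = \R_{>0}\, Z(\alpha_i - \alpha_j)$, and these rays are pairwise distinct for generic $Z$, they are precisely the Stokes rays of $\nabla^{\zeta}_{{\bf s},3}(Z)$.

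Next I would apply the standard iterated-integral construction of Stokes factors (cf.\ \cite{bt_stokes}). In each Stokes sector there is a unique fundamental solution $G^{\mathrm{sec}}(z)$ asymptotic to $I$ as $z \to 0$; expanding $G^{\mathrm{sec}} = I + G^{(1)} + G^{(2)} + \cdots$ in powers of $\V^{\zeta}_{{\bf s},3}$ and comparing adjacent sectorial solutions across $\ell_{ij}(Z)$ yields $\calS_{ij}(Z)$ order by order. At first order
\[ G^{(1)}_{ij}(z) = (\V^{\zeta}_{{\bf s},3})_{ij} \int_0^z e^{(u_j - u_i)/t}\, \frac{dt}{t}, \]
and the change of variable $w = (u_j - u_i)/t$ together with a residue at $w = 0$ produces the jump $-2\pi i\, (\V^{\zeta}_{{\bf s},3})_{ij} E_{ij}$ across $\ell_{ij}(Z)$, giving the linear term of \eqref{approxStokesFactors}. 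At second order, the double iterated integral
\[ G^{(2)}_{ij}(z) = \sum_k (\V^{\zeta}_{{\bf s},3})_{ik}(\V^{\zeta}_{{\bf s},3})_{kj} \int_0^z \frac{dt_1}{t_1} \int_0^{t_1} \frac{dt_2}{t_2}\, e^{(u_k - u_i)/t_1}\, e^{(u_j - u_k)/t_2} \]
contributes to the $(i,j)$-entry because the intermediate exponentials combine, on the ray $\ell_{ij}$, to an effective $e^{(u_j - u_i)/z}$; the discontinuity across $\ell_{ij}(Z)$ collapses, after a Laplace-type residue on the inner variable, to a single-variable integral with a simple pole at $t = Z(\alpha_i - \alpha_k)$, which one identifies with $M_2(Z(\alpha_i - \alpha_k), Z(\alpha_k - \alpha_j))$. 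Orders $\geq 3$ in $\V^{\zeta}_{{\bf s},3}$ lie automatically in $({\bf s})^3$ and can be discarded.

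The main obstacle is this second-order contour-deformation step. Concretely, one must track carefully, as $z$ crosses $\ell_{ij}(Z)$, the interplay between the poles at $t_1 = 0$, $t_2 = 0$ and the saddle structure of the product $e^{(u_k - u_i)/t_1} e^{(u_j - u_k)/t_2}$, and then verify that the resulting integral along $[0,\, Z(\alpha_i - \alpha_j)]$ with the pole at $t = Z(\alpha_i - \alpha_k)$ reproduces precisely $M_2(z_1, z_2) = 2\pi i \int_{[0, z_1 + z_2]} dt/(t - z_1)$ with $z_1 = Z(\alpha_i - \alpha_k)$, $z_2 = Z(\alpha_k - \alpha_j)$. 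Once this identification is made, combining the first- and second-order contributions yields \eqref{approxStokesFactors} and the rest is bookkeeping over the intermediate index $k$.
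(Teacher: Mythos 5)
Your proposal is correct and follows essentially the same route as the paper: the paper's proof is a one-line appeal to the theorem of Bridgeland--Toledano-Laredo (\cite[Section 4.5]{bt_stokes}) applied to the bundle $P$, which expresses the Stokes factors as the iterated-integral ($M_k$) series in the entries of $\V^{\zeta}_{{\bf s},3}$ that you re-derive by perturbation theory. The one ingredient specific to this lemma --- that the off-diagonal entries of $\V^{\zeta}_{{\bf s},3}$ lie in $({\bf s})$, so the series truncates at the $M_2$ term modulo $({\bf s})^3$ --- is exactly your opening observation.
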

\begin{proof} The result follows from the Theorem in \cite[Section 4.5]{bt_stokes} applied to the bundle $P$. 
\end{proof}
The following result allows to compute the monodromy modulo $({\bf s})^{3}$ in the situation of Lemma \ref{vanishingLem}.
\begin{cor}\label{simpleStokesCor} Suppose $\alpha_i - \alpha_j$ is the class of a simple object or the sum of classes of simple objects of the form $\alpha_i - \alpha_k$, $\alpha_k - \alpha_j$. Then we have
\begin{equation*}
\calS_{ij}(Z) = I - (-1)^{\bra \alpha_i, \alpha_j\ket} \bra \alpha_i, \alpha_j\ket \dt_{\A}(\alpha_i - \alpha_j, Z) {\bf s}^{\alpha_i - \alpha_j} E_{ij}.
\end{equation*}
\end{cor}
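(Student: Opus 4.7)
The plan is to substitute the explicit expression \eqref{Vzeta} for the entries of $\V^{\zeta}_{{\bf s}, 3}$ into the formula \eqref{approxStokesFactors} of Lemma \ref{StokesLem} and analyse the resulting expression modulo $({\bf s})^3$ by tracking ${\bf s}$-degrees. By \eqref{fFunFps} each ordered decomposition $\alpha = \beta_1 + \cdots + \beta_k$ contributes to $f^{\alpha}_{\bf s}(Z)$ a monomial of ${\bf s}$-degree $\sum_l \len(\beta_l) \geq k$, so only decompositions with $k \leq 2$ survive modulo $({\bf s})^3$, and those with $k = 2$ contribute at degree exactly $2$ only when $\len(\beta_1) = \len(\beta_2) = 1$.

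Consider first the case $\alpha_i - \alpha_j = [S]$ for some class $[S]$ of a simple object or its shift. Then $\len(\alpha_i - \alpha_j) = 1$ and no nontrivial two-part decomposition has total length $\leq 2$, so $f^{\alpha_i - \alpha_j}_{\bf s}(Z) \equiv \dt_{\A}(\alpha_i - \alpha_j, Z) J_1(Z([S])) {\bf s}^{[S]} \pmod{({\bf s})^2}$. Moreover, each quadratic term $(\V^{\zeta}_{{\bf s}, 3})_{i\ell}(\V^{\zeta}_{{\bf s}, 3})_{\ell j}$ in \eqref{approxStokesFactors} has ${\bf s}$-degree at least $\len(\alpha_i - \alpha_\ell) + \len(\alpha_\ell - \alpha_j) \geq 2$, and a contribution at this minimal degree would require the length-one class $[S]$ to decompose as a sum of two length-one classes, which is impossible. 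Thus the entire quadratic sum lies in $({\bf s})^3$ and $\calS_{ij}(Z)$ reduces to $I - 2\pi i (\V^{\zeta}_{{\bf s}, 3})_{ij} E_{ij}$, which yields the claimed formula via the normalization $2\pi i J_1(z) = 1$ and \eqref{Vzeta}.

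If instead $\alpha_i - \alpha_j$ is the sum of two simple classes of the form $\alpha_i - \alpha_k$ and $\alpha_k - \alpha_j$, then $\len(\alpha_i - \alpha_j) = 2$ and only the index $\ell = k$ contributes at ${\bf s}$-degree $\leq 2$ to the quadratic sum in \eqref{approxStokesFactors}, since for any other $\ell$ the sum $\len(\alpha_i - \alpha_\ell) + \len(\alpha_\ell - \alpha_j)$ is strictly larger by the hypothesis. The function $f^{\alpha_i - \alpha_j}_{\bf s}(Z)$ receives degree-$2$ contributions from the $k = 1$ term and from the two length-preserving ordered $2$-decompositions. The claim then reduces to an identity modulo $({\bf s})^3$ of the shape
\begin{align*}
& -2\pi i\,(-1)^{\bra\alpha_i, \alpha_j\ket} \bra\alpha_i, \alpha_j\ket\, f^{\alpha_i - \alpha_j}_{\bf s}(Z) \\
& \quad + M_2(Z(\alpha_i - \alpha_k), Z(\alpha_k - \alpha_j))\,(\V^{\zeta}_{{\bf s}, 3})_{ik}(\V^{\zeta}_{{\bf s}, 3})_{kj} \\
& \equiv -(-1)^{\bra\alpha_i, \alpha_j\ket} \bra\alpha_i, \alpha_j\ket\, \dt_{\A}(\alpha_i - \alpha_j, Z)\,{\bf s}^{\alpha_i - \alpha_j},
\end{align*}
which is the quadratic part of Joyce's wall-crossing formula expressing $\dt_{\A}(\alpha_i - \alpha_j, Z)$ in terms of $\dt_{\A}(\alpha_i - \alpha_k, Z)$, $\dt_{\A}(\alpha_k - \alpha_j, Z)$ and the Euler pairing. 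The main obstacle is this last identity: it requires matching the normalizations of the Joyce functions $J_1, J_2$, the connected-tree coefficient $c(-,-)$, and the Volterra integral $M_2$ against the $2\pi i$ conventions for the Stokes data. Once the degree counting above has reduced the problem to this finite linear combination, the identity becomes a direct algebraic verification from the definitions.
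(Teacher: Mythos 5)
Your degree-counting reduction is correct and follows the same route as the paper: in the simple case only the linear term of \eqref{approxStokesFactors} survives modulo $({\bf s})^3$, and in the two-step case the problem reduces exactly to the displayed identity between the linear Stokes term, the single surviving $M_2$ correction, and the claimed right-hand side. The paper finishes in the same way, by expanding $f^{\alpha_i-\alpha_j}_{\bf s}(Z)$ modulo $({\bf s})^3$ using the explicit formulae of \cite{bt_stab}.

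However, two points in your last step need repair. First, the identity you arrive at is not ``the quadratic part of Joyce's wall-crossing formula expressing $\dt_{\A}(\alpha_i-\alpha_j,Z)$ in terms of $\dt_{\A}(\alpha_i-\alpha_k,Z)$ and $\dt_{\A}(\alpha_k-\alpha_j,Z)$'' --- all invariants are evaluated at the same $Z$ and nothing is being re-expressed. What actually happens is a cancellation: the order-two term of $f^{\alpha_i-\alpha_j}_{\bf s}(Z)$, which by \cite{bt_stab} equals $\tfrac{1}{(2\pi i)^2}(-1)^{\bra \alpha_i-\alpha_k,\alpha_k-\alpha_j\ket}\bra \alpha_i-\alpha_k,\alpha_k-\alpha_j\ket M_2(\cdots)\dt_{\A}(\alpha_i-\alpha_k,Z)\dt_{\A}(\alpha_k-\alpha_j,Z){\bf s}^{\alpha_i-\alpha_j}$, cancels against the $M_2(\V^{\zeta}_{{\bf s},3})_{ik}(\V^{\zeta}_{{\bf s},3})_{kj}$ term, leaving only the order-one piece $\tfrac{1}{2\pi i}\dt_{\A}(\alpha_i-\alpha_j,Z){\bf s}^{\alpha_i-\alpha_j}$. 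Second, this cancellation is \emph{not} a direct consequence of the definitions of $J_1$, $J_2$, $c$ and $M_2$ alone: matching the Euler-form prefactors requires the quadratic condition \eqref{quadraticCondition},
\begin{equation*}
(-1)^{\bra \alpha_i, \alpha_j \ket+\bra \alpha_i - \alpha_k, \alpha_k - \alpha_j \ket} \bra \alpha_i, \alpha_j\ket \bra \alpha_i - \alpha_k, \alpha_k - \alpha_j \ket
= (-1)^{\bra \alpha_i , \alpha_k \ket + \bra \alpha_k, \alpha_j\ket} \bra \alpha_i , \alpha_k \ket \bra \alpha_k, \alpha_j\ket,
\end{equation*}
which holds only because the basis $\alpha_i$ is assumed good (it is a hypothesis carried over from Lemma \ref{nablarzetaFlat}, not an identity of skew forms). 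You should state explicitly that this condition is being invoked; without it the two $M_2$ terms do not cancel and the clean formula fails.
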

\begin{proof} If $\alpha_i - \alpha_j$ is the class of a simple object or its shift then according to \eqref{approxStokesFactors} we have modulo $({\bf s})^3$
\begin{align*}
\calS_{ij}(Z) &= I - 2\pi i (-1)^{\bra \alpha_i, \alpha_j \ket} \bra \alpha_i, \alpha_j\ket f^{\alpha_i - \alpha_j}_{\bf s}(Z) E_{ij}\\
&= I - (-1)^{\bra \alpha_i, \alpha_j\ket} \bra \alpha_i, \alpha_j\ket \dt_{\A}(\alpha_i - \alpha_j, Z) {\bf s}^{\alpha_i - \alpha_j} E_{ij}.
\end{align*}
In the other case we have similarly modulo $({\bf s})^3$
\begin{align*}
\calS_{ij}(Z) = I & - 2\pi i (-1)^{\bra \alpha_i, \alpha_j \ket} \bra \alpha_i, \alpha_j\ket f^{\alpha_i - \alpha_j}_{\bf s}(Z) E_{ij}\\
&+  (-1)^{\bra \alpha_i , \alpha_k \ket + \bra \alpha_k, \alpha_j\ket} \bra \alpha_i , \alpha_k \ket \bra \alpha_k, \alpha_j\ket \\
&M_2(Z(\alpha_k - \alpha_i), Z(\alpha_j - \alpha_k))f^{\alpha_i - \alpha_k}_{\bf s}(Z) f^{\alpha_k - \alpha_j}_{\bf s}(Z) E_{ij}.
\end{align*}
Let $\log(z)$ denote the branch of the complex logarithm branched along $[0, +\infty)$. According to the formulae for holomorphic generating functions in \cite{bt_stab} we have modulo $({\bf s})^3$
\begin{align*}
f^{\alpha_i - \alpha_j}_{\bf s}(Z) = &\frac{1}{2\pi i} \dt(\alpha_i - \alpha_j, Z) {\bf s}^{\alpha_i - \alpha_j}\\
& + \frac{1}{(2\pi i)^2} (-1)^{\bra \alpha_i - \alpha_k, \alpha_k - \alpha_j \ket}\bra \alpha_i - \alpha_k, \alpha_k - \alpha_j \ket\\ &\quad M_2(Z(\alpha_i - \alpha_k), Z(\alpha_k - \alpha_j))\\& \quad \dt_{\A}(\alpha_i - \alpha_k, Z) \dt_{\A}(\alpha_k - \alpha_j, Z) {\bf s}^{\alpha_i - \alpha_j}.
\end{align*}
On the other hand we have modulo $({\bf s})^3$
\begin{equation*}
f^{\alpha_i - \alpha_k}_{\bf s}(Z) f^{\alpha_k - \alpha_j}_{\bf s}(Z) = \frac{1}{(2\pi i)^2} \dt_{\A}(\alpha_i - \alpha_k, Z) \dt_{\A}(\alpha_k - \alpha_j, Z) {\bf s}^{\alpha_i - \alpha_j}.
\end{equation*}
Moreover the quadratic condition gives 
\begin{align*}
& (-1)^{\bra \alpha_i, \alpha_j \ket} \bra \alpha_i, \alpha_j\ket (-1)^{\bra \alpha_i - \alpha_k, \alpha_k - \alpha_j \ket}\bra \alpha_i - \alpha_k, \alpha_k - \alpha_j \ket \\
&=  (-1)^{\bra \alpha_i , \alpha_k \ket + \bra \alpha_k, \alpha_j\ket} \bra \alpha_i , \alpha_k \ket \bra \alpha_k, \alpha_j\ket.  
\end{align*}
The claim follows.
\end{proof}
\begin{exm}\label{A2} Suppose $\A = \A(A_2, 0)$ for the quiver $A_2 = \bullet \to \bullet$. The simple objects are $S_1 = \C \to 0$, $S_2 = 0 \to \C$. Since $\operatorname{rank}(K(\A)) = 2$ the quadratic conditions \eqref{quadraticCondition} are empty. Setting $p = 3$, a basis $\alpha_i$ for $K(\A)$ satisfying the vanishing conditions \eqref{vanishingCondition} is found by applying Lemma \ref{vanishingLem}. As already observed we may choose $\alpha_1 = [S_1] + [S_2]$, $\alpha_2 = [S_2]$. According to Corollary \ref{simpleStokesCor} we have
\begin{align*}
\calS_{12} &= I - (-1)^{\bra \alpha_1, \alpha_2\ket} \bra \alpha_1, \alpha_2\ket \dt_{\A}(\alpha_1 - \alpha_2, Z) {\bf s}^{\alpha_1 - \alpha_2} E_{12}\\
&= I - (-1)^{\bra [S_1], [S_2] \ket} \bra [S_1], [S_2] \ket s_1 E_{12}\\
&= I - s_1E_{12}. 
\end{align*}
This example can be readily adapted to the generalised Kronecker quiver $K_{\lambda}$ with $\lambda > 1$ arrows $\xymatrix{ \bullet \ar^{\lambda}[r] & \bullet}$, giving
\begin{equation*}
\calS_{12} = I + (-1)^{\lambda} \lambda s_1 E_{12}.
\end{equation*}
\end{exm}
\begin{exm}\label{A3} Suppose $\A = \A(A_3, 0)$ where $A_3 = \bullet \to \bullet \to \bullet$. The simple objects are $S_1 = \C \to 0 \to 0$, $S_2 = 0 \to \C \to 0$, $S_3 = 0 \to 0 \to \C$. Let $E = \C \to \C \to 0$ be the unique extension between $S_1$ and $S_2$. In particular $S_2$ is a subrepresentation of $E$. We choose the basis $\alpha_1 = [S_1] + [S_2] + [S_3]$, $\alpha_2 = [S_2] + [S_3]$, $\alpha_3 = [S_3]$ so by Lemma \ref{vanishingLem} the vanishing conditions \eqref{vanishingCondition} for $p = 3$ are satisfied. The quadratic conditions \eqref{quadraticCondition} also hold since $\bra \alpha_i, \alpha_j \ket = -1$ for all $i < j$. According to Corollary \ref{simpleStokesCor} we have
\begin{align*}
\calS_{12} &= I - (-1)^{\bra [S_1], [S_2]\ket} \bra [S_1], [S_2]\ket s_1 E_{12},\\
&= I - s_1 E_{12},\\ 
\calS_{23} &= I - (-1)^{\bra [S_2], [S_3]\ket} \bra [S_2], [S_3]\ket s_2 E_{23}\\
&= I - s_2 E_{23}
\end{align*}
and 
\begin{align*}
\calS_{13}(Z) &= I - (-1)^{\bra [S_2], [S_3] \ket} \bra [S_2], [S_3] \ket \dt_{\A}([S_1] + [S_2], Z) s_1 s_2 E_{13}\\
& = I - \dt_{\A}([S_1] + [S_2], Z) s_1 s_2 E_{13}.  
\end{align*}
More generally for the quiver $\xymatrix{\bullet\ar^{\lambda}[r] & \bullet\ar^{\lambda}[r] & \bullet}$ we find
\begin{align*}
\calS_{12} &= I + (-1)^{\lambda} \lambda s_1 E_{12},\\
\calS_{23} &= I + (-1)^{\lambda} \lambda s_2 E_{23}\\
\calS_{13}(Z) &= I + (-1)^{\lambda} \lambda \dt_{\A}([S_1] + [S_2], Z) s_1 s_2 E_{13}.
\end{align*}
\end{exm}

There is an analogue of Lemma \ref{StokesLem} which holds for any $p\geq 3$. This follows at once from the explicit formulae proved in \cite[Section 4.5]{bt_stokes}.
\begin{lem}\label{StokesLem_p}
The generalised monodromy of $\nabla^{\zeta}_{{\bf s}, p}(Z)$ is given by the Stokes rays $\ell_{ij}(Z) = \R_{> 0}  Z(\alpha_i - \alpha_j)  \subset \C^*$, for $i \neq j$, together with the corresponding Stokes factors $\calS_{\ell_{ij}}(Z)$ which are the reduction modulo $({\bf s})^p$ of
	\begin{align}
	I -& 2\pi i(\V^{\zeta}_{{\bf s},p})_{ij} E_{ij} +\notag\\
	-& \sum_{m\geq 1}\sum_{k_1\neq\dots\neq k_m} M_{m+1}(Z(\alpha_i - \alpha_{k_1}), \dots, Z(\alpha_{k_m} - \alpha_j)) \notag\\
	&(\V^{\zeta}_{{\bf s},p})_{i k_1 } \cdots (\V^{\zeta}_{{\bf s},p})_{k_m j} E_{ij},
	\end{align}
where $M_{m+1}: (\C^*)^{m+1} \to \C$ are the iterated integrals defined in \cite[Definition 4.4]{bt_stokes}.
\end{lem}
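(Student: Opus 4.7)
The plan is to deduce the lemma as a direct application of the general Stokes data formula of \cite[Section 4.5]{bt_stokes} to our family $\nabla^{\zeta}_{{\bf s}, p}(Z)$, in exactly the same way as Lemma \ref{StokesLem} but keeping all terms in the iterated-integral expansion rather than truncating after the quadratic order. By Corollary \ref{approxFrobType} the flatness conditions hold as identities modulo $({\bf s})^p$, so the family $\nabla^{\zeta}_{{\bf s}, p}(Z)$ is isomonodromic on $U$ and it suffices to compute its Stokes data at a generic $Z$.

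First I would record the irregular data at $z=0$. Since $\U|_{K(\zeta)}$ acts diagonally on the basis $x_{\alpha_1},\ldots,x_{\alpha_n}$ of $K(\zeta)$ with eigenvalues $Z(\alpha_i)$, for $Z$ generic (so that the values $Z(\alpha_i - \alpha_j)$ are pairwise distinct, as already assumed in Lemma \ref{StokesLem}) the Stokes rays of the order-two pole at $0$ are precisely $\ell_{ij}(Z) = \R_{>0}\, Z(\alpha_i - \alpha_j)$, for $i\neq j$, and the $(i,j)$-Stokes factor affects only the elementary component $E_{ij}$. Next, I would substitute into the formula of \cite[Section 4.5]{bt_stokes} the explicit matrix of $\V^{\zeta}_{{\bf s}, p}$ in this trivialisation, namely the reduction modulo $({\bf s})^p$ of
\[(\V^{\zeta}_{\bf s})_{kl} = (-1)^{\bra \alpha_k, \alpha_l \ket}\bra \alpha_k, \alpha_l\ket\, f^{\alpha_k - \alpha_l}_{\bf s}(Z),\]
given by \eqref{Vzeta}. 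The cited theorem then produces the stated expansion of $\calS_{\ell_{ij}}(Z)$ as a sum over chains of intermediate indices $i \to k_1 \to \cdots \to k_m \to j$ with consecutive labels distinct, weighted by the iterated integrals $M_{m+1}$ of \cite[Definition 4.4]{bt_stokes}.

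The only point that needs genuine verification is that the formula makes sense modulo $({\bf s})^p$, i.e.\ that the sum over $m$ truncates. This follows from the length grading: by the definition \eqref{fFunFps} of $f^{\alpha_k - \alpha_l}_{\bf s}(Z)$, the monomial factor ${\bf s}^{[\alpha_k - \alpha_l]_+ - [\alpha_k - \alpha_l]_-}$ forces each off-diagonal entry $(\V^{\zeta}_{\bf s})_{kl}$ to lie in the ideal $({\bf s})^{\len(\alpha_k - \alpha_l)}\subset ({\bf s})$. Hence any product $(\V^{\zeta}_{{\bf s}})_{i k_1} \cdots (\V^{\zeta}_{{\bf s}})_{k_m j}$ of $m+1$ off-diagonal factors lies in $({\bf s})^{m+1}$, and therefore vanishes modulo $({\bf s})^p$ as soon as $m+1 \geq p$. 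In particular only finitely many terms contribute, and the formula of \cite[Section 4.5]{bt_stokes} reduces to a finite sum in $\mathfrak{gl}(K(\zeta)\pow{\bf s}/({\bf s})^p)$, matching the statement.

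The main potential obstacle is bookkeeping: ensuring that \cite[Section 4.5]{bt_stokes} applies verbatim to our bundle $P$, whose fibre is the complex affine algebraic group $GL(K(\zeta)\pow{\bf s}/({\bf s})^p)$. The truncation modulo $({\bf s})^p$ is designed precisely to make this fibre a finite-dimensional algebraic group (an extension of the diagonal $GL(K(\zeta))$ by a unipotent group), and isomonodromy, established via the connection on $U\times \PP^1$ in the proof just before Lemma \ref{StokesLem}, guarantees that the Stokes data computed at a generic $Z$ propagate to all of $U$. Once these formal checks are in place, the lemma follows without further computation.
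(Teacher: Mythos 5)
Your proposal is correct and follows essentially the same route as the paper: the paper's proof of both Lemma \ref{StokesLem} and its order-$p$ analogue is precisely an application of the Theorem in \cite[Section 4.5]{bt_stokes} to the bundle $P$, with the explicit matrix \eqref{Vzeta} substituted in. Your additional observation that each off-diagonal entry of $\V^{\zeta}_{\bf s}$ lies in $({\bf s})^{\len(\alpha_k-\alpha_l)}\subset({\bf s})$, so the sum over $m$ truncates at $m+1<p$, is a correct and worthwhile elaboration of why the formula is well defined in $GL(K(\zeta)\pow{\bf s}/({\bf s})^p)$.
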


In the rest of this section we let $\calS_{\ell}(Z)$ denote the matrices of Lemma \ref{StokesLem_p} corresponding to a choice of central charge $Z$. 
\begin{cor} Let $V \subset \C^*$ be a convex open sector. 
\begin{enumerate}
\item[$\bullet$] The clockwise ordered product
\begin{equation*}
\prod^{\to}_{\ell \subset V} \calS_{\ell}(Z) \in GL(K(\zeta)\pow{\bf s}/{(\bf s)}^p)
\end{equation*}
is constant as a function of $Z$ as long as the rays $\ell(Z)$ do not cross $\del V$. 
\item[$\bullet$] The Stokes multiplier of the connection $\nabla^{\zeta}_{{\bf s}, p}(Z)$ with respect to the admissible ray $\R_{> 0}$, which by definition is given by the clockwise ordered product
\begin{equation}\label{StokesMult}
\calS = \prod^{\to}_{\ell \subset \bar{\calH}} \calS_{\ell}(Z) \in GL(K(\zeta)\pow{\bf s}/{(\bf s)}^p),
\end{equation}
is in fact constant as a function of $Z \in U \subset \stab(\A)$.
\end{enumerate}
\end{cor}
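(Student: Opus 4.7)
The plan is to derive both claims from the isomonodromy of the family $\nabla^{\zeta}_{{\bf s}, p}(Z)$ on the principal bundle $P$, which has just been established via flatness of the augmented connection on $U \times \PP^1$, combined with the standard Stokes theory for irregular singularities of rank two developed in \cite{bt_stokes}.

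For the first bullet, I would argue as follows. By isomonodromy, the Stokes data of the family is preserved; concretely this means that each individual Stokes factor $\calS_{\ell}(Z)$ is locally constant on the open stratum of $U$ on which no two Stokes rays coincide. When two Stokes rays $\ell_{ik}(Z)$ and $\ell_{kj}(Z)$ collide inside the open sector $V$ as $Z$ crosses an internal wall, the factors $\calS_{ik}, \calS_{ij}, \calS_{kj}$ jump individually, but by the Kontsevich--Soibelman and Joyce wall-crossing formulae the clockwise-ordered product $\prod^{\to}_{\ell \subset V} \calS_{\ell}(Z)$ is preserved across the collision. Thus the ordered product is locally constant across every such internal wall; the only way it can genuinely change is if a Stokes ray enters or leaves $V$ through $\partial V$, which by hypothesis is excluded. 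Continuity in $Z$ then promotes local to global constancy on the relevant open set.

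For the second bullet, I apply the first bullet to the sector $V = \bar{\calH}$. Its boundary in $\C^*$ consists of $\R_{>0}$, which does not lie in $\bar{\calH}$, and $\R_{<0}$, which does. The antipodal symmetry $\ell_{ij}(Z) = -\ell_{ji}(Z)$ of the Stokes rays is crucial here: if some $\ell_{ij}(Z)$ were to meet $\R_{<0}$, then the opposite ray $\ell_{ji}(Z)$ would simultaneously meet $\R_{>0}$, contradicting the admissibility of $\R_{>0}$ that is implicit in the definition of the Stokes multiplier. Symmetrically, a crossing of $\R_{>0}$ by $\ell_{ij}$ is excluded outright. Hence no Stokes ray ever crosses $\partial \bar{\calH}$ as $Z$ varies in the admissible locus of $U$, and applying the first bullet with $V = \bar{\calH}$ yields that $\calS$ is constant on $U$.

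The main obstacle in turning this plan into a proof is the careful invocation of the piece of Stokes theory that links the isomonodromy condition (flatness of the augmented connection on $U \times \PP^1$) to the preservation of ordered products of Stokes factors across coincidence walls, including the Kontsevich--Soibelman identity governing the jumps of the individual $\calS_{\ell}$. This is well-documented for rank-two irregular connections in \cite[Section 4]{bt_stokes}, so once it is cited correctly the remainder of the argument is purely a topological bookkeeping of how the rays $\ell_{ij}(Z) = \R_{>0} Z(\alpha_i - \alpha_j)$ move with $Z$.
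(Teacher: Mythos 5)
Your argument is correct and is essentially the paper's own proof unpacked: the paper disposes of both bullets in one line by citing the standard characterisation of isomonodromy in terms of Stokes data, namely \cite[Sections 2.7, 2.8]{bt_stokes} (local constancy of individual factors plus invariance of the clockwise product over a convex sector when rays collide internally), which is exactly the content you spell out, and your antipodal-ray observation for the second bullet is the right way to see that no ray can cross $\del\bar{\calH}$ on the admissible locus. The only quibble is one of attribution: the invariance of the ordered product across ray collisions is here a consequence of the isomonodromy already established for $\nabla^{\zeta}_{{\bf s},p}(Z)$ (the general theory of \cite{bt_stokes}), rather than an input from the Kontsevich--Soibelman/Joyce wall-crossing formulae, which are a special instance of that invariance rather than its source.
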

\begin{proof} Both statements are well-known characterisations of isomonodromy, see e.g. \cite[Sections 2.7, 2.8]{bt_stokes}.
\end{proof}
\begin{definition}\label{StokesMatrix} The \emph{canonical lift} $\tilde{\calS}^0$ of the Stokes multiplier $\calS$ given by \eqref{StokesMult} to $GL(K(\zeta)[{\bf s}])$ is $\calS(Z)$ regarded as an element of $GL(K(\zeta)[{\bf s}])$. Note that we have $\tilde{\calS}^0 |_{{\bf s} = 0} = I$.  
\end{definition}
\begin{exm}\label{A2Stokes} In Example \ref{A2} we find 
\begin{equation*}
\tilde{\calS}^0 = \calS_{12} = \left(\begin{matrix} 1 & (-1)^{\lambda} \lambda s_1 \\ & 1\end{matrix}\right).
\end{equation*}
Evaluating at the special point ${\bf s}_J$ gives the Cartan matrix of the underlying quiver. The $A_2$ case corresponds to $\lambda = 1$ and the matrix
\begin{equation*}
\left(\begin{matrix} 1 & -1 \\ & 1\end{matrix}\right).
\end{equation*}
\end{exm}
\begin{exm}\label{A3Stokes} Similarly in Example \ref{A3} we can compute $\tilde{\calS}^0$ assuming $E$ is unstable, so
\begin{align*}
\tilde{\calS}^0 &= \calS_{23} \calS_{13} \calS_{12}\\
&= \Big(I + (-1)^{\lambda} \lambda s_2 E_{23}\Big)\\
&\quad\,\, \Big(I + (-1)^{\lambda} \lambda  s_1 E_{12}\Big)\\
&=\left(\begin{matrix} 
1&  (-1)^{\lambda}\lambda s_1& 0\\
  & 1 & (-1)^{\lambda} \lambda s_2 \\
  & & 1
\end{matrix}\right).
\end{align*}
In order to check that this agrees with the calculation when $E$ is stable we need to recall the well-known fact that in this case
\begin{equation*}
\dt_{\A}\big([S_1] + [S_2], Z\big) = (-1)^{\lambda-1} \chi(\PP^{\lambda - 1}) = - (-1)^{\lambda} \lambda.
\end{equation*}
from which
\begin{align*}
\tilde{\calS}^0 &= \calS_{12} \calS_{13} \calS_{23}\\
&= \Big(I + (-1)^{\lambda} \lambda s_1 E_{12}\Big)\\
&\quad\,\, \Big(I - (-1)^{2\lambda} \lambda^2 s_1 s_2 E_{13})\\
&\quad\,\, \Big(I + (-1)^{\lambda} \lambda s_2 E_{23}\Big)\\
&=\left(\begin{matrix} 
1& (-1)^{\lambda}\lambda s_1& 0\\
  & 1 & (-1)^{\lambda} \lambda s_2 \\
  & & 1
\end{matrix}\right).
\end{align*} 
Evaluating at the special point ${\bf s}_J$ gives the Cartan matrix of the underlying quiver. The $A_3$ case corresponds to $\lambda = 1$ and the matrix
\begin{equation*}
\left(\begin{matrix} 
1& -1& 0\\
  & 1 & -1 \\
  & & 1
\end{matrix}\right).
\end{equation*}
\end{exm}
\begin{exm} We can repeat the calculations in Examples \ref{A2}, \ref{A3} for the same quivers with \emph{opposite} orientations. Alternatively we may think of this as computing for the same orientation with an opposite choice of triangular basis $\alpha_i = \sum^{n+1 - i}_{r = 1} [S_r]$ (i.e. a different section $\zeta$). The upshot in any case is respectively
\begin{equation*}
\tilde{\calS}^0 = \left(\begin{matrix} 1 & -(-1)^{\lambda}\lambda s_1 \\ & 1\end{matrix}\right), \quad \tilde{\calS}^0 = \left(\begin{matrix} 
1& -(-1)^{\lambda}\lambda s_1& -(-1)^{\lambda}\lambda^2 s_1 s_2\\
  & 1 & -(-1)^{\lambda}\lambda s_2 \\
  & & 1
\end{matrix}\right).
\end{equation*}
\end{exm}
There is also a finite set of non-canonical, but natural lifts of the Stokes matrix. To define these we lift each Stokes factor $\calS_{\ell}(Z) \in GL(K(\zeta)\pow{\bf s}/{(\bf s)}^p)$ trivially to $\tilde{\calS}_{\ell}(Z) \in GL(K(\zeta)[{\bf s}])$, and take the product of lifts 
\begin{equation}\label{naturalLift}
\prod^{\to}_{\ell \subset \bar{\calH}} \tilde{\calS}_{\ell}(Z) \in GL(K(\zeta)[{\bf s}]).
\end{equation} 
We regard \eqref{naturalLift} as a function of $Z$. It is constant modulo $({\bf s})^p$, but the higher order terms can take finitely many distinct values on different chambers of $\stab(\A)$. 
\begin{definition}\label{naturalLiftDef}
A \emph{natural lift} $\tilde{\calS}$ of the Stokes multiplier $\calS$ given by \eqref{StokesMult} to $GL(K(\zeta)[{\bf s}])$ is either the canonical lift $\tilde{\calS}^0$, or one of the finitely many possible values of the product \eqref{naturalLift}. Note that all the natural lifts agree modulo $({\bf s})^p$.
\end{definition}
\begin{exm}\label{A4Lift} Let $\A = \A(A_4, 0)$ where $A_4 = \bullet \to \bullet \to \bullet \to \bullet$. We work with $p = 3$. Choose the triangular basis $\alpha_i = \sum^4_{r = i} [S_r]$. This is good since $\bra \alpha_i, \alpha_j \ket = -1$ for all $i < j$. Two possible determinations of the product \eqref{naturalLift} in different chambers are
\begin{align*}
\tilde{\calS}_{34} \tilde{\calS}_{23} \tilde{\calS}_{12} &= (I - s_3 E_{34})(I-s_2 E_{23})(I - s_1E_{12}) = \left(\begin{matrix} 
1 & - s_1 & 0 & 0 \\
   & 1  & -s_2 & 0 \\
   &     & 1& - s_3 \\
   &   &     & 1
\end{matrix}\right)  
\end{align*}
and 
\begin{align*}
&\tilde{\calS}_{12} \tilde{\calS}_{13} \tilde{\calS}_{23} \tilde{\calS}_{24} \tilde{\calS}_{34}\\
&= (I - s_1 E_{12})(I - s_1 s_2 E_{13})(I - s_2 E_{23})( I - s_2 s_3 E_{24})(I - s_3 E_{34})\\
&=\left(\begin{matrix}
1 & - s_1 & 0 & s_1 s_2 s_3 \\
   & 1  & -s_2 & 0 \\
   &     & 1& - s_3 \\
   &   &     & 1
\end{matrix}
\right).
\end{align*}
Indeed one can check that these are the only values of \eqref{naturalLift} on $\stab(\A)$.
\end{exm}
\begin{prop} For fixed $Z$ and sufficiently small $|| {\bf s}||$ there is a canonical choice of a connection $\tilde{\nabla}^{\zeta}_{\bf s}(Z)$ on the trivial principal $GL(K(\zeta))$-bundle, of the form
\begin{equation*} 
\tilde{\nabla}^{\zeta}_{\bf s}(Z) = d + \Big( \frac{\U(Z)}{z^2} - \frac{\tilde{\V}^{\zeta}_{\mathbf{s}}(Z)}{z} \Big) d z
\end{equation*}
with Stokes multiplier with respect to the admissible ray $\R_{> 0}$ given by the canonical lift $\tilde{\calS}^0$. The connection matrix $\tilde{\V}^{\zeta}_{\mathbf{s}}(Z)$ is skew-symmetric and depends holomorphically on both $Z$ and ${\bf s}$. The reduction of $\tilde{\nabla}^{\zeta}_{\bf s}(Z)$ modulo $({\bf s})^p$ is $\nabla^{\zeta}_{{\bf s}, p}(Z)$. The same holds for any other choice of a natural lift $\tilde{\calS}$.
\end{prop}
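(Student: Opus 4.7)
The strategy is to invert the Stokes map. Standard Riemann--Hilbert--Birkhoff theory for meromorphic connections on $\PP^1$ with pole divisor $2\cdot 0 + 1\cdot \infty$ and fixed irregular type at $0$ with distinct eigenvalues (our setting, since $\U(Z)$ is generic) provides an open neighbourhood of the identity in the Stokes multiplier space and an open neighbourhood of $0$ in the space of connection matrices $\V$ which are put in bijection, holomorphically in both directions, by sending a Birkhoff normal form $d+(\U(Z)/z^2-\V/z)dz$ to its Stokes multiplier. See e.g.\ \cite{bt_stokes} for this framework.

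First I would set up the inversion. At ${\bf s}=0$ all the Stokes factors in Lemma \ref{StokesLem_p} reduce to the identity since $\V^{\zeta}_{{\bf s},p}$ vanishes there, so $\tilde{\calS}^0\big|_{{\bf s}=0}=I$. For $\|{\bf s}\|$ sufficiently small the canonical lift $\tilde{\calS}^0$ therefore sits in the neighbourhood of $I$ on which the inverse Stokes map is defined, and its image furnishes the desired $\tilde{\V}^{\zeta}_{\bf s}(Z)$, depending holomorphically on $Z$ through the eigenvalues $Z(\alpha_i)$ of $\U$ and on ${\bf s}$ through the polynomial dependence of $\tilde{\calS}^0$. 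The reductions modulo $({\bf s})^p$ of $\tilde{\nabla}^{\zeta}_{\bf s}(Z)$ and of $\nabla^{\zeta}_{{\bf s},p}(Z)$ are then both connections on $P$ in Birkhoff normal form with the same irregular type and identical Stokes multiplier $\calS$, so by the uniqueness part of the inverse theorem, now applied in the finite-dimensional quotient, they must coincide. For a natural lift $\tilde{\calS}$ as in Definition \ref{naturalLiftDef} the same argument applies verbatim: it is a polynomial matrix in ${\bf s}$ equal to $I$ at ${\bf s}=0$ and to $\tilde{\calS}^0$ modulo $({\bf s})^p$, hence it lies in the inversion neighbourhood for $\|{\bf s}\|$ small.

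The main obstacle is the skew-symmetry of $\tilde{\V}^{\zeta}_{\bf s}(Z)$. On the Stokes side, $g$-compatibility of the connection matrix corresponds to an algebraic identity relating $\tilde{\calS}$, its $g$-transpose, and a formal monodromy around $0$. This identity is known to hold modulo $({\bf s})^p$ by Lemma \ref{additionalFlatness}, but one needs it \emph{exactly}, as a polynomial identity in ${\bf s}$, in order to deduce via uniqueness in the inverse problem that $\tilde{\V}^{\zeta}_{\bf s}(Z)$ lies in the skew-symmetric locus. I expect this to follow from a direct inspection of Lemma \ref{StokesLem_p}: each individual Stokes factor is built from the skew-symmetric coefficients $(\V^{\zeta}_{{\bf s},p})_{ij}$ of \eqref{Vzeta} together with the iterated integrals $M_{m+1}$, and one checks that each factor individually already satisfies the required $g$-compatibility, so the same holds for the ordered product defining $\tilde{\calS}^0$ (and for every natural lift, since these are also ordered products of the same factors). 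Once exact $g$-compatibility of $\tilde{\calS}^0$ is established, the uniqueness clause of the Riemann--Hilbert--Birkhoff correspondence forces $\tilde{\V}^{\zeta}_{\bf s}(Z)$ to be skew-symmetric, completing the proof.
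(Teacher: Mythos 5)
Your proposal is correct and follows essentially the same route as the paper, whose entire proof is to invoke the inverse-problem theorem of Bridgeland--Toledano-Laredo (\cite[Section 4.8]{bt_stokes}) applied to the bundle $P$: that theorem supplies exactly what you reconstruct, namely the holomorphic inversion of the Stokes map near the identity and the uniqueness statement you use to identify the reduction modulo $({\bf s})^p$ with $\nabla^{\zeta}_{{\bf s},p}(Z)$. One small caveat on your skew-symmetry discussion: the claim that each individual Stokes factor is $g$-compatible is not literally true (a unipotent factor $I + cE_{ij}$ is not $g$-orthogonal); the relevant exact symmetry instead pairs the factors attached to opposite rays $\ell_{ij}$ and $\ell_{ji}=-\ell_{ij}$ via $g$-transposition, coming from the skew-symmetry of \eqref{Vzeta}, and it is this relation together with uniqueness in the inverse problem that forces $\tilde{\V}^{\zeta}_{\bf s}(Z)$ to be skew-symmetric.
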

\begin{proof} The result follows immediately from the Theorem in \cite[Section 4.8]{bt_stokes}.  
\end{proof}

\begin{definition} The canonical lift of the approximate Frobenius type structure $(\nabla^{r, \zeta}_{\bf s}, C|_{K(\zeta)}, \U|_{K(\zeta)}, \V^{\zeta}_{\bf s}, g|_{K(\zeta)})$ is defined as the collection of holomorphic objects
\begin{enumerate}
\item[$\bullet$] the connection $\nabla^r$ given by 
\begin{equation*}
\tilde{\nabla}^{r, \zeta}_{\bf s} = d + \tilde{A}
\end{equation*}
with connection form $\tilde{A}_{i j} = (\tilde{\V}^{\zeta}_{\bf s})_{i j}\,d \log Z(\alpha_i - \alpha_j)$,
\item[$\bullet$] the Higgs field $C$, endomorphism $\U$ and metric $g$ given by the restrictions $C|_{K(\zeta)}, \U|_{K(\zeta)}, g|_{K(\zeta)}$,
\item[$\bullet$] the endomorphism $\V$ given by $\tilde{\V}^{\zeta}_{\bf s}$. 
\end{enumerate}
Of course one can give an identical definition for any other choice of a natural lift $\tilde{\calS}$.
\end{definition}
\begin{cor}\label{polyLiftCor} The collection $(\tilde{\nabla}^{r, \zeta}_{\bf s}, C|_{K(\zeta)}, \U|_{K(\zeta)}, \tilde{\V}^{\zeta}_{\bf s}, g|_{K(\zeta)})$ is a Frobenius type structure on the bundle $K(\zeta) \to U$, depending holomorphically on ${\bf s}$ for $|| {\bf s}||$ sufficiently small. The same holds for any other choice of a natural lift $\tilde{\calS}$.
\end{cor}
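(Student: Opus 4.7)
The plan is to split the Frobenius type axioms into two groups: the purely algebraic conditions together with the metric compatibilities, and the four differential equations in \eqref{FrobTypeCond1} together with flatness $F(\tilde{\nabla}^{r,\zeta}_{\bf s}) = 0$. The first group I would verify directly in the local frame $x_{\alpha_1}, \ldots, x_{\alpha_n}$ of $K(\zeta)$: both $C|_{K(\zeta)} = -dZ$ and $\U|_{K(\zeta)} = Z$ act diagonally, so $[C|_{K(\zeta)}, \U|_{K(\zeta)}] = 0$ and the symmetry of $C$ and $\U$ with respect to $g|_{K(\zeta)}$ are immediate. Skew-symmetry of $\tilde{\V}^{\zeta}_{\bf s}$ is part of the output of the Riemann--Hilbert-type theorem invoked from \cite[Section 4.8]{bt_stokes}. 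Covariant constancy of $g|_{K(\zeta)}$ with respect to $\tilde{\nabla}^{r,\zeta}_{\bf s}$ follows because the connection matrix
\begin{equation*}
\tilde{A}_{ij} = (\tilde{\V}^{\zeta}_{\bf s})_{ij}\, d\log Z(\alpha_i - \alpha_j)
\end{equation*}
is skew in $i, j$: this combines skew-symmetry of $\tilde{\V}^{\zeta}_{\bf s}$ with the identity $d\log Z(\alpha_i - \alpha_j) = d\log Z(\alpha_j - \alpha_i)$.

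For the second group, the plan is to package \eqref{FrobTypeCond1} together with $F(\tilde{\nabla}^{r,\zeta}_{\bf s})=0$ into the flatness of the single meromorphic connection
\begin{equation*}
\tilde{\nabla}^{r,\zeta}_{\bf s} - \frac{1}{z}\, dZ + \Bigl( \frac{\U|_{K(\zeta)}}{z^2} - \frac{\tilde{\V}^{\zeta}_{\bf s}}{z} \Bigr) dz
\end{equation*}
on the pullback of $K(\zeta)$ to $U \times \PP^1$. This equivalence is standard (see \cite[Section 3.3]{bt_stokes}) and reduces the task to establishing isomonodromy for the family $\tilde{\nabla}^{\zeta}_{\bf s}(Z)$ as $Z$ varies in $U$. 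Isomonodromy is however built into the construction of $\tilde{\V}^{\zeta}_{\bf s}$ via the theorem cited from \cite[Section 4.8]{bt_stokes}, which produces, for $\|{\bf s}\|$ sufficiently small, a family holomorphic in both $Z$ and ${\bf s}$ whose generalised monodromy on the admissible ray $\R_{>0}$ is the prescribed constant element $\tilde{\calS}^0 \in GL(K(\zeta)[{\bf s}])$. Constancy of the Stokes data as $Z$ varies is exactly the isomonodromy condition; hence the extended $\PP^1$-connection is flat.

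The main potential obstacle is ensuring that the $Z$-dependence of $\tilde{\nabla}^{r,\zeta}_{\bf s}$ extracted from isomonodromy takes exactly the prescribed shape $\tilde{A}_{ij} = (\tilde{\V}^{\zeta}_{\bf s})_{ij}\, d\log Z(\alpha_i - \alpha_j)$. This is forced by the $dz\wedge dZ$ components of the curvature of the extended connection: because $\U = Z$ depends linearly on the canonical coordinates $Z(\alpha_i)$, the coefficient of $dz \wedge dZ(\alpha_k)$ in the curvature contains a term $\del_{Z(\alpha_k)} \U / z^2$, which upon balancing against the Higgs field forces the $\nabla^r$-part to have precisely the logarithmic form above. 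Compatibility with the approximate structure of Corollary \ref{approxFrobType} is automatic since reduction modulo $({\bf s})^p$ recovers $\nabla^{r,\zeta}_{{\bf s},p}$ and $\V^{\zeta}_{{\bf s},p}$. The same argument goes through verbatim with any natural lift $\tilde{\calS}$ in place of $\tilde{\calS}^0$, which yields the final assertion of the corollary.
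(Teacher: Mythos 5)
Your proposal is correct and follows essentially the same route as the paper: the differential conditions \eqref{FrobTypeCond1} together with $F(\tilde{\nabla}^{r,\zeta}_{\bf s})=0$ are obtained by packaging them into flatness of the extended connection on $U\times\PP^1$, which is equivalent to the isomonodromy built into the construction of $\tilde{\V}^{\zeta}_{\bf s}$ via \cite[Section 4.8]{bt_stokes}, while the metric and algebraic conditions are checked directly in the frame $x_{\alpha_1},\ldots,x_{\alpha_n}$. Your extra remarks (skewness of $\tilde{A}$ giving $\tilde{\nabla}^{r,\zeta}_{\bf s}(g)=0$, and the logarithmic form of $\tilde{A}$ being forced by the isomonodromic deformation equations) simply make explicit what the paper leaves to the reader.
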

\begin{proof} For fixed ${\bf s}$, with $|| {\bf s} ||$ sufficiently small, the family of connections $\tilde{\nabla}^{\zeta}_{\bf s}(Z)$ has constant generalised monodromy as $Z$ varies in $U$. By a well-known characterisation of isomonodromy (see e.g. \cite[Section 3.3]{bt_stokes}), the family of connections on $P$ pulled back to $U \times \PP^1$
\begin{equation*}
\tilde{\nabla}^{r, \zeta}_{{\bf s}} - \frac{1}{z} dZ + \Big( \frac{\U(Z)}{z^2} - \frac{\tilde{\V}^{\zeta}_{\mathbf{s}}(Z)}{z} \Big) d z
\end{equation*}
is flat. This is equivalent to the equations $F(\tilde{\nabla}^{r, \zeta}_{{\bf s}}) = 0$ and \eqref{FrobTypeCond1}. The conditions on $g$ can be checked directly. 
\end{proof}
\begin{proof}[Proof of Proposition \ref{liftingProp}] This follows at once from Corollary \ref{polyLiftCor}.
\end{proof}
\section{Pullback to Frobenius manifolds}\label{pullbackSection}
Let $K(\zeta) \to U$ the bundle constructed in Section \ref{projectionSec} (where $U \subset \stab{\A}$ denotes an open subset where $-dZ(\zeta)$ is an isomorphism, as usual). Under suitable assumptions Corollary \ref{approxFrobType} endows $K(\zeta)$ with a Frobenius type structure defined modulo terms which lie in $({\bf s})^p$. In this section we fix a natural lift of this jet to a genuine Frobenius type structure, depending holomorphically on ${\bf s}$ in a sufficiently small polydisc $\Delta$ (see Proposition \ref{liftingProp} and Corollary \ref{polyLiftCor}). We denote this lifted, genuine Frobenius type structure, depending holomorphically on the parameters ${\bf s}$, by  
\begin{equation}\label{liftedFrobTypeStr} 
(\tilde{\nabla}^{r, \zeta}_{\bf s}, C|_{K(\zeta)}, \U|_{K(\zeta)}, \tilde{\V}^{\zeta}_{\bf s}, g|_{K(\zeta)}).
\end{equation} 

We briefly recall the main ingredients in a Frobenius manifold, and then apply Theorem \ref{HertThm} to endow $U \subset \stab(\A)$ with the structure of a semisimple Frobenius manifold (with Euler field and flat identity).
\begin{definition}\label{FrobDefinition} A Frobenius manifold is a complex manifold $M$ such that the fibres of the holomorphic tangent bundle $\tg{M}$ are endowed with a commutative, associative product $\circ$. Moreover we assume that there are a unit field $e$, a (Euler) field $E$, and a nondegenerate holomorphic quadratic form $g_M$ on the fibres of $\tg{M}$ (the metric) such that the following conditions hold.
\begin{enumerate}
\item[$\bullet$] The metric $g_M$ is flat. Denote its Levi-Civita connection by $\nabla^{g_M}$.
\item[$\bullet$] Introducing a Higgs field $C^M$ on $M$ by $C^M_X(Y) = - X \circ Y$, we have $\nabla^{g_M} (C^M) = 0$.
\item[$\bullet$] The unit field $e$ is flat, i.e. $\nabla^{g_M}(e) = 0$.
\item[$\bullet$] Taking Lie derivatives along the Euler field we have $\LL_E(\circ) = \circ$ and $\LL_E(g_M) = (2 - d) g_M$ for some $d \in \C$ ($2-d$ is called the conformal dimension of $M$).
\item[$\bullet$] We have $g_M(C^M_X Y, Z) = g_M(Y, C^M_X Z)$, that is the metric is compatible with the multiplication $\circ$.
\end{enumerate}
$M$ is called semisimple if $\circ$ is semisimple. Local coordinates which correspond to a semisimple basis for $\circ$ are known as \emph{canonical coordinates}. 
\end{definition} 
A Frobenius manifold $M$ gives rise to a Frobenius type structure on $\tg{M}$.

\begin{lem}[{\cite[Lemma 5.11]{hert}}]\label{HertLem} Let $M$ be a Frobenius manifold (with flat identity $e$ and Euler field $E$ of conformal dimension $2-d$. Define
\begin{enumerate}
\item[$\bullet$] a Higgs field $C^M$ by $C^M_X(Y) = - X \circ Y$ as above,
\item[$\bullet$] endomorphisms $\U^M$ and $\V^M$ by 
\begin{align*}
\U^M &= E \circ ( \bullet ),\\
\V^M &= \nabla^g_{ \bullet } E - \frac{2 - d}{2} I.
\end{align*}
\end{enumerate}
\noindent Then $(\tg{M}, \nabla^{g_M}, C^M, \U^M, \V^M, g_M)$ is a Frobenius type structure on the fibres of $\tg{M}$.
\end{lem}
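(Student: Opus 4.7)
The plan is to verify in turn each of the seven pieces of data and axioms entering the definition of a Frobenius type structure, matching each to a single axiom of the Frobenius manifold $M$. The whole statement is essentially a translation of Dubrovin's axioms into Hertling's packaging.

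First, I would collect the easy items. Flatness of $\nabla^{g_M}$ and the parallelism $\nabla^{g_M} g_M = 0$ are the Levi-Civita conditions, and the Higgs field identity $C^M \wedge C^M = 0$ is immediate from commutativity and associativity of $\circ$, since $C^M_X C^M_Y Z = X \circ Y \circ Z$ is symmetric in $X,Y$. The potentiality axiom of $M$ gives $\nabla^{g_M}(C^M) = 0$. Associativity yields $[C^M, \U^M] = 0$, because $X \circ (E \circ Y) = E \circ (X \circ Y)$. The symmetry of $C^M$ and $\U^M$ with respect to $g_M$ is the axiom of compatibility of the metric with $\circ$.

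Next I would handle $\V^M$. Using torsion-freeness of $\nabla^{g_M}$ together with $\nabla^{g_M} g_M = 0$, the Lie derivative axiom $\LL_E(g_M) = (2-d) g_M$ unpacks to
\begin{equation*}
g_M(\nabla^{g_M}_X E, Y) + g_M(X, \nabla^{g_M}_Y E) = (2-d)\, g_M(X,Y),
\end{equation*}
and subtracting $\tfrac{2-d}{2}\, g_M$ from each term on the left gives skew-symmetry of $\V^M$. The identity $\nabla^{g_M}(\V^M) = 0$ reduces to $\nabla^{g_M}(\nabla^{g_M}_\bullet E) = 0$, i.e. that $E$ is affine-linear in flat coordinates, which is part of the definition of the Euler field.

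The one computation requiring a little work, and which I would treat as the main step, is
\begin{equation*}
\nabla^{g_M}(\U^M) - [C^M, \V^M] + C^M = 0.
\end{equation*}
Using $\nabla^{g_M}(C^M) = 0$ to differentiate the product $E \circ Y$ one finds $(\nabla^{g_M}_X \U^M)(Y) = (\nabla^{g_M}_X E) \circ Y$, and a short direct expansion gives $[C^M,\V^M]_X(Y) = -X\circ (\nabla^{g_M}_Y E) + \nabla^{g_M}_{X\circ Y} E$ (the $\tfrac{2-d}{2}\,I$ pieces cancelling against each other). The identity therefore reduces to
\begin{equation*}
(\nabla^{g_M}_X E)\circ Y + X\circ (\nabla^{g_M}_Y E) - \nabla^{g_M}_{X\circ Y} E = X\circ Y,
\end{equation*}
which I would verify to be exactly the axiom $\LL_E(\circ) = \circ$ once one rewrites the Lie brackets via $[E,Z] = \nabla^{g_M}_E Z - \nabla^{g_M}_Z E$ and cancels the $\nabla^{g_M}_E$ terms using $\nabla^{g_M}(C^M) = 0$ one more time. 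This is the only step where more than one axiom is combined, so it is the natural place to concentrate the argument; the remaining items are one-line verifications.
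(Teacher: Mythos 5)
Your verification is correct. The paper itself gives no proof of this lemma --- it is quoted verbatim from Hertling \cite[Lemma 5.11]{hert} --- and your argument is the standard one: each axiom of the Frobenius type structure is matched to one axiom of the Frobenius manifold, with the only nontrivial combination occurring in $\nabla^{g_M}(\U^M) - [C^M,\V^M] + C^M = 0$, which you correctly reduce to $\LL_E(\circ)=\circ$ via $\nabla^{g_M}(C^M)=0$. The one point to tighten is your claim that $\nabla^{g_M}(\nabla^{g_M}_{\bullet}E)=0$ is ``part of the definition of the Euler field'': in the paper's stated definition of a Frobenius manifold it is not, but it does follow, since differentiating the identity $g_M(\nabla^{g_M}_X E, Y)+g_M(X,\nabla^{g_M}_Y E)=(2-d)\,g_M(X,Y)$ along flat fields shows that the trilinear form $g_M(\nabla^{g_M}_Z\nabla^{g_M}_X E, Y)$ is symmetric in $Z,X$ (by flatness of $\nabla^{g_M}$) and skew in $X,Y$, hence vanishes; alternatively, Dubrovin's definition, which the paper invokes, explicitly requires $E$ to be affine in flat coordinates.
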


Suppose $(\nabla^r, C, \U, \V, g)$ is a Frobenius type structure on an auxiliary bundle $K \to M$ with a section $\zeta$. Denote by $v$ the natural morphism $ - C_{\bullet}(\zeta)\!: \tg{M} \to K$. Theorem \ref{HertThm} can then be restated by saying that, under the assumptions spelled out in the Theorem, the pullbacks 
\begin{align*}
&\nabla^{r, M} = v^{-1} \nabla^r v, \quad C^M = v^{-1} C v,\\
&\quad \U^M = v^{-1} \U v, \quad \V^M = v^{-1} \V v,\\
&\quad \quad \quad g_M = g( v( - ), v( - ))
\end{align*} 
define a Frobenius type structure on the tangent bundle $\tg{M}$, and moreover that this Frobenius type structure comes from a genuine Frobenius manifold as in Lemma \ref{HertLem}. According to \cite[Lemma 4.1]{hert}  the multiplication $\circ$ on $\tg{M}$ is given by
\begin{equation*}
X \circ Y = - C^M_X Y = v^{-1} (C_{X} v(Y)),
\end{equation*}
while the flat identity $e$ is uniquely defined by requiring 
\begin{equation*}
C_e = - I. 
\end{equation*}
Note that the multiplication $\circ$ is uniquely characterised by the property
\begin{equation*}
C_X C_Y = - C_{X \circ Y}.  
\end{equation*}
As explained in the statement of \cite[Theorem 5.12]{hert} and its proof, the flat identity is in fact given by
\begin{equation*}
e = v^{-1}(\zeta),
\end{equation*}
while the Euler field is
\begin{equation*}
E = \U(e).
\end{equation*}
It satisfies $\LL_E(g_M) = (2 - d) g_M$, where $d$ is the constant in Theorem \ref{HertThm}.

We now turn to our Frobenius type structure \eqref{liftedFrobTypeStr} (depending holomorphically on ${\bf s} \in \Delta$). Recall that it depends on $\zeta$ only through the choice of basis $\alpha_i$ for $K(\A) \otimes \R$ (see Remark \ref{zetaDependence}). 
\begin{lem} The endomorphism $\tilde{\V}^{\zeta}_{\bf s}$ acts on the space of flat sections of $\tilde{\nabla}^{r, \zeta}_{\bf s}$ on $U \subset \stab(\A)$. The spectrum of $\tilde{\V}^{\zeta}_{\bf s}$ is constant on $U$, i.e. in the $Z$ direction (it depends highly nontrivially on ${\bf s} \in \Delta$). 
\end{lem}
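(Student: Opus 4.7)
The plan is to exploit the Frobenius type structure axioms \eqref{FrobTypeCond1} for the lifted structure \eqref{liftedFrobTypeStr}. By Corollary \ref{polyLiftCor} this is a \emph{genuine} Frobenius type structure on $K(\zeta)\to U$, so all of the identities in \eqref{FrobTypeCond1} hold as exact equalities of holomorphic objects, not merely modulo $({\bf s})^p$. The key input will be the flatness condition
\begin{equation*}
\tilde{\nabla}^{r, \zeta}_{\bf s}(\tilde{\V}^{\zeta}_{\bf s}) = 0,
\end{equation*}
together with the flatness of the connection itself, $F(\tilde{\nabla}^{r, \zeta}_{\bf s}) = 0$.

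For the first assertion I would simply invoke the Leibniz rule. Let $\sigma$ be any local flat section of $\tilde{\nabla}^{r, \zeta}_{\bf s}$. Then
\begin{equation*}
\tilde{\nabla}^{r, \zeta}_{\bf s}\big(\tilde{\V}^{\zeta}_{\bf s}(\sigma)\big) = \big(\tilde{\nabla}^{r, \zeta}_{\bf s}(\tilde{\V}^{\zeta}_{\bf s})\big)(\sigma) + \tilde{\V}^{\zeta}_{\bf s}\big(\tilde{\nabla}^{r, \zeta}_{\bf s}(\sigma)\big) = 0,
\end{equation*}
so $\tilde{\V}^{\zeta}_{\bf s}(\sigma)$ is flat as well. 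Hence $\tilde{\V}^{\zeta}_{\bf s}$ preserves the local system of flat sections of $\tilde{\nabla}^{r, \zeta}_{\bf s}$.

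For the second assertion I would use that $\tilde{\nabla}^{r, \zeta}_{\bf s}$ is flat. On any simply connected open subset $V \subset U$ we can then pick a frame $(\sigma_1, \ldots, \sigma_n)$ of flat sections of $K(\zeta)|_V$. In this frame the endomorphism $\tilde{\V}^{\zeta}_{\bf s}$ is represented by some matrix $M(Z,{\bf s})$ and the identity $\tilde{\nabla}^{r, \zeta}_{\bf s}(\tilde{\V}^{\zeta}_{\bf s}) = 0$ becomes $d_Z M(Z, {\bf s}) = 0$. Thus $M$ is independent of $Z$ on $V$. A change of flat frame, or the monodromy of $\tilde{\nabla}^{r, \zeta}_{\bf s}$ when passing between simply connected subsets, acts on $M$ by conjugation, so the spectrum is an invariant of ${\bf s}$ alone and is globally well-defined on $U$.

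The only conceptual obstacle is that one really needs the \emph{exact} identity $\tilde{\nabla}^{r, \zeta}_{\bf s}(\tilde{\V}^{\zeta}_{\bf s}) = 0$ rather than the approximate version $\nabla^{r, \zeta}_{\bf s}(\V^{\zeta}_{\bf s}) = 0 \bmod ({\bf s})^p$ coming from Lemma \ref{VFlat}; otherwise the spectrum would only be constant in $Z$ modulo $({\bf s})^p$. This exact flatness is precisely what the isomonodromic lift of Section \ref{liftSection} (Corollary \ref{polyLiftCor}) provides, so once that construction is invoked the argument is a routine consequence of the Frobenius type axioms.
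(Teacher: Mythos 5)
Your proof is correct and is essentially the standard argument behind the paper's own (one-line) proof, which simply cites these facts as well-known consequences of isomonodromy following Dubrovin: the exact identities $F(\tilde{\nabla}^{r,\zeta}_{\bf s})=0$ and $\tilde{\nabla}^{r,\zeta}_{\bf s}(\tilde{\V}^{\zeta}_{\bf s})=0$ supplied by Corollary \ref{polyLiftCor} give, via the Leibniz rule and a flat frame, exactly the two assertions. You also correctly identify the one point that needs care, namely that the approximate identity of Lemma \ref{VFlat} would not suffice and the genuine lift of Section \ref{liftSection} is what makes the argument exact.
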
 
\begin{proof} These are well-known consequences of isomonodromy, see e.g. \cite[Lecture 3]{dubrovin}.
\end{proof}

Fix the choice of basis $\alpha_i$ for $K(\A) \otimes \R$. Let $\frac{d}{2}$ be an eigenvalue of $\tilde{\V}^{\zeta}_{\bf s}$ acting on the space of flat sections of $\nabla^{r, \zeta}_{\bf s}(\zeta)$ on $U$. Then we can find a section $\zeta$ of $K \to U$ such that 
\begin{equation}\label{zetaExistence}
\begin{cases}
\nabla^{r, \zeta}_{\bf s}(\zeta) = 0,\\
\tilde{\V}^{\zeta}_{\bf s}(\zeta) = \frac{d}{2} \zeta.
\end{cases}
\end{equation}
Restricting $U$ if necessary we may assume that $dZ(\zeta)$ is still an isomorphism. We can now summarise all our results so far.

\begin{thm}\label{mainThm} Let $\frac{d({\bf s})}{2}$ be an eigenvalue of $\tilde{\V}^{\zeta}_{\bf s}$.  There exists a semisimple Frobenius manifold structure on $U \subset \stab(\A)$ such that
\begin{enumerate}
\item[$\bullet$] the canonical coordinates are given by 
\begin{equation*}
u_i = Z(\alpha_i),
\end{equation*}
\item[$\bullet$] the flat identity and Euler field are
\begin{equation*}
e = \sum_i \del_{Z(\alpha_i)}, \quad E = \sum_{i} Z(\alpha_i) \del_{Z(\alpha_i)}
\end{equation*}
\item[$\bullet$] the flat metric is given by
\begin{equation*}
g_{\bf s}(u) = \sum_i c^2_i(Z, {\bf s}) du^2_i 
\end{equation*} 
\item[$\bullet$] the conformal dimension is $2 - d({\bf s})$.
\end{enumerate}
It is given by pulling back the Frobenius type structure \eqref{liftedFrobTypeStr} along $dZ(\zeta)$, where $\zeta$ is a section of $K \to U$ as in Corollary \ref{zetaExistence}. 

Moreover if we have $\alpha_i - \alpha_j \in \pm K(\A)_{> 0}$ for all $i \neq j$ then this can be analytically continued to a Frobenius manifold structure on all $\stab(\A)$, without monodromy. 
\end{thm}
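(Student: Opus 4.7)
The plan is to apply Hertling's Theorem \ref{HertThm} to the lifted Frobenius type structure \eqref{liftedFrobTypeStr} on the bundle $K(\zeta)\to U$, with the section $\zeta$ constructed in \eqref{zetaExistence}. All three hypotheses of that theorem are already built into the setup: the flatness $\tilde{\nabla}^{r,\zeta}_{\bf s}(\zeta)=0$ and homogeneity $\tilde{\V}^{\zeta}_{\bf s}(\zeta)=\tfrac{d({\bf s})}{2}\zeta$ conditions are \eqref{zetaExistence} itself, while $-C_{\bullet}(\zeta)=dZ(\zeta)\colon TU\to K(\zeta)$ is an isomorphism after shrinking $U$ if necessary. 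Hertling's theorem then yields a Frobenius manifold structure on $U$ of conformal dimension $2-d({\bf s})$, given by the pullback along $v=dZ(\zeta)$.

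The remaining data can then be read off directly. Since $\U=Z$ acts on $K(\zeta)$ diagonally in the basis $x_{\alpha_1},\ldots,x_{\alpha_n}$ with eigenvalues $Z(\alpha_i)$ that are pairwise distinct on an open dense subset of $U$, the pulled-back endomorphism $\U^M$ is semisimple; together with $E\circ(-)=\U^M$ this identifies $u_i=Z(\alpha_i)$ as canonical coordinates and exhibits $\circ$ as semisimple. The flat identity formula $e=v^{-1}(\zeta)$ from \cite[Theorem 5.12]{hert}, combined with the explicit form $\zeta=\sum_i c_i(Z,{\bf s})\,x_{\alpha_i}$ in \eqref{specialZeta} and the computation $v(\del_{u_i})=c_i(Z,{\bf s})\,x_{\alpha_i}$, gives $e=\sum_i\del_{u_i}$ and hence $E=\U(e)=\sum_i Z(\alpha_i)\,\del_{u_i}$. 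The pullback formula $g_M(X,Y)=g(v(X),v(Y))$ together with $g(x_\alpha,x_\beta)=\delta_{\alpha\beta}$ then gives $g_M=\sum_i c_i^2(Z,{\bf s})\,du_i^2$.

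For the global extension, assume $\alpha_i-\alpha_j\in\pm K(\A)_{>0}$ for all $i\neq j$. Then $Z(\alpha_i-\alpha_j)\in\bar{\calH}\cup(-\bar{\calH})$ for every $Z\in\stab(\A)$, so the Stokes rays $\ell_{ij}(Z)=\R_{>0}Z(\alpha_i-\alpha_j)$ stay in the closed upper or lower half plane and never wind around the origin as $Z$ varies. Consequently the Stokes multiplier \eqref{StokesMult} is globally well-defined and locally constant on $\stab(\A)$, and the isomonodromy argument behind Corollary \ref{polyLiftCor} produces a single-valued extension of \eqref{liftedFrobTypeStr} to $K(\zeta)\to\stab(\A)$. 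The flat homogeneous section $\zeta$ also extends single-valuedly, and pulling back yields the desired global Frobenius manifold on $\stab(\A)$ without monodromy.

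The step I expect to be the main subtlety is reconciling the two roles of the symbol $\zeta$: in \eqref{specialZeta} it is picked \emph{a priori} to define the bundle $K(\zeta)$ and the functions $c_i(Z,{\bf s})$ appearing in the metric, whereas in \eqref{zetaExistence} it is recovered \emph{a posteriori} as a flat eigenvector of $\tilde{\V}^{\zeta}_{\bf s}$. Verifying that these can be taken to coincide --- equivalently that the flat eigenvector has nowhere-vanishing components in the basis $x_{\alpha_i}$ after shrinking $U$ --- is what makes Corollary \ref{zetaCor} applicable and Hertling's isomorphism hypothesis available, and is the one place where the open set $U$ has to be chosen with care.
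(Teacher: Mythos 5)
Your proposal is correct and follows essentially the same route as the paper: Theorem \ref{mainThm} is presented there as a direct application of Hertling's Theorem \ref{HertThm} to the lifted structure \eqref{liftedFrobTypeStr} with the flat homogeneous section of \eqref{zetaExistence}, reading off $e=v^{-1}(\zeta)$, $E=\U(e)$ and $g_M=g(v(-),v(-))$ exactly as you do, and the global statement rests on the non-vanishing of $Z(\alpha_i-\alpha_j)$ on all of $\stab(\A)$. The subtlety you flag about the two roles of $\zeta$ is resolved in the paper just as you suggest, by shrinking $U$ so that $dZ(\zeta)$ remains an isomorphism for the flat eigenvector.
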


In the following we refer to the structure given by Theorem \ref{mainThm} simply as the semisimple  Frobenius manifold structure on $\stab(\A)$ (at the point ${\bf s}$). Notice that flatness of $e$ comes from flatness of $\zeta$ and that $\frac{\del}{\del Z(\alpha_i)}\circ \frac{\del}{\del Z(\alpha_j)} = \delta_{ij} \frac{\del}{\del Z(\alpha_i)}$.

By construction we can easily understand the Stokes multiplier.  
\begin{lem} The Frobenius type structure \eqref{liftedFrobTypeStr} and the semisimple Frobenius manifold structure on $\stab(\A)$ have the same Stokes multiplier $\tilde{\calS}$ (given by Definition \ref{naturalLiftDef}). 
\end{lem}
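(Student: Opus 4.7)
The plan is to prove the lemma by showing that, after choosing appropriate orthonormal trivializations on both sides, the two meromorphic connections on $\PP^1$ whose Stokes data is being compared coincide literally, not merely up to conjugation.

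First I would pin down the pullback map $v = -C_{\bullet}(\zeta)\!: \tg{U} \to K(\zeta)$ in explicit bases. Using the form \eqref{specialZeta}, $\zeta = \sum^n_{i=1} c_i(Z,{\bf s})\, x_{\alpha_i}$, and the canonical coordinates $u_i = Z(\alpha_i)$, a direct computation (the one already made in Corollary \ref{zetaCor}) gives
\begin{equation*}
v(\del_{u_i}) = dZ(\del_{u_i})(\zeta) = c_i(Z,{\bf s})\, x_{\alpha_i},
\end{equation*}
so that $v$ is diagonal in the bases $(\del_{u_i})$ and $(x_{\alpha_i})$, with matrix $\operatorname{diag}(c_1,\ldots,c_n)$.

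Next I would switch to orthonormal trivializations. Since $g(x_{\alpha_i}, x_{\alpha_j}) = \delta_{ij}$ by Proposition \ref{DTFrobTypeStr}, the basis $(x_{\alpha_i})$ is already $g$-orthonormal. On the tangent side the pullback metric is $g_M = g(v(-), v(-)) = \sum_i c_i^2\, du_i^2$ (as recorded in Theorem \ref{mainThm}), so the frame $f_i = \del_{u_i}/c_i$ is $g_M$-orthonormal and moreover $v(f_i) = x_{\alpha_i}$: in the pair of orthonormal frames, $v$ is the identity gauge. Consequently the matrices representing the pullback endomorphisms $\U^M = v^{-1}\U v$ and $\V^M = v^{-1}\tilde{\V}^{\zeta}_{\bf s} v$ in the frame $(f_i)$ coincide exactly with the matrices of $\U$ and $\tilde{\V}^{\zeta}_{\bf s}$ in the basis $(x_{\alpha_i})$: the former remains diagonal with eigenvalues $u_i = Z(\alpha_i)$, and the latter remains skew-symmetric as in Lemma \ref{additionalFlatness}.

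From here the conclusion is immediate. The meromorphic connection on $\PP^1$ whose Stokes data defines the Stokes multiplier of the pullback Frobenius manifold, written in Dubrovin's standard orthonormal canonical frame $(f_i)$, is
\begin{equation*}
d + \Big( \frac{\U^M}{z^2} - \frac{\V^M}{z} \Big) dz,
\end{equation*}
and by the previous paragraph this is identical to the connection $\tilde{\nabla}^{\zeta}_{\bf s}(Z)$ on $K(\zeta)$ in the basis $(x_{\alpha_i})$. The Stokes rays, Stokes factors and hence the Stokes multiplier $\tilde{\calS}$ of Definition \ref{naturalLiftDef} are therefore literally the same. The only subtlety is the choice of orthonormal trivializations, which is needed to avoid an apparent conjugation by $\operatorname{diag}(c_i)$; once those are fixed on both sides, the identification is tautological.
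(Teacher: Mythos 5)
Your argument is correct and is exactly the paper's (one-line) proof spelled out: the paper simply says to compute in the basis $\del_{\tilde{u}_i} = \big(c_i(Z,{\bf s})\big)^{-1}\del_{u_i}$, which is precisely your orthonormal frame $f_i$ in which $v$ becomes the identity gauge and the two meromorphic connections on $\PP^1$ coincide literally. Nothing is missing; you have just made explicit the diagonal form of $v$ and the resulting identification of $\U^M,\V^M$ with $\U,\tilde{\V}^{\zeta}_{\bf s}$.
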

\begin{proof} This is clear computing in the basis $\del_{\tilde{u}_i} = \big( c_{i}(Z, {\bf s}) \big)^{-1} \del_{u_i}$.
\end{proof}
From our point of view the main object of interest is the section $\zeta$ of the bundle $K$. It determines the conformal dimension and the metric of the Frobenius manifold structure on $\stab(\A)$.  Computing $\zeta = \zeta(\tilde{\calS})$ as a (multi-valued) function of the Stokes multiplier is an instance of the (hard) inverse problem for semisimple Frobenius manifolds (see e.g. \cite{guzzetti}).
\section{Examples}\label{examplesSec}
In this section we discuss several examples to which we may apply the general theory developed so far. We concentrate on the case $p = 3$ for these examples. By Theorem \ref{mainThm} these determine (possibly several) natural families of semisimple Frobenius manifold structures, sharing the same quadratic jet. An important example of higher order jets is discussed in the next Section.
\subsection{A general construction} Fix positive integers $n, \lambda$. Write $[S_k]$ for the standard basis of the lattice $\Z^n$. Choose $n$ linearly independent elements $\alpha_i$ of $\Z^n$ such that for all $i \neq j$ we have either
\begin{enumerate}
\item[$\bullet$] $\alpha_i - \alpha_j = \pm [S_k]$ for some $k$, or
\item[$\bullet$] $\alpha_i - \alpha_j = (\pm [S_u]) + (\pm [S_v])$ for some $u, v$, and we have $[S_u] = \alpha_i -\alpha_k$, $[S_v] = \alpha_k - \alpha_j$, or
\item[$\bullet$] $\alpha_i - \alpha_j$ is not of the form $\pm [S_k]$ or $(\pm [S_u]) + (\pm [S_v])$. 
\end{enumerate}
Pick a skew-symmetric tensor $\epsilon_{ij}$, $1 \leq i, j \leq n$, with values in $\{\pm 1\}$, giving a solution to the quadratic equations \eqref{quadratic}.

Let $Q$ be a quiver with $n$ vertices and let $\A = \A(Q) \subset \D(Q)$. Then $K(\A)$ is identified with $\Z^n$ and the canonical basis $[S_i]$ is the basis of classes of simple objects. Similarly the lattice elements $\alpha_i$ are canonically identified with elements of $K(\A)$. Since $\alpha_i$ is a basis there exist (possibly several) skew-symmetric bilinear forms $\bra - , - \ket$ on $K(\A)$ such that  
\begin{equation}\label{prescribedEuler}
\len(\alpha_j - \alpha_i) < 3 \Rightarrow \bra \alpha_i, \alpha_j\ket = \epsilon_{i j} \lambda.
\end{equation}
Our general construction implies the following.
\begin{lem}\label{generalExample} Let $Q$ be a quiver with Euler form $\bra - , - \ket$ satisfying \eqref{prescribedEuler}. In particular we can choose $Q$ as the quiver with vertices labelled by $[S_i]$ and with $\bra [S_i], [S_j] \ket$ arrows between $[S_i]$, $[S_j]$ for $i\leq j$.

Then the assumptions of Theorem \ref{mainThm} hold and so there is a canonical family of Frobenius manifold structures on $\stab(\A)$ for each choice of a natural lift $\tilde{\calS}$.
\end{lem}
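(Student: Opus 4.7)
The plan is to verify directly that every hypothesis needed by Theorem \ref{mainThm} is in place, and then invoke that result. The key observation is that the three bullet points on $\alpha_i-\alpha_j$ in the setup line up exactly with Lemma \ref{vanishingLem}, and the tensor $\epsilon_{ij}$ was chosen precisely to solve \eqref{quadraticEps}.

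First I would exhibit a quiver $Q$ with the prescribed Euler form. Since the elements $\alpha_1,\ldots,\alpha_n$ form an $\R$-basis of $K(\A)\otimes\R$, the assignment $\bra \alpha_i,\alpha_j\ket=\epsilon_{ij}\lambda$ extends uniquely to a skew-symmetric $\R$-bilinear form on $K(\A)\otimes\R$. I would then restrict this to $K(\A)=\bigoplus_i\Z[S_i]$, using the relations $\pm[S_k]=\alpha_i-\alpha_j$ (when they exist) and the freedom on pairs with $\len(\alpha_j-\alpha_i)\geq 3$ to arrange that $\bra[S_i],[S_j]\ket\in\Z$; the resulting integers $n_{ij}=\bra[S_i],[S_j]\ket$ for $i\le j$ then define the number (and orientation) of arrows of the specified quiver $Q$. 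By construction the Euler form of $Q$ satisfies \eqref{prescribedEuler}.

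Next I would check that for this choice of heart $\A=\A(Q)$ and basis $\alpha_i$ the hypotheses of Corollary \ref{approxFrobType} hold with $p=3$. Pick any section of the form \eqref{specialZeta}, for instance with $c_i(Z,{\bf s})\equiv 1$ on a suitable open $U\subset\stab(\A)$; by Corollary \ref{zetaCor} and Lemma \ref{HiggsInvariance} this is a valid choice for $\zeta$. It remains to verify the two conditions of Lemma \ref{nablarzetaFlat} for every pair $i,j$. If $\len(\alpha_j-\alpha_i)\geq 3$ the first alternative of that lemma holds. If instead $\len(\alpha_j-\alpha_i)<3$, then by \eqref{prescribedEuler} we have $\bra\alpha_i,\alpha_j\ket=\epsilon_{ij}\lambda$ and the quadratic identity \eqref{quadraticCondition} reduces to \eqref{quadraticEps}, which holds by the assumption on $\epsilon_{ij}$. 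Finally, the three bullet points in the setup of the section match verbatim the hypotheses of Lemma \ref{vanishingLem}, which then delivers the vanishing condition \eqref{vanishingCondition} with $p=3$.

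With all conditions of Corollary \ref{approxFrobType} verified, $\zeta$ is a good section and $K(\zeta)\to U$ carries an approximate Frobenius type structure of order $3$ in ${\bf s}$. Proposition \ref{liftingProp} and Corollary \ref{polyLiftCor} then lift this jet, for each choice of natural lift $\tilde{\calS}$ of the Stokes multiplier (Definition \ref{naturalLiftDef}), to a genuine Frobenius type structure on $K(\zeta)$ depending holomorphically on ${\bf s}$ in a small polydisc. Theorem \ref{mainThm} then pulls this back along $-C_\bullet(\zeta)=dZ(\zeta)$ to endow $U\subset\stab(\A)$ with the desired family of semisimple Frobenius manifold structures, giving one such family per natural lift $\tilde{\calS}$. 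The main (mild) subtlety is the integrality step in the first paragraph: ensuring that the skew form prescribed on $\alpha_i$'s by $\epsilon_{ij}\lambda$ descends to an \emph{integral} form on the basis $[S_i]$ of $K(\A)$, which is where the freedom afforded by the third bullet point (pairs of length $\geq 3$) is used.
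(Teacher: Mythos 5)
Your proposal is correct and follows exactly the route the paper intends: the paper gives no written proof of this lemma (it simply says ``our general construction implies the following''), and the intended argument is precisely your chain Lemma \ref{vanishingLem} $+$ \eqref{quadraticEps} $\Rightarrow$ hypotheses of Lemma \ref{nablarzetaFlat} with $p=3$ $\Rightarrow$ Corollary \ref{approxFrobType} $\Rightarrow$ Proposition \ref{liftingProp} $\Rightarrow$ Theorem \ref{mainThm}. Your remark on the integrality of the extended skew form is a worthwhile detail that the paper glosses over with ``there exist (several) skew-symmetric bilinear forms''.
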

\subsection{Fixed triangular basis} We run the construction above with $\lambda = 1$ and the \emph{fixed} choice of triangular basis $\alpha_i = \sum^n_{r = i} [S_r]$
(i.e. the $i$th basis element $\alpha_i$ is the $i$th row of the upper triangular rank $n$ matrix given by $\tau_{n, ij} = 1$ for $i \leq j$). The tables (Figures \ref{triangularRank2} - \ref{triangularRank4}) list the quivers for which this is a good basis (i.e. the basis underlying a good section) up to rank $4$ together with the jet of the corresponding Stokes matrix. In the case of rank $4$ we only list half the solutions, up to reversing all arrows (which acts nontrivially on the Stokes matrix). The rank $4$ case contains a free parameter $\kappa = \bra \alpha_1, \alpha_4\ket$ since $\len(\alpha_1 - \alpha_4) = 3$.
\begin{figure}[ht]
{\small
\begin{equation*} 
\begin{xy} 0;<.3pt,0pt>:<0pt,-.3pt>:: 
(0,3) *+{1} ="1",
(133,3) *+{2} ="2",
"1", {\ar"2"},
\end{xy} 
\quad \left(\begin{matrix}
1 & -s_1\\
   &  1   
\end{matrix}\right)
\quad\quad 
\begin{xy} 0;<.3pt,0pt>:<0pt,-.3pt>:: 
(0,3) *+{1} ="1",
(133,3) *+{2} ="2",
"2", {\ar"1"},
\end{xy} 
\quad \left(\begin{matrix}
1 & s_1\\
   &  1    
\end{matrix}\right)
\end{equation*}
}
\caption{Good quivers for $\tau_2$ and their Stokes matrix.} 
\label{triangularRank2}
\end{figure}

\begin{figure}[ht]
{\small
\begin{equation*}
\begin{matrix}
\begin{xy} 0;<.2pt,0pt>:<0pt,-.2pt>:: 
(0,3) *+{1} ="1",
(133,3) *+{2} ="2",
(261,3) *+{3} ="3",
"1", {\ar"2"},
"2", {\ar"3"},
\end{xy}
& \begin{xy} 0;<.2pt,0pt>:<0pt,-.2pt>:: 
(0,3) *+{1} ="1",
(133,3) *+{2} ="2",
(270,3) *+{3} ="3",
"2", {\ar"1"},
"3", {\ar"2"},
\end{xy}
& \begin{xy} 0;<.2pt,0pt>:<0pt,-.2pt>:: 
(0,3) *+{1} ="1",
(136,3) *+{2} ="2",
(271,3) *+{3} ="3",
"2", {\ar"1"},
"2", {\ar"3"},
\end{xy}
\\
\\
\left(\begin{matrix}
1 & -s_1& \\
   &  1   & -s_2\\
   &        & 1
\end{matrix}\right) & \left(\begin{matrix}
1 & s_1& s_1 s_2\\
   &  1   & s_2\\
   &        & 1
\end{matrix}\right) & \left(\begin{matrix}
1 & s_1 & - s_1s_2 \\
   &  1    & - s_2\\
   &        &1
\end{matrix}\right)\\
\\
\begin{xy} 0;<.2pt,0pt>:<0pt,-.2pt>:: 
(0,3) *+{1} ="1",
(131,3) *+{2} ="2",
(271,3) *+{3} ="3",
"1", {\ar"2"},
"3", {\ar"2"},
\end{xy} &
\begin{xy} 0;<.2pt,0pt>:<0pt,-.2pt>:: 
(0,3) *+{1} ="1",
(140,133) *+{2} ="2",
(273,3) *+{3} ="3",
"1", {\ar"2"},
"3", {\ar|*+{\scriptstyle 2}"1"},
"2", {\ar"3"},
\end{xy}
&
\begin{xy} 0;<.2pt,0pt>:<0pt,-.2pt>:: 
(0,3) *+{1} ="1",
(130,127) *+{2} ="2",
(271,3) *+{3} ="3",
"2", {\ar"1"},
"1", {\ar|*+{\scriptstyle 2}"3"},
"3", {\ar"2"},
\end{xy}\\
\left(\begin{matrix}
1 & -s_1 &  \\
   &  1    & s_2\\
   &        &1
\end{matrix}\right)&
\left(\begin{matrix}
1 & s_1 & \\
   &  1 & -s_2 \\
   &     & 1 
\end{matrix}\right)&
\left(\begin{matrix}
1 & -s_1 & -s_1 s_2 \\
   &  1 & s_2 \\
   &     & 1 
\end{matrix}\right)

\end{matrix}
\end{equation*}
}
\caption{Good quivers for $\tau_3$ and their Stokes matrix.}
\label{triangularRank3}
\end{figure}

\begin{figure}[ht]
{\tiny
\begin{equation*}
\begin{matrix}
\xymatrix{
1\ar[dd]|{-1+\kappa}\ar[ddrr]|{1-\kappa}\ar[rr] & & 2\ar[dd]\\
   &  &   \\
4 &  &3\ar[ll]
}&
\xymatrix{
1\ar[dd]|{-1+\kappa}\ar[ddrr]|{1-\kappa} & & 2\ar[dd]\ar[ll]\\
   &  &   \\
4 &  &3\ar[ll]
}&
\xymatrix{
1\ar[dd]|{-1+\kappa}\ar[ddrr]|{-1-\kappa}\ar[rr] & & 2\ar[dd]\\
   &  &   \\
4 &  &3\ar[ll]
}\\
\\
\left(\begin{matrix}
1 &  -s_1 & \\
   &  1 &  - s_2 \\
   &     & 1  & - s_3\\
    &   &     &  1       
\end{matrix}\right) & \left(\begin{matrix}
1 &  s_1 & -s_1 s_2\\
   &  1 &  - s_2 \\
   &     & 1  & - s_3\\
    &   &     &  1       
\end{matrix}\right) & \left(\begin{matrix}
1 &  s_1 & \\
   &  1 &  - s_2 \\
   &     & 1  & - s_3\\
    &   &     &  1       
\end{matrix}\right)\\
\\
\xymatrix{
1\ar[dd]|{-1+\kappa}\ar[ddrr]|{3-\kappa} & & 2\ar[ll]\\
   &  &   \\
4 &  &3\ar[ll]\ar[uu] 
}&
\xymatrix{
1\ar[dd]|{1+\kappa}\ar[ddrr]|{1-\kappa} & & 2\ar[ll]\ar[dd]\\
   &  &   \\
4\ar@{>>}[uurr]|\hole &  &3\ar[ll] 
}&
\xymatrix{
1\ar[dd]|{1+\kappa}\ar[ddrr]|{1-\kappa}\ar[rr] & & 2\\
   &  &   \\
4 &  &3\ar[ll]\ar[uu] 
}\\
\\
\left(\begin{matrix}
1 &  -s_1 & - s_1 s_2 \\
   &  1 & s_2 & -s_2 s_3\\
   &     & 1  & - s_3\\
    &   &     &  1       
\end{matrix}\right) & \left(\begin{matrix}
1 &  -s_1 & s_1 s_2 \\
   &  1 & s_2 & \\
   &     & 1  & - s_3\\
    &   &     &  1       
\end{matrix}\right) & \left(\begin{matrix}
1 &  -s_1 &  \\
   &  1 & s_2 & - s_2 s_3\\
   &     & 1  & - s_3\\
    &   &     &  1       
\end{matrix}\right)\\
\\
\xymatrix{
1\ar[dd]|{1+\kappa}\ar[ddrr]|{-1-\kappa}\ar[rr] & & 2\ar[dd]\\
   &  &   \\
4\ar@{>>}[uurr]|\hole &  &3\ar[ll] 
}&
\xymatrix{
1\ar[dd]|{-1+\kappa}\ar[ddrr]|{1-\kappa} & & 2\ar[ll]\\
   &  &   \\
4 &  &3\ar[ll]\ar[uu] 
}&
\xymatrix{
1\ar[dd]|{1+\kappa}\ar[ddrr]|{-1-\kappa} & & 2\ar[ll]\ar[dd]\\
   &  &   \\
4\ar@{>>}[uurr]|\hole &  &3\ar[ll] 
}\\
\\
\left(\begin{matrix}
1 &  -s_1 &  \\
   &  1 & s_2 &  \\
   &     & 1  & - s_3\\
    &   &     &  1       
\end{matrix}\right) & \left(\begin{matrix}
1 &  s_1 & s_1 s_2 \\
   &  1 & s_2 & -s_2 s_3 \\
   &     & 1  & - s_3\\
    &   &     &  1       
\end{matrix}\right) & \left(\begin{matrix}
1 &  s_1 & s_1 s_2 \\
   &  1 & s_2 &  \\
   &     & 1  & - s_3\\
    &   &     &  1       
\end{matrix}\right)\\
\\
\end{matrix}
\end{equation*}
}
\caption{Good quivers for $\tau_4$ and their Stokes matrix.}
\label{triangularRank4}
\end{figure}
\newpage

\subsection{} A very useful example of a basis of $\Z^3$ satisfying the conditions of Lemma \ref{vanishingLem} is obtained when $\alpha_i$ corresponds to the $i$th row of
\begin{equation*}
\delta_3 = \left(\begin{matrix}
1 & 0  & 1\\
0 & -1 & 1\\
0 & 0  & 1
\end{matrix}\right).
\end{equation*}
Setting $\lambda = 1$ we find $\delta_3$ is a good basis precisely for the quivers in Figure \ref{delta3figure}.
\begin{figure}[ht]
{\small
\begin{equation*}
\begin{matrix}
\xymatrix{
1\ar[dr] & &2\ar[ll]\\
& 3\ar[ur] &
}&
\xymatrix{
1\ar[rr]\ar[dr] & &2\\
  & 3\ar[ur] 
}&
\xymatrix{
1 & &2\ar[ll]\\
  & 3\ar[ul]\ar[ur] &
}
\\
\left(\begin{matrix}
1 & -s_1 s_2 & -s_1\\
0 & 1 &0\\
0 &s_2&1
\end{matrix}
\right)&
\left(\begin{matrix}
1 & 0 & -s_1\\
0 & 1 &0\\
0 &s_2&1
\end{matrix}
\right)&
\left(\begin{matrix}
1 & s_1 s_2 & s_1\\
0 & 1 &0\\
0 &s_2&1
\end{matrix}
\right)\\
\\
\xymatrix{
1 \ar[rr]\ar[dr]&&2\ar[dl]\\
&3&
}&
\xymatrix{
1 & & 2\ar[ll]\ar[dl]\\ 
  & 3 \ar[ul]
}&
\xymatrix{
1 \ar[rr]& & 2\ar[dl]\\ 
  & 3 \ar[ul]
}\\
\left(\begin{matrix}
1 & 0 & -s_1\\
0 & 1 &0\\
0 &-s_2&1
\end{matrix}
\right)& 
\left(\begin{matrix}
1 & s_1 s_2 & s_1\\
0 & 1 &0\\
0 &s_2&1
\end{matrix}
\right)&
\left(\begin{matrix}
1 & 0 & s_1\\
0 & 1 &0\\
0 &-s_2&1
\end{matrix}
\right)
\end{matrix}
\end{equation*}
}
\caption{Good quivers for $\delta_3$ and their Stokes matrix.}
\label{delta3figure}
\end{figure}

\section{$A_n$ quivers}\label{AnSection}

Examples \ref{A2} and \ref{A3} can be generalised to give a canonical quadratic jet of a family of semisimple Frobenius manifold structures on $\stab(\A(A_n))$. We first show how to do this and then we prove that a slight modification of our construction enhances this quadratic jet to a jet of order $n$.

We run the construction of Section \ref{examplesSec} with $\lambda = 1$, $p = 3$ and the special solutions to the quadratic equations \eqref{quadratic} and vanishing conditions \eqref{vanishingCondition} given by 
\begin{equation*}
\epsilon_{ij} = -1, \, i < j, \quad 
\alpha_i = \sum^n_{r = i} [S_r].
\end{equation*} 
To check that the latter indeed gives a solution we note that 
\begin{align}\label{AnClasses}
\nonumber \alpha_i - \alpha_{i + 1} &= [S_i],\\
\nonumber \alpha_i - \alpha_{i + 2} &= [S_i] + [S_{i + 1}]\\  
\nonumber &= (\alpha_i - \alpha_{i + 1}) + (\alpha_{i +1} - \alpha_{i + 2}),\\
\alpha_i - \alpha_j &= \sum^j_{r = i} [S_i] \neq [S_u] + [S_v], \quad j > i + 2. 
\end{align}
We regard \eqref{prescribedEuler} as a linear system to be solved for $\bra [S_i], [S_j]\ket$ for $i < j$. A little thought shows that in the present case the unique solution is given by  
\begin{equation*}
\bra [S_i], [S_j]\ket = -\delta_{j, i +1}, \quad i = 1, \dots, n - 1.
\end{equation*} 
Therefore the quiver $Q$ of Lemma \ref{generalExample} is $A_n = \bullet \to \cdots \to \bullet$ ($n$ vertices), endowed with the ordered collection of stable objects 
\begin{equation*}
S_i = \cdots \to 0 \to \underbrace{\C}_{i} \to 0 \to \cdots,\, i = 1, \dots, n.
\end{equation*}
According to Theorem \ref{mainThm} this determines (several) families of Frobenius manifolds whose underlying complex manifold is $\stab(\A(A_n))$, one for each choice of a natural lift $\tilde{S}$ (since $\alpha_i - \alpha_j \in K_{>0}(\A)$ for all $i \neq j$, they are well-defined on all $\stab(\A(A_n))$, i.e. the monodromy there is trivial). All these families agree modulo $({\bf s})^3$. We can also compute the Stokes multiplier $\calS$ modulo $({\bf s})^3$. Recall this is constant in $Z$ and so we can compute it assuming that only the simple objects are stable. With this assumption and using \eqref{AnClasses}, Corollary \ref{simpleStokesCor} shows that the only nontrivial Stokes factors $\calS_{ij}$ modulo $({\bf s})^3$ (up to exchanging $i, j$) are
\begin{align*}
\calS_{i, i +1} &= I - (-1)^{\bra [S_i], [S_{i+1}] \ket} \bra [S_i], [S_{i+1}] \ket s_{i} E_{i, i + 1} = I - s_i E_{i, i+1}, 
\end{align*}
so we have
\begin{align}\label{AnStokesMatrix}
 \calS &= \prod^{n - 1}_{j =1} \calS_{n - j, n- j +1} = I - \sum^{n-1}_{i = 1} s_i E_{i, i+1} = \left(\begin{matrix}
1 & -s_1 & \\
   & 1 & -s_2 & \\
   &   &    \ddots     &\\
   &   &         &1& -s_n\\
   &   &          &  & 1
\end{matrix}
\right).
\end{align}

We now show that a slight modification of our construction for $A_n$ resolves the ambiguity in the choice of $\tilde{\calS}$ and gives a single canonical family. The following simple observation is the crucial point.
\begin{lem}\label{AnVanishing} Fix the oriented quiver $A_n$ with triangular basis $\alpha_1, \ldots, \alpha_n$ as above. Let $i, j \in \{1,\ldots, n\}$. Then the quadratic condition \eqref{quadraticCondition} holds and the vanishing condition \eqref{vanishingCondition} holds modulo terms of order $\len(\alpha_i - \alpha_j) + 1$.
\end{lem}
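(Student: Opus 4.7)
The plan is to verify the quadratic condition by directly computing the Euler pairings of the triangular basis, and then to reduce the vanishing condition to a combinatorial vanishing of the coefficients $c(\beta_1,\ldots,\beta_k)$ whenever the involved classes fail to fit together into a single interval. For the quadratic condition, using $\alpha_i = \sum_{r=i}^n [S_r]$ and $\bra [S_a], [S_b]\ket = -\delta_{b, a+1} + \delta_{a, b+1}$, a short expansion via skew-symmetry ($\bra \alpha_i, \alpha_j\ket = \bra \alpha_i - \alpha_j, \alpha_j\ket$) yields $\bra \alpha_i, \alpha_j\ket = -\operatorname{sgn}(j - i)$ for $i \neq j$. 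This is the particular solution $\lambda = 1$, $\epsilon_{ij} = -\operatorname{sgn}(j - i)$ of the constraints recalled in the lemma following Lemma \ref{nablarzetaFlat}, so \eqref{quadraticCondition} holds automatically for every triple of pairwise distinct indices.

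For the vanishing condition, I fix $i < j$ (the case $i > j$ is symmetric via the $[1]$-symmetry of $\dt_{\A}$). Then $\alpha_j - \alpha_i = -\sum_{r=i}^{j-1}[S_r] \in -K(\A)_{>0}$ and $\len(\alpha_i - \alpha_j) = j - i$. Given a non-excluded decomposition $\alpha_j - \alpha_i = \beta + \gamma$, I plan to split by the signs of $\beta, \gamma$ in $K(\A) = K(\A)_{>0} \cup -K(\A)_{>0}$ (only $\pm$-effective classes can have nonzero $\dt_{\A}$). If the signs are opposite, cancellation forces $\len(\beta) + \len(\gamma) \geq (j-i) + 2$, and the monomial weights built into \eqref{fFunFps} give the desired vanishing immediately. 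The substantive case is thus $\beta = -\sum_{r \in A}[S_r]$, $\gamma = -\sum_{r \in B}[S_r]$ with $A \sqcup B = [i, j-1]$ a partition into two nonempty subsets.

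The core step is the claim that if $A \subseteq \{1, \ldots, n\}$ has more than one maximal interval, then $f^{-\sum_{r \in A}[S_r]}_{\mathbf{s}}$ lies in $(\mathbf{s})^{|A|+1}$. Since $\dt_{\A}$ for $A_n$ is supported on $\pm$ interval classes, any degree-$|A|$ contribution must arise from ordered partitions $A = A_1 \sqcup \cdots \sqcup A_k$ into intervals, weighted by $c(-[A_1], \ldots, -[A_k])$. A direct check shows that $\bra -[r, s], -[r', s']\ket \neq 0$ precisely when $[r, s]$ and $[r', s']$ meet at a common integer endpoint. When $A$ is broken by a gap, every sub-interval $A_l \subseteq A$ is confined to a single maximal interval of $A$, so interval refinements cannot bridge gaps: the graph of nonzero Euler pairings on $\{-[A_l]\}$ is disconnected, no spanning tree of $\{1, \ldots, k\}$ uses only its edges, and every summand defining $c(-[A_1], \ldots, -[A_k])$ vanishes.

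To conclude, I note that a non-excluded partition $(A, B)$ of $[i, j-1]$ must have at least one of $A, B$ non-interval: if both were intervals, the one containing $i$ would be $[i, k-1]$ and the other $[k, j-1]$ for some $i < k \leq j$, giving exactly the excluded pair $(\alpha_k - \alpha_i, \alpha_j - \alpha_k)$. Combined with the core step this forces $f^\beta_{\mathbf{s}} f^\gamma_{\mathbf{s}} \in (\mathbf{s})^{|A|+|B|+1} = (\mathbf{s})^{j - i + 1}$ in every non-excluded case, so $\bra \beta, \gamma \ket f^\beta_{\mathbf{s}} f^\gamma_{\mathbf{s}} \in (\mathbf{s})^{\len(\alpha_i - \alpha_j) + 1}$. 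The hard part will be the vanishing of the $c$-coefficient for disconnected adjacency graphs; once one pins down that the Euler pairing detects exactly the integer-adjacency of interval classes and observes that refinements cannot bridge gaps, everything else reduces to bookkeeping on $\mathbf{s}$-degrees.
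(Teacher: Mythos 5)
Your proposal is correct and follows the same overall strategy as the paper: reduce to decompositions given by set partitions $A \sqcup B = \{i, \ldots, j-1\}$, show that $f^{\beta}_{\bf s}$ gains one extra order of vanishing in ${\bf s}$ whenever the support of $\beta$ has a gap, and observe that the only partitions of $\{i,\ldots,j-1\}$ into two genuine intervals are exactly the excluded decompositions into $\alpha_j - \alpha_k$, $\alpha_k - \alpha_i$. Where you differ is in the proof of the core claim. The paper proves that a gap forces $f^{\sum_{r\in A}[S_r]}_{\bf s} \in ({\bf s})^{|A|+1}$ by induction on $|A|$, with base case the explicit quadratic formula for $f^{[S_h]+[S_k]}_{\bf s}$ together with $\dt_{\A}([S_h]+[S_k], Z) = \bra [S_h], [S_k]\ket = 0$ for non-consecutive $h, k$ (the inductive step is left to the reader). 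You instead argue directly from the tree-sum formula for the coefficients $c(\alpha_1, \ldots, \alpha_k)$: a minimal-degree contribution forces a decomposition of $A$ into intervals, each confined to a single maximal interval of $A$, so the graph of nonzero Euler pairings among the summands is disconnected, every spanning tree must use a zero edge, and $c$ vanishes. This is a clean, self-contained substitute for the paper's induction, and it buys you an argument that visibly uses only the support of $\dt_{\A}$ on interval classes plus the adjacency structure of the $A_n$ Euler form. You are also more careful than the paper in disposing of decompositions $\beta + \gamma$ in which $\beta$ or $\gamma$ fails to be anti-effective: the paper silently absorbs this into the assertion that decompositions \emph{correspond uniquely to set partitions}, whereas your length-excess observation ($\len(\beta)+\len(\gamma) \geq \len(\alpha_j-\alpha_i)+2$ under any sign cancellation) closes that small gap explicitly.
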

\begin{proof} We have already observed that setting $\epsilon_{h k} = -1$ for $h < k$ gives a solution to \eqref{quadratic} for all $i, j$ (i.e. with $p = n+1$). For the claim concerning the vanishing condition \eqref{vanishingCondition} we need to show 
\begin{equation*} 
\bra \beta, \gamma\ket  f ^{\beta}_{\bf s}(Z)  f ^{\gamma}_{\bf s}(Z) \in ({\bf s})^{\len(\alpha_i - \alpha_j)+1}
\end{equation*}
for all nontrivial decompositions $\alpha_j - \alpha_i = \beta + \gamma$ with $\beta, \gamma$ not equal to $\alpha_j - \alpha_k$, $\alpha_k - \alpha_i$. We can assume $i < j$. For $A_n$ with triangular basis we have $\alpha_i - \alpha_j = \sum^{j - 1}_{k = i} [S_i]$, so a decomposition $\alpha_j - \alpha_i$ corresponds uniquely to a set partition $A \cup B = \{i, i+1, \ldots, j - 1\}$ with $A \cap B = \emptyset$. For such a decomposition we have
\begin{equation*}
 f ^{\beta}_{\bf s}(Z)  f ^{\gamma}_{\bf s}(Z) = f ^{\sum_{h \in A}[S_h]}_{\bf s}(Z)  f ^{\sum_{k \in B}[S_k]}_{\bf s}(Z).
\end{equation*}
By \eqref{fFunFps} we always have $f ^{\sum_{h \in A}[S_h]}_{\bf s}(Z) \in ({\bf s})^{|A|}$. We claim that if $A$ contains a gap (i.e. it is not a subset of consecutive integers in $\{i, i+1, \ldots, j - 1\}$) then in fact $f ^{\sum_{h \in A}[S_h]}_{\bf s}(Z) \in ({\bf s})^{|A| + 1}$. This can be shown by induction on $|A|$, starting from the fact that $f^{[S_h] + [S_k]}_{\bf s}(Z) \in ({\bf s})^3$ if $h, k$ are not consecutive integers. To prove this note that for $h \neq k$ we have
\begin{align*}
f^{[S_h] + [S_k]}_{\bf s}(Z) = &\frac{1}{2\pi i} \dt([S_h] + [S_k], Z) s_h s_k\\
& + \frac{1}{(2\pi i)^2} (-1)^{\bra [S_h], [S_k] \ket}\bra [S_h], [S_k] \ket\\ &\quad M_2(Z([S_h]), Z([S_k]))\\& \quad \dt_{\A}([S_h], Z) \dt_{\A}([S_k], Z) s_h s_k.
\end{align*} 
and for $A_n$ we have $\dt([S_h] + [S_k], Z) = \bra [S_h], [S_k] \ket = 0$ if $h, k$ are not consecutive. 

The argument above applies equally to $B$, so $f ^{\sum_{h \in A}[S_h]}_{\bf s}(Z) f ^{\sum_{k \in B}[S_k]}_{\bf s}(Z) \in ({\bf s})^{|A| + |B|+1}$ unless $A$, $B$ are both sequences of consecutive integers in $\{i, i+1, \ldots, j-1\}$. But this means that up to exchanging $A$, $B$ we have $A = \{i, i+1, \ldots, q\}$, $B = \{q +1, q +2, \ldots, j - 1\}$ for some $q$, that is $\beta = \sum_{h \in A} [S_h] = \sum^{q}_{h = i} [S_i] = \alpha_i - \alpha_{q + 1}$ and similarly $\gamma = \sum_{k \in B} [S_k] = \alpha_{q + 1} - \alpha_j$, a contradiction. 
\end{proof}
Recall the definition of $A_{ij}$ and $\V_{\bf s}^{\zeta}$ in \eqref{Aij} and \eqref{Vzeta} respectively.
\begin{cor}\label{AnConnection} In the case of $A_n$ define a new connection form $A'$ and endomorphism $\V'^{\zeta}_{\bf s}$ by
\begin{equation*}
A'_{ij} = A_{ij} \mod ({\bf s})^{\len(\alpha_i - \alpha_j) + 1},\quad
\V'^{\zeta}_{\bf s} = \V^{\zeta}_{\bf s} \mod ({\bf s})^{\len(\alpha_i - \alpha_j) + 1}.
\end{equation*}
Then the structure on $K(\zeta)$ given by
\begin{equation*}
(d + A', C|_{K(\zeta)}, \U|_{K(\zeta)}, \V'^{\zeta}_{\bf s}, g|_{K(\zeta)})
\end{equation*}
is a Frobenius type structure modulo terms which are of order at least $n + 1$ in ${\bf s}$. In particular its Stokes matrix $\calS$ given by \eqref{AnStokesMatrix} is in fact constant modulo $({\bf s})^{n+1}$, that is the canonical lift $\tilde{\calS}^0$ and all the natural lifts $\tilde{\calS}$ coincide with $\calS$ thought of as an element of $GL(K(\zeta))[{\bf s}]$. 
\end{cor}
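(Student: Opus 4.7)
My plan is to combine the sharp vanishing supplied by Lemma~\ref{AnVanishing} with the per-pair version of Lemma~\ref{nablarzetaFlat} allowed by the remark following it, then to analyse how the entrywise truncation affects both the Frobenius type equations and the associated Stokes multiplier.

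First, Lemma~\ref{AnVanishing} establishes that the quadratic condition \eqref{quadraticCondition} holds for all triples and that the vanishing condition \eqref{vanishingCondition} holds to sharp order $p_{ij}=\len(\alpha_i-\alpha_j)+1=|i-j|+1$. Feeding $p_{ij}=|i-j|+1$ into Lemmas~\ref{nablarzetaFlat} and~\ref{VFlat} entrywise yields
\begin{equation*}
F(\nabla^{r,\zeta}_{\bf s})_{ji}\in ({\bf s})^{|i-j|+1},\qquad \nabla^{r,\zeta}_{\bf s}(\V^{\zeta}_{\bf s})_{ji}\in ({\bf s})^{|i-j|+1}
\end{equation*}
for all $(i,j)$, while the remaining Frobenius type identities are automatic by Lemma~\ref{additionalFlatness}.

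Next I would analyse the passage from $A,\V^{\zeta}_{\bf s}$ to the truncations $A',\V'^{\zeta}_{\bf s}$. Since $f^{\alpha_j-\alpha_i}_{\bf s}(Z)$ has ${\bf s}$-valuation exactly $|j-i|$, each entry $A'_{ji}$, $(\V'^{\zeta}_{\bf s})_{ji}$ is a single leading monomial. Writing $A=A'+B$ and $\V^{\zeta}_{\bf s}=\V'^{\zeta}_{\bf s}+W$ with $B_{ji},W_{ji}\in({\bf s})^{|i-j|+1}$, the triangle inequality $|j-k|+|k-i|\ge|j-i|$ applied to the ${\bf s}$-degrees of products $A'_{jk}\wedge B_{ki}$ etc.\ forces the discrepancies $F(d+A')-F(d+A)$ and $\nabla^{r,\zeta}_{\bf s}(\V'^{\zeta}_{\bf s})-\nabla^{r,\zeta}_{\bf s}(\V^{\zeta}_{\bf s})$ to lie entrywise in $({\bf s})^{|i-j|+1}$. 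Combined with the first step and a further inductive control along chains $(\alpha_i-\alpha_{k_1})+\cdots+(\alpha_{k_m}-\alpha_j)$---which exploits the uniform value $\bra\alpha_i,\alpha_j\ket=-1$ for $i<j$ in the $A_n$ triangular basis (a direct calculation)---this gives the Frobenius type conditions for the truncated structure modulo terms of order at least $n+1$ in ${\bf s}$.

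For the Stokes multiplier I would apply Lemma~\ref{StokesLem_p} to the modified connection $d+(\U/z^2-\V'^{\zeta}_{\bf s}/z)\,dz$. Each $(\V'^{\zeta}_{\bf s})_{ij}$ being a monomial of degree $|i-j|$, the chains in the iterated-integral formula for $\calS_{ij}$ produce only monomials whose total ${\bf s}$-degree is the sum of link lengths. An inductive extension of Corollary~\ref{simpleStokesCor} to longer chains, using the telescoping $(\alpha_i-\alpha_k)+(\alpha_k-\alpha_j)=\alpha_i-\alpha_j$, shows that every Stokes factor $\calS_{ij}$ for $|i-j|\ge 2$ reduces to the identity modulo $({\bf s})^{n+1}$, while $\calS_{i,i+1}=I-s_iE_{i,i+1}$. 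The clockwise product of Stokes factors in any admissible chamber therefore collapses to $\calS=I-\sum_i s_i E_{i,i+1}$ of \eqref{AnStokesMatrix}; since this is already a polynomial in ${\bf s}$, the canonical lift $\tilde{\calS}^0$ and every natural lift $\tilde{\calS}$ of Definition~\ref{naturalLiftDef} coincide with it as elements of $GL(K(\zeta))[{\bf s}]$. The main obstacle is proving that the Stokes factors $\calS_{ij}$ for $|i-j|\ge 2$ collapse to the identity modulo $({\bf s})^{n+1}$: this rests on nontrivial cancellations among the iterated integrals $M_{m+1}$ and the monomial products $(\V'^{\zeta}_{\bf s})_{ik_1}\cdots(\V'^{\zeta}_{\bf s})_{k_mj}$ along every admissible chain from $i$ to $j$, and is where the specific $A_n$ combinatorics of the classes $\alpha_i-\alpha_j=\pm\sum_{r=\min(i,j)}^{\max(i,j)-1}[S_r]$ becomes essential.
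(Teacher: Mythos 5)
The decisive step of your Stokes argument is false. You claim that every Stokes factor $\calS_{ij}$ with $|i-j|\geq 2$ collapses to the identity modulo $({\bf s})^{n+1}$, but Corollary \ref{simpleStokesCor} computes $\calS_{ij}(Z) = I - (-1)^{\bra \alpha_i, \alpha_j\ket} \bra \alpha_i, \alpha_j\ket \, \dt_{\A}(\alpha_i - \alpha_j, Z)\, {\bf s}^{\alpha_i - \alpha_j} E_{ij}$, a nontrivial perturbation of degree $|i-j|\leq n-1$ whenever there is a semistable object of class $\alpha_i-\alpha_j$: see $\calS_{13}$ in Example \ref{A3}, and the factor $\tilde{\calS}_{14}=I-\dt([S_1]+[S_2]+[S_3],Z)\,s_1s_2s_3E_{14}$ in the $A_4$ example following the corollary. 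Moreover, even if those factors were trivial your conclusion would not follow, because the surviving rank-one factors $\calS_{i,i+1}=I-s_iE_{i,i+1}$ do not commute ($E_{12}E_{23}=E_{13}$ while $E_{23}E_{12}=0$), so their clockwise product still depends on the chamber. The true mechanism is the opposite of what you assert: the nontrivial higher factors are exactly what compensates for the reordering of the $\calS_{i,i+1}$ across walls, so that it is the ordered product, not the individual factors, which is chamber-independent.

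Correspondingly, the ``further inductive control along chains'' you invoke to upgrade the entrywise bound $F(\nabla^{r, \zeta}_{\bf s})_{ji}\in({\bf s})^{|i-j|+1}$ to vanishing modulo $({\bf s})^{n+1}$ is precisely the missing content, and it cannot be extracted from $\bra\alpha_i,\alpha_j\ket=-1$ alone. The paper instead reduces the whole corollary to the single claim that $\prod^{\to}_{\ell \subset \bar{\calH}} \calS_{\ell}(Z)$ is constant in $Z$ modulo $({\bf s})^{n+1}$ (constancy of the generalised monodromy being equivalent, via \cite{bt_stokes}, to the isomonodromy, i.e.\ Frobenius type, equations), obtains constancy modulo $({\bf s})^{\len(\alpha_i-\alpha_j)+1}$ entrywise from Lemma \ref{AnVanishing}, and then kills the components of $(\tilde{\calS})_{ij}$ of degree $\len(\alpha_i-\alpha_j)+q$ with $q>0$ by a positivity argument: such a component would require a decomposition $\alpha_i-\alpha_j=\sum_h\gamma_h$ into nonzero classes of the form $\alpha_k-\alpha_l$ with $\sum_h\len(\gamma_h)>\len(\alpha_i-\alpha_j)$; since $\len$ is additive on $K(\A)_{>0}$ this forces some $\gamma_h\in -K(\A)_{>0}$, which is impossible for rays contained in $\bar{\calH}$. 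Your proposal contains no substitute for this step, so as written it establishes neither the approximate Frobenius type conditions nor the constancy of the Stokes multiplier.
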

\begin{proof} It is enough to prove that the product 
\begin{equation*}
\tilde{\calS} = \prod^{\to}_{\ell \subset \bar{\calH}} \calS_{\ell}(Z)
\end{equation*}
is constant in $Z$ modulo $({\bf s})^{n+1}$. By Lemma \ref{AnVanishing} the $(i,j)$ entry $(\tilde{\calS})_{ij}$ of $\tilde{\calS}$ is constant modulo $({\bf s})^{\len(\alpha_i - \alpha_j) + 1}$. Now choose $0 < q < n - \len(\alpha_i - \alpha_j)$ and look at the component of the polynomial $(\tilde{\calS})_{ij}$ of total degree $\len(\alpha_i - \alpha_j) + q$. By our choice of $A'$, $\V'$ and the explicit formula for Stokes factors in terms of connection coefficients given in \cite[Theorem 4.5]{bt_stokes}  (i.e. the higher order analogue of \eqref{approxStokesFactors}) a contribution to $(\tilde{\calS})_{ij}$ of this degree involves at least $\len(\alpha_i - \alpha_j) + q$ distinct factors in the product and so corresponds to a decomposition of $\alpha_i - \alpha_j$ with at least $\len(\alpha_i - \alpha_j) + q$ nonvanishing summands in $K(\A)$, 
\begin{equation*}
\alpha_i - \alpha_j = \sum^{\len(\alpha_i - \alpha_j) + q}_{h = 1} \gamma_h, \quad \gamma_h \in K(\A) \setminus \{0\}.
\end{equation*}       
It follows that we must have $\gamma_h \in - K_{> 0}(\A)$ for some $h$, a contradiction since we are taking the product over the positive half-plane $\bar{\calH}$. 
\end{proof}
\begin{exm} Let us revisit the case of $A_4$ discussed in Example \ref{A4Lift}. Recall that looking only at the Stokes factors modulo $({\bf s})^3$ we found two different chambers for the product $\tilde{\calS}(Z)$, namely
\begin{equation*}
\tilde{\calS}_{34} \tilde{\calS}_{23} \tilde{\calS}_{12} = {\tiny \left(\begin{matrix} 
1 & - s_1 & 0 & 0 \\
   & 1  & -s_2 & 0 \\
   &     & 1& - s_3 \\
   &   &     & 1
\end{matrix}\right)},\,\,\,
\tilde{\calS}_{12} \tilde{\calS}_{13} \tilde{\calS}_{23} \tilde{\calS}_{24} \tilde{\calS}_{34}={\tiny \left(\begin{matrix}
1 & - s_1 & 0 & s_1 s_2 s_3 \\
   & 1  & -s_2 & 0 \\
   &     & 1& - s_3 \\
   &   &     & 1
\end{matrix}\right)}.
\end{equation*}
The problem is resolved by looking at Stokes factors modulo $({\bf s})^5$. Indeed this gives an additional factor $\tilde{S}_{14}$, and a lengthy direct computation shows $\tilde{S}_{14} = (I - \dt([S_1] + [S_2] + [S_3], Z)) s_1 s_2 s_3$ which contributes to the second factorisation giving
\begin{equation*}
\tilde{\calS}_{12} \tilde{\calS}_{13} \tilde{\calS}_{14}\tilde{\calS}_{23} \tilde{\calS}_{23} \tilde{\calS}_{34}={\tiny \left(\begin{matrix}
1 & - s_1 & 0 & 0 \\
   & 1  & -s_2 & 0 \\
   &     & 1& - s_3 \\
   &   &     & 1
\end{matrix}\right)}
\end{equation*}
as required.
\end{exm}
We can now prove our main result in the case of the standard $A_n$ quiver.
\begin{proof}[Proof of Theorem \ref{mainThmAn}] By Corollary \ref{AnConnection} we have a jet of a family of Frobenius manifold structures on $K(\zeta)$ over $\stab(\A(A_n))$ modulo $({\bf s})^{n + 1}$. This can be lifted canonically since all the natural lifts $\tilde{\calS}$ coincide. 

According to \cite[Corollary 4.7]{dubrovin}  the Stokes matrix $\tilde{\calS}$ given by \eqref{AnStokesMatrix} evaluated at the special point ${\bf s}_J = (s_1 = 1, \cdots, s_n = 1)$ (for a unique choice of eigenvalue $d({\bf s}_J)$) is precisely the Stokes matrix of a branch of the unfolding space of the $A_n$ singularity, so we recover this branch from $\A(A_n)$. 
\end{proof}
The construction of the present Section applies equally to all quivers with the same underlying unoriented graph as $A_n$. The proofs are just the same.
\begin{lem}\label{AnSolutions} Setting $\epsilon_{ij} = \pm 1$ for $i = 1, \cdots, n-1$ and all $i < j$ and evaluating at ${\bf s}_J$ gives $2^{n-1}$ Frobenius manifold structures on $\bar{\calH}^n$, corresponding to the $2^{n-1}$ orientations of the $A_n$ unoriented graph. Their Stokes matrices for $n \leq 4$ are given in Figures \ref{triangularRank2}, \ref{triangularRank3} and \ref{triangularRank4} (choosing $\kappa = \pm 1$ for the latter). 
\end{lem}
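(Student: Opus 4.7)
The plan is to observe that the entire argument leading to Theorem \ref{mainThmAn} depends only on two orientation-independent inputs: the combinatorics of the classes $\alpha_i - \alpha_j = \sum_{r=i}^{j-1}[S_r]$ in the triangular basis, and the vanishing of Joyce invariants $\dt_{\A}([S_h]+[S_k],Z) = 0$ for non-adjacent simples, which holds for any orientation of $A_n$ since $\ext^1_{\A}(S_h,S_k) = 0$ when $|h-k| \geq 2$. For each of the $2^{n-1}$ orientations, I would associate signs $\sigma_k = \bra [S_k],[S_{k+1}]\ket \in \{\pm 1\}$, determined by the direction of the $k$-th arrow, and keep the triangular basis $\alpha_i = \sum_{r=i}^n [S_r]$.

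The first step is to verify that this is a good basis in the sense of Corollary \ref{approxFrobType} for every sign assignment. A direct computation using $\bra [S_r],[S_s]\ket = 0$ for $|r-s|\geq 2$ yields
\begin{equation*}
\bra \alpha_i,\alpha_j\ket \,=\, \sigma_{j-1} \quad \text{for } i < j,
\end{equation*}
so in the notation of Lemma \ref{nablarzetaFlat} we have $\epsilon_{ij} = \sigma_{j-1}$ and $\lambda = 1$. The quadratic constraints \eqref{quadraticEps} then reduce, for any triple $i < j < k$, to the identity $\sigma_{j-1}^2 + \sigma_{j-1}\sigma_{k-1} - \sigma_{j-1}\sigma_{k-1} - \sigma_{k-1}^2 = 0$, which holds identically. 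The vanishing condition of Lemma \ref{AnVanishing} goes through verbatim, since its proof only used the combinatorial description of $\alpha_i - \alpha_j$ and the vanishing of invariants for non-adjacent simples; no sign choice is involved.

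Next I would invoke the orientation-independent version of Corollary \ref{AnConnection}: the jet of Frobenius type structures is automatically of order $n+1$, so all natural lifts $\tilde{\calS}$ coincide with the canonical one and give a genuine Frobenius type structure on $K(\zeta) \to \stab(\A)$. Pulling back via Theorem \ref{HertThm} and evaluating at ${\bf s}_J$ produces a semisimple Frobenius manifold on $\bar{\calH}^n$, exactly as in the proof of Theorem \ref{mainThmAn}. Distinct sign sequences $(\sigma_k)$ give distinct Euler forms and hence distinct endomorphisms $\V^{\zeta}_{{\bf s}_J}$, so the resulting $2^{n-1}$ Frobenius manifolds are genuinely different.

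Finally, for $n \leq 4$, the Stokes matrices are computed by repeating Examples \ref{A2Stokes} and \ref{A3Stokes} with the prescribed signs: Corollary \ref{simpleStokesCor} gives $\calS_{i,i+1} = I - (-1)^{\sigma_i}\sigma_i s_i E_{i,i+1}$, and higher Stokes factors follow from the multiplicativity of $\tilde{\calS}^0$. Reading the result as an upper-triangular unipotent matrix and matching the entries against Figures \ref{triangularRank2}--\ref{triangularRank4} (with $\kappa = \pm 1$ for the $n=4$ case, reflecting the freedom in $\bra \alpha_1,\alpha_4\ket$ when $\len(\alpha_1-\alpha_4) = 3$ exceeds the quadratic range) completes the verification. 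I expect the main bookkeeping obstacle to be the sign analysis in Step~4, particularly disentangling which of the quivers in the figures correspond to $A_n$ orientations as opposed to other rank-$n$ graphs admitting the same triangular basis.
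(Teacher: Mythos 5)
Your proposal is correct and follows essentially the same route as the paper, which in fact offers no argument beyond the preceding remark that ``the construction of the present Section applies equally to all quivers with the same underlying unoriented graph as $A_n$; the proofs are just the same'': you have simply made that explicit by checking that with $\bra\alpha_i,\alpha_j\ket=\sigma_{j-1}$ the quadratic constraints \eqref{quadraticEps} hold identically, that the vanishing conditions and the order-$(n+1)$ enhancement of Corollary \ref{AnConnection} are orientation-independent, and that Corollary \ref{simpleStokesCor} yields $\calS_{i,i+1}=I+\sigma_i s_iE_{i,i+1}$. The only quibble is the parenthetical about $\kappa$: for a genuine $A_4$ orientation $\bra\alpha_1,\alpha_4\ket=\sigma_3$ is determined by the last arrow rather than free, so ``$\kappa=\pm1$'' is a constraint singling out the $A_4$ orientations among the quivers of Figure \ref{triangularRank4}, not residual freedom.
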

Note that all these quivers are mutations of $A_n$ with its standard orientation. It turns out that the corresponding Frobenius manifolds are always related by analytic continuation, i.e. they are different branches of the same semisimple Frobenius structure on $\conf_n(\C)$. We describe this (including more general mutations) in the next Section.

\section{Mutations and analytic continuation}\label{mutationsSec}

In this Section we extend Theorem \ref{mainThmAn} to all mutations of $A_n$, and then provide examples where mutation-equivalence for quivers can be related directly to analytic continuation for semisimple Frobenius manifolds. The main result concerns $A_n$ for $n \leq 5$, although we expect this holds for all $A_n$. We refer to \cite[Section 7]{bridsmith} for basic material on quiver mutation and its categorification, and to \cite[Appendix F]{dubrovin}  and \cite[Section 1.8]{guzzetti} for analytic continuation of semisimple Frobenius manifolds.

If the quivers $(Q_i, W_i)$, $i = 1, 2$ are mutation-equivalent the $\cy$ triangulated categories $\D(Q_i, W_i)$ are equivalent. So if the infinite-dimensional Frobenius type structures $K \to \stab(\A(Q_i, W_i))$ admit good sections, it seems reasonable to expect that such sections $\zeta_{\A(Q_i, W_i)}$ can be chosen so that the corresponding semisimple Frobenius manifolds are equivalent in some sense. Via the canonical coordinates $u_i$ given by Theorem \ref{mainThm}, both structures can be thought of as living naturally on open subsets (isomorphic to $\calH^n$) of the configurations space  
$\conf_n(\C) = \{(u_1, \ldots, u_n) \in \C^n : i \neq j \Rightarrow u_i \neq u_j\} / \Sigma_n$.
Thus a natural equivalence relation is that the two structures should be branches of the same semisimple Frobenius manifold on $\conf_n(\C)$. This can be checked via the Stokes matrices as follows.

Fixing $A \in M_n(\C)$ an upper triangular matrix with eigenvalues $1$, we introduce an elementary braiding matrix $\beta_{i, i+1}(A)$, $i = 1, \ldots, n-1$ by perturbing the identity $I$ with a block
\begin{equation*}
\left(\begin{matrix}
0 & 1\\
1 & -A_{i, i+1}
\end{matrix}
\right)
\end{equation*}   
with upper left entry corresponding to the $i$th diagonal entry, so e.g. 
\begin{equation*}
\beta_{1,2}\left(\begin{matrix}
1 & a\\
0 & 1
\end{matrix}\right) = \left(\begin{matrix}
0 & 1\\
1 & -a
\end{matrix}\right).
\end{equation*}

For the inverse operator $\beta_{i,i+1}^{-1}$ the corresponding block is
	\begin{equation*}
	\begin{pmatrix}
	-A_{i,i+1} & 1 \\ 1 & 0
	\end{pmatrix}.
	\end{equation*}
Setting 
\begin{equation*}
\beta_{i, i+1} . A = \beta_{i, i+1}(A)\,A\,\beta_{i, i+1}(A)
\end{equation*}
defines an action of the braid group $\operatorname{Br}_{n+1} \cong \pi_1(\conf_n(\C))$. 

Fix a semisimple Frobenius structure on an open subset of $\conf_n(\C)$, and let $\calS$ denote the corresponding Stokes matrix. We can assume without loss of generality that $\calS$ is upper triangular. The set of Stokes matrices corresponding to analytic continuations of the structure is precisely the orbit of $\calS$ under the action of $\operatorname{Br}_{n+1}$, combined with the standard action of permutation matrices $P$ and change of sign matrices $I_i$, 
\begin{equation*}
A \mapsto P A P^{-1}, \quad A \mapsto I_i A I^{-1}_i.
\end{equation*}
This is the equivalence relation we check in our examples. 

In the following we fix a reference quiver $Q$ and consider its orbit under mutations. It is important to recall that if $Q_i$, $i = 1, 2$ are mutation equivalent there is a canonical bijection between their vertices and so a canonical bijection between the simple objects of $\A(Q_i)$. We fix once and for all a labelling of the vertices of $Q$, corresponding to a labelling $S_i$ for the simple objects of $A(Q)$. We will also use this specific induced labelling when writing bases of $K(\A(Q_i))$ for mutation-equivalent $Q_i$.  
\subsection{$A_n$ quivers} Let $\mu A_n$ be a quiver in the (finite) mutation orbit of $A_n$. Our aim is to write down good bases of $K(\A(\mu A_n))$ for which the conclusions of Lemma \ref{AnVanishing} and Corollary \ref{AnConnection} hold.  
\begin{enumerate}
\item If the unoriented graph underlying $\mu A_n$ is the same as $A_n$, then choose the basis 
	\begin{equation}\label{alphalinear}
	\alpha_i= \sum_{r=i}^n [S_r],\ i=1,\dots, n.
	\end{equation}
\item If a clockwise oriented triangle appears, 
	\begin{equation*}
	\xymatrix{
	&\ar[r]&\bullet_{_{k-1}}  \ar[rr]&& \bullet_{_{k+1}} \ar[dl]\ar[r] &\\
	&&&\bullet_{_k} \ar[ul] & &
}
	\end{equation*}
then consider the basis 
	\begin{equation}\label{alphatr}
	\begin{cases}
	\alpha_{i}=\sum_{j=i}^{k-2}[S_{j}] + \alpha_{k-1} \quad\text{ for }i\leq k-2\\
	\alpha_{k-1}= [S_{k-1}]+[S_{k+1}]+\alpha_{k+2}\\
	\alpha_k= -[S_k]+[S_{k+1}]+\alpha_{k+2}\\
	\alpha_{k+1}=[S_{k+1}] +\alpha_{k+2}\\
	\alpha_{i}=\sum_{j=i}^{n}[S_{j}] \quad\text{ for } i\geq k+2.
	\end{cases}
	\end{equation}
\item Triangles can be combined. This is straightforward if they are not overlapping, otherwise take 
\begin{equation*}
	\xymatrix@=13pt{
	 \bullet_{_{k-1}}  \ar[rr]&& \bullet_{_{k+1}}\ar[dl] \ar[rr] && \bullet_{_{k+3}}\ar[dl] \\
	&\bullet_{k} \ar[ul] & & \bullet_{_{k+2}}\ar[ul]
	}\quad
	\begin{cases}
	\alpha_{k-1}=[S_{k-1}]+[S_{k+1}]+[S_{k+3}] \\
	\alpha_k= -[S_k]+[S_{k+1}]+[S_{k+3}] \\
	\alpha_{k+1}= [S_{k+1}]+[S_{k+3}] \\
	\alpha_{k+2} = -[S_{k+2}]+[S_{k+3}]\\
	\alpha_{k+3} = [S_{k+3}].
	\end{cases}
	\end{equation*}
\item The last possible configuration we need to consider is 
	\begin{equation}\label{tr_3code}
	\xymatrix{
	\cdots\bullet_{_{l_i}}\ar[r] &\bullet_{_{k-1}}\ar[rr] && \bullet_{_{k+1}} \ar[dl]\ar[r] &\bullet_{_{r_i}}\cdots\\
	&& \bullet_{_{k}}\ar[ul]&&\\
	&&\bullet_{_{d_i}}\ar[u]\ar@{.}[d]&&\\
	&&&&&
	}
	\end{equation}
An admissible basis is 
	\begin{equation}\label{alphatr_3code}
	\begin{cases}
	\alpha_{r_i} &= \sum_{j\geq i}[S_{r_j}]\\	
	\alpha_{k+1} &= [S_{k+1}] + \alpha_{r_i}\\
	\alpha_{k-1} &= [S_{k-1}]+[S_{k+1}]+\alpha_{r_i}\\
	\alpha_k &= -[S_k]+[S_{k+1}]+\alpha_{r_i}\\
	\alpha_{l_i} &= \sum_{j\geq i}[S_{l_j}]+\alpha_{k-1}\\
	\alpha_{d_i} &= \sum_{j\geq i}[S_{d_j}]+\alpha_k.
	\end{cases}
	\end{equation}
\end{enumerate}  
Arguing precisely as in the proofs of Lemma \ref{AnVanishing} and Corollary \ref{AnConnection} we obtain the following.
\begin{lem}\label{AnConnectionMutation} Let $\mu A_n$ be mutation-equivalent to $A_n$. Then combining the configurations \eqref{alphalinear} - \eqref{alphatr_3code} above yields bases for $K(\A(\mu A_n))$ for which the conclusions of Lemma \ref{AnVanishing} and Corollary \ref{AnConnection} hold. 
\end{lem}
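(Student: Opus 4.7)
The plan is to follow the two-stage argument used for the standard $A_n$ quiver, replacing Lemma~\ref{AnVanishing} by a configuration-by-configuration verification and then invoking the same degree-counting argument as in Corollary~\ref{AnConnection}. Throughout, we use that $\D(\mu A_n) \simeq \D(A_n)$, so the DT invariants and the Euler form are intrinsic and only the labelling by simples changes across the mutation class.

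First, I would verify the quadratic conditions \eqref{quadraticCondition}. For each of the building blocks \eqref{alphalinear}--\eqref{alphatr_3code} this is a direct computation of the Euler pairings $\bra \alpha_i, \alpha_j\ket$ from the arrows of $\mu A_n$. In the linear case \eqref{alphalinear} this was already carried out in the proof of Lemma \ref{AnVanishing}. For the oriented triangle \eqref{alphatr} one checks by inspection that $\bra \alpha_i,\alpha_j\ket \in \{-1, 0, +1\}$ for all pairs, and that the nonzero values fit a skew-symmetric sign pattern $\epsilon_{ij}$ satisfying \eqref{quadraticEps} for every triple $(i,j,k)$ with $\len(\alpha_j-\alpha_i) < \text{appropriate bound}$; the key structural point is that $[S_{k-1}]+[S_{k+1}]$, $-[S_k]+[S_{k+1}]$ and $[S_{k+1}]$ differ pairwise by simples (or shifted simples) with nicely paired Euler form, so the quadratic identity reduces to the one already verified for $A_3$ (Example \ref{A3}). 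Combining blocks, whether overlapping or not, only produces pairs $(\alpha_i,\alpha_j)$ whose difference is either entirely inside one block, or passes between two blocks through a cut vertex; in both cases the pairing reduces to one of the already-checked cases.

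Second, I would verify the vanishing condition \eqref{vanishingCondition} modulo $({\bf s})^{\len(\alpha_i-\alpha_j)+1}$. This is the configuration-by-configuration analogue of the central claim of Lemma~\ref{AnVanishing}. For each configuration, the bases have been designed so that $\alpha_i - \alpha_j$ is a $\Z$-linear combination of classes of simples of $\mu A_n$ of a specific form, and its nontrivial decompositions $\alpha_i-\alpha_j = \beta+\gamma$ can be enumerated combinatorially (by partitioning the set of simples appearing, with the $\pm$ signs prescribed by whether one passes through a triangle). Exactly as in Lemma~\ref{AnVanishing}, the product $f^{\beta}_{\bf s}(Z) f^{\gamma}_{\bf s}(Z)$ lies in $({\bf s})^{\len(\alpha_i-\alpha_j)+1}$ unless both $\beta$ and $\gamma$ are supported on ``consecutive'' simples of $\mu A_n$ along the path from $i$ to $j$; the only such pairs are $\beta=\alpha_i-\alpha_k$, $\gamma=\alpha_k-\alpha_j$, which are excluded. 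The higher-order vanishing of $f^{\eta}_{\bf s}(Z)$ for a ``non-consecutive'' $\eta$ rests on the inductive step used in Lemma~\ref{AnVanishing}: $\dt_{\A(\mu A_n)}([S_h]+[S_k],Z)$ and $\bra [S_h],[S_k]\ket$ both vanish whenever $S_h$ and $S_k$ are not joined by an arrow of $\mu A_n$, which one reads off the quiver.

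Third, with these two local properties established for every configuration, I would run verbatim the argument of Corollary~\ref{AnConnection}. The point is that the $(i,j)$ entry of the Stokes product $\prod^{\to}_{\ell\subset\bar\calH}\calS_{\ell}(Z)$ is constant modulo $({\bf s})^{\len(\alpha_i-\alpha_j)+1}$ by Step~2, while any higher-degree contribution requires a decomposition of $\alpha_i-\alpha_j$ into more than $\len(\alpha_i-\alpha_j)$ nonzero summands, forcing at least one summand to lie in $-K(\A(\mu A_n))_{>0}$; this is incompatible with all Stokes rays sitting inside $\bar\calH$.

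The main obstacle is the combinatorial bookkeeping in Step~2 for the configuration \eqref{alphatr_3code}, where the triangle sits at the junction of three linear branches with possibly further triangles along $d_i$: one must check that every nontrivial decomposition of $\alpha_i-\alpha_j$ that mixes branches either factors through a $(\alpha_i-\alpha_k,\alpha_k-\alpha_j)$ split or else separates across a non-arrow of $\mu A_n$, in which case the relevant $\dt$-invariant vanishes and the higher-order bound kicks in. Once this case analysis is done for the four building blocks, the general statement follows because any $\mu A_n$ decomposes into (possibly overlapping) instances of them along cut edges, and the verification on each block pastes together.
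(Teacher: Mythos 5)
Your proposal follows exactly the route the paper takes: the paper's entire justification for this lemma is the sentence ``Arguing as for Lemma \ref{AnVanishing} and Corollary \ref{AnConnection} we obtain the following,'' i.e.\ a configuration-by-configuration check of the quadratic and vanishing conditions for the bases \eqref{alphalinear}--\eqref{alphatr_3code} followed by the same degree-counting argument for the Stokes product. Your write-up is in fact more explicit than the paper's about where the bookkeeping lives (notably the mixed-sign classes such as $-[S_k]+[S_{k+1}]+[S_{k+2}]$ arising from overlapping triangles, which make the final $-K_{>0}$ step more delicate than in the linear case), and correctly identifies that as the only nontrivial point.
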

\begin{proof}[Proof of Theorem \ref{mainThmMutations} (i)] This follows immediately from Lemma \ref{AnConnectionMutation}.
\end{proof}
Our next aim is to understand enough of the Stokes matrices of these semisimple Frobenius manifolds in order to prove part $(ii)$ of Theorem \ref{mainThmMutations} (which is restricted at present to the case $n \leq 5$). We present the computations below. In all cases it is possible to choose a central charge $Z$ so that the only stable objects are either simples or extensions between two simples, so Corollary \ref{simpleStokesCor} is sufficient to perform the computation.

\subsubsection*{Mutation classes of $A_2$}
\begin{align*}
A_2&=\xymatrix{\bullet_{_1}\ar[r]&\bullet_{_2}},\ \calS(A_2) =\begin{pmatrix} 1&-s\\0&1\end{pmatrix}\\
\mu_1A_2 &=\xymatrix{\bullet_{_1}&\bullet_{_2}\ar[l]},\ \calS(\mu_1A_2) =\begin{pmatrix}1&s\\0&1\end{pmatrix}.
\end{align*}

\subsubsection*{Mutation classes of $A_3$}
\begin{align*}
A_3 &= \xymatrix{ \bullet_{_1}\ar[r]&\bullet_{_2}\ar[r]&\bullet_{_3}}
 & \calS(A_3) &=\begin{pmatrix}1&-s_1&0\\0&1&-s_2\\0&0&1\end{pmatrix} \\
\mu_3A_3 &= \xymatrix{\bullet_{_1}\ar[r]&\bullet_{_2}&\bullet_{_3}\ar[l]}
& \calS(\mu_3A_3) &=\begin{pmatrix} 1&-s_1&0\\0&1&s_2\\0&0&1\end{pmatrix}\\
\mu_1\mu_3A_3 &= \xymatrix{\bullet_{_1}&\bullet_{_2}\ar[l]&\bullet_{_3}\ar[l]} 
& \calS(\mu_1\mu_3A_3) &=\begin{pmatrix} 1&s_1&s_1s_2\\0&1&s_2\\0&0&1\end{pmatrix}\\
\mu_1A_3 &= \xymatrix{\bullet_{_1}&\bullet_{_2}\ar[l]\ar[r]\ar@{}[dl]& \bullet_{_3}\\ && }
& \calS(\mu_1A_3) &= \begin{pmatrix} 1&s_1&-s_1s_2\\0&1&-s_2\\0&0&1 \end{pmatrix}\\
\mu_2A_3 &= \xymatrix@=15pt{\bullet_{_1}\ar[rr]&&\bullet_{_3}\ar[dl]\\ & \bullet_{_2}\ar[ul]& } 
 & \calS(\mu_2A_3) &=\begin{pmatrix}1&s_1s_2&-s_1\\0&1&0\\0&-s_2&1\end{pmatrix} \\
\mu_2\mu_1\mu_3A_3 &= \xymatrix@=15pt{\bullet_{_1}\ar[dr]&&\bullet_{_3}\ar[ll]\\ & \bullet_{_2}\ar[ur]& }
 & \calS(\mu_2\mu_1\mu_3A_3) &=\begin{pmatrix}1&-s_1s_2&s_1\\0&1&0\\0&-s_2&1\end{pmatrix}
\end{align*}

\subsubsection*{Mutation classes of $A_4$} It is enough to consider\\
$\mu_1A_4=\xymatrix{
	\bullet_{_1}&\bullet_{_2}\ar[l]\ar[r]&\bullet_{_3}\ar[r]&\bullet_{_4} 
	}$ 
	\begin{equation}\label{mu1_A4}
\calS(\mu_1 A_4)= \begin{pmatrix}
	1& s_1 & -s_1s_2&0\\
	0 & 1 & -s_2&0 \\
	0&0&1&-s_3\\
	0&0&0&1
	\end{pmatrix}
	\end{equation}
$\mu_4A_4=\xymatrix{
	\bullet_{_1}\ar[r]&\bullet_{_2}\ar[r]&\bullet_{_3}&\bullet_{_4}\ar[l]
	}$
	\begin{equation}\label{mu4_A4}
\calS(\mu_4 A_4)= \begin{pmatrix}
	1& -s_1 & 0&0\\
	0 & 1 & -s_2&0 \\
	0&0&1&s_3\\
	0&0&0&1
	\end{pmatrix}
	\end{equation}
$\mu_4\mu_2\mu_1A_4=\xymatrix{
	\bullet_{_1}\ar[r]&\bullet_{_2}&\bullet_{_3}\ar[l]&\bullet_{_4}\ar[l]
	}$
\begin{equation}\label{mu421_A4}
	\calS(\mu_4\mu_2\mu_1A_4)= \begin{pmatrix}
	1 & -s_1 & 0 & 0 \\ 0 & 1 & s_2 & s_2s_3 \\ 0 & 0 & 1 & s_3\\ 0 & 0 & 0 & 1
	\end{pmatrix}
	\end{equation}
$\mu_1\mu_4A_4\xymatrix{
	\bullet_{_1}&\bullet_{_2}\ar[l]\ar[r]&\bullet_{_3}&\ar[l]\bullet_{_4}
	}$
	\begin{equation}\label{mu14_A4}
\calS(\mu_1\mu_4 A_4)=\begin{pmatrix}
	1 & s_1 & -s_1s_2 & 0 \\
	0 & 1 & -s_2 & 0 \\
	0 & 0 & 1 & s_3 \\
	0 & 0 & 0 & 1
	\end{pmatrix}
	\end{equation}
$\mu_2\mu_1A_4=\xymatrix{
	\bullet_{_1}\ar[r]&\bullet_{_2}&\bullet_{_3}\ar[l]\ar[r]&\bullet_{_4}
	}$
	\begin{equation}\label{mu21_A4}
\calS(\mu_2\mu_1A_4)= \begin{pmatrix}
	1 & -s_1 & 0 & 0 \\  0 & 1 & s_2 & -s_2s_3 \\ 0 & 0 & 1 & -s_3\\ 0 & 0 & 0 & 1
	\end{pmatrix}
	\end{equation}
$\mu_1\mu_2\mu_1 A_4=\xymatrix{
	\bullet_{_1}&\bullet_{_2}\ar[l]&\bullet_{_3}\ar[l]\ar[r]&\bullet_{_4}
	}$
	\begin{equation}\label{mu121_A4}
	\calS(\mu_1\mu_2\mu_1 A_4) = \begin{pmatrix}
	1 & s_1 & s_1s_2 &-s_1s_2s_3\\
	0 & 1 & s_2 & -s_2s_3\\
	0 & 0 & 1 &-s_3\\
	0 & 0 & 0 &1
	\end{pmatrix}
	\end{equation}
$\mu_4\mu_1\mu_2\mu_1 A_4=\xymatrix{
	\bullet_{_1}&\bullet_{_2}\ar[l]&\bullet_{_3}\ar[l]&\bullet_{_4}\ar[l]
	}$
	\begin{equation}\label{mu4121_A4}
\calS(\mu_4\mu_1\mu_2\mu_1 A_4)= \begin{pmatrix}
	1 & s_1 & s_1s_2 &s_1s_2s_3\\
	0 & 1 & s_2 & s_2s_3\\
	0 & 0 & 1 &s_3\\
	0 & 0 & 0 &1
	\end{pmatrix}
	\end{equation}
$\mu_2A_4=\xymatrix@=15pt{
	\bullet_{_1}\ar[rr]&&\bullet_{_3}\ar[r]\ar[dl]&\bullet_{_4}\\
	& \bullet_{_2}\ar[ul]&&
	}$
	\begin{equation}\label{mu2_A4}
\calS(\mu_2 A_4)= \begin{pmatrix}
	1 & -s_1s_2 & -s_1 &0\\ 0 &1& 0 & 0\\ 0 &s_2 & 1&-s_3\\
	0& 0 & 0 & 1
	\end{pmatrix}
	\end{equation}
$\mu_4\mu_2A_4=\xymatrix@=15pt{
	\bullet_{_1}\ar[rr]&&\bullet_{_3}\ar[dl]&\bullet_{_4}\ar[l]\\
	& \bullet_{_2}\ar[ul]&&
	}$
	\begin{equation}\label{mu42_A4}
\calS(\mu_4\mu_2 A_4)= \begin{pmatrix}
	1 & -s_1s_2 & -s_1 &0\\ 0 &1& 0 & 0\\ 0 &s_2 & 1& s_3\\
	0& 0 & 0 & 1
	\end{pmatrix}
	\end{equation}
$\mu_3A_4=\xymatrix@=15pt{
	\bullet_{_1}\ar[r]&\bullet_{_2}\ar[rr]&&\bullet_{_4}\ar[dl]\\
	&& \bullet_{_3}\ar[ul]&
	}$
	\begin{equation}\label{mu3_A4}
\calS(\mu_3 A_4)= \begin{pmatrix}
	1 & -s_1 & 0 &0\\ 0 &1& -s_2s_3 & -s_2\\ 0 &0 & 1&0\\
	0& 0 & s_3 & 1
	\end{pmatrix}
	\end{equation}
$\mu_1\mu_3A_4=\xymatrix@=15pt{
	\bullet_{_1}&\bullet_{_2}\ar[l]\ar[rr]&&\bullet_{_4}\ar[dl]\\
	&& \bullet_{_3}\ar[ul]&
	}$
	\begin{equation}\label{mu13_A4}
\calS(\mu_1\mu_3 A_4)= \begin{pmatrix}
	1 & s_1 & -s_1s_2s_3 &-s_1s_2\\ 0 &1& -s_2s_3 & -s_2\\ 0 &0 & 1&0\\
	0& 0 & s_3 & 1
	\end{pmatrix}
	\end{equation}
	
\subsubsection*{Mutation classes of $A_5$}
It is enough to consider\\
$\mu_1A_5=\xymatrix{
	\bullet_{_1}&\bullet_{_2}\ar[l]\ar[r]&\bullet_{_3}\ar[r]&\bullet_{_4} \ar[r]&\bullet_{_5}
	}$ 
	\begin{equation}\label{mu1_A5}
\calS(\mu_1 A_5)= \begin{pmatrix}
	1& s_1 & -s_1s_2&0&0\\
	0 & 1 & -s_2&0&0 \\
	0&0&1&-s_3&0\\
	0&0&0&1&-s_4\\
	0&0&0&0&1
	\end{pmatrix}
	\end{equation}
$\mu_5A_5=\xymatrix{
	\bullet_{_1}\ar[r]&\bullet_{_2}\ar[r]&\bullet_{_3}\ar[r]&\bullet_{_4}&\bullet_{_5}\ar[l]
	}$
	\begin{equation}\label{mu5_A5}
\calS(\mu_5 A_5)=\begin{pmatrix}
	1& -s_1 & 0&0&0\\
	0 & 1 & -s_2&0&0 \\
	0&0&1&-s_3&0\\
	0&0&0&1&s_4\\
	0&0&0&0&1
	\end{pmatrix}
	\end{equation}
$\mu_3A_5=\xymatrix@=15pt{
	\bullet_{_1}\ar[r]&\bullet_{_2}\ar[rr]&&\bullet_{_4}\ar[dl]\ar[r]&\bullet_{_5}\\
	&& \bullet_{_3}\ar[ul]&
	}$
	\begin{equation}\label{mu3_A5}
\calS(\mu_3 A_5)= \begin{pmatrix}
	1 & -s_1 & 0 &0&0\\ 0 &1& -s_2s_3 & -s_2&0\\ 0 &0 & 1&0&0\\
	0& 0 & s_3 & 1&-s_4\\ 0&0&0&0&1
	\end{pmatrix}
	\end{equation}
$\mu_1\mu_4A_5 = \xymatrix{
	 \bullet_{_1}  & \bullet_{_2} \ar[l]\ar[r]& \bullet_{_3}\ar[rr]	&& \bullet_{_5}\ar[dl] \\
	& & & \bullet_{_4}\ar[ul]
	}$
	\begin{equation}\label{mu14_A5}
	\calS(\mu_1\mu_4 A_5) = \begin{pmatrix}
	1 & s_1 & -s_1s_2 & 0 & 0  \\
	 0 & 1 & -s_2 & 0 & 0\\  0 & 0 & 1 &-s_3s_4 &-s_3\\  0 & 0 & 0 &1 & 0\\ 0 & 0 & 0 & s_4&1
	\end{pmatrix}
	\end{equation}
$\mu_2\mu_4A_5 = \xymatrix@=13pt{
	 \bullet_{_1}  \ar[rr]&& \bullet_{_3}\ar[dl] \ar[rr]	&& \bullet_{_5}\ar[dl] \\
	&\bullet_2 \ar[ul] & & \bullet_{_4}\ar[ul]
	}$
	\begin{equation}\label{mu24_A5}
	\calS(\mu_4\mu_2 A_5)= \begin{pmatrix}
	1 & -s_1s_2 & -s_1 &0 &0\\ 0 &1&0 & 0 &0\\ 0 &s_2 & 1&-s_3s_4 &-s_3\\
	0 &0 &0 & 1& 0\\
	0& 0 &0 & s_4 & 1
	\end{pmatrix}
	\end{equation}
$\mu_2\mu_1\mu_4A_5 = \xymatrix{
	 \bullet_{_1}  \ar[r]& \bullet_{_2} & \bullet_{_3}\ar[l]\ar[rr]	&& \bullet_{_5}\ar[dl] \\
	& & & \bullet_{_4}\ar[ul]
	}$
	\begin{equation}\label{mu214_A5}
	\calS(\mu_2\mu_1\mu_4 A_5)=\begin{pmatrix}
	1 & -s_1 &  0 & 0 & 0 \\  0 & 1 & s_2 & -s_2s_3s_4 & -s_2s_3\\  0 & 0 & 1 &-s_3s_4 & -s_3\\ 0 & 0 & 0 &1& 0 \\ 0 & 0 & 0 &s_4&1
	\end{pmatrix}
	\end{equation}
$\mu_1\mu_2\mu_1\mu_4A_5 = \xymatrix{
	 \bullet_{_1}  & \bullet_{_2} \ar[l]& \bullet_{_3}\ar[l]\ar[rr]	&& \bullet_{_5}\ar[dl] \\
	& & & \bullet_{_4}\ar[ul]
	}$
	\begin{equation}\label{mu1214_A5}
	\calS(\mu_1\mu_2\mu_1\mu_4 A_5)=\begin{pmatrix}
	1 & s_1 & s_1s_2 & -s_2s_3s_4 & -s_1s_2s_3s_4\\  0 & 1 & s_2 & -s_2s_3s_4 & -s_2s_3\\  0 & 0 & 1 &-s_3s_4 & -s_3\\  0 & 0 & 0 &1& 0 \\ 0 & 0 & 0 &s_4&1
	\end{pmatrix}
	\end{equation}
$\mu_1\mu_3A_5 =\xymatrix{
	\bullet_{_1}&\bullet_{_2}\ar[l]\ar[rr]&&\bullet_{_4}\ar[dl]\ar[r] & \bullet_{_5} \\
	&& \bullet_{_3}\ar[ul]& &
	}$
	\begin{equation}\label{mu13_A5}
	\calS(\mu_1\mu_3 A_5)= \begin{pmatrix}
	1 & s_1 & -s_1s_2s_3 &-s_1s_2 &0\\ 0 &1& -s_2s_3 & -s_2 &0\\ 0 &0 & 1&0 &0\\
	0 &0 &s_3 & 1& -s_4\\
	0& 0 &0 & 0 & 1
	\end{pmatrix}
	\end{equation}
$\mu_5\mu_1\mu_3A_5 = \xymatrix{
	\bullet_{_1}&\bullet_{_2}\ar[l]\ar[rr]&&\bullet_{_4}\ar[dl] & \bullet_{_5} \ar[l]\\
	&& \bullet_{_3}\ar[ul]& &
	}$
	\begin{equation}\label{mu513_A5}
	\calS(\mu_5\mu_1\mu_3 A_5) = \begin{pmatrix}
	1 & s_1 & -s_1s_2s_3 &-s_1s_2 &0\\ 0 &1& -s_2s_3 & -s_2 &0\\ 0 &0 & 1&0 &0\\
	0 &0 &s_3 & 1& s_4\\
	0& 0 &0 & 0 & 1
	\end{pmatrix}
	\end{equation}
We may now evaluate the Stokes matrices at the special point ${\bf s}_J$ and compare them.
\begin{proof}[Proof of Theorem \ref{mainThmMutations} (ii).] Write $\calS$ for $\calS_{|{\bf s}=\bf{1}}(A_n)$ and $\calS(\mu A_n)$ for $\calS_{|{\bf s}=\bf{1}}(\mu A_n)$ for brevity. 

First we observe that when $\mu$ is a simple mutation then $\calS(\mu A_n)$ and $\calS$ are actually related by the action of permutation and diagonal matrices $I_k$, or that of the braid group. Specifically:
	\begin{align*}
	\text{if }\mu=\mu_1 \text{ then } \calS(\mu A_n)&=\beta_{1,2}.\calS,\\
	\text{if }\mu=\mu_k,\ k=2,\dots,n-1, \text{ then } \calS(\mu A_n)&=P_{k,k+1}\big( \beta_{k,k+1}.\calS \big)P_{k,k+1},\\
	\text{if }\mu=\mu_n \text{ then } \calS(\mu A_n)&=I_n\calS I_n,
	\end{align*}
where $I_k$, $k=1,\dots, n$, is the matrix which differs from the identity only for the sign of the $(k,k)$ entry. This is enough to cover the case $n \leq 3$.

For $n = 4, 5$ we can use various symmetries and reduce the claim to checking a small number of cases, namely \eqref{mu1_A4}-\eqref{mu14_A4}, \eqref{mu2_A4}-\eqref{mu13_A4} and \eqref{mu1_A5}-\eqref{mu24_A5}, \eqref{mu13_A5}-\eqref{mu513_A5} above. 

In the case of $A_4$ we compute:
\begin{itemize}
\item[\eqref{mu21_A4}] $\calS(\mu_2\mu_1 A_4) = \beta_{3,4}^{-1}.\calS$,
\item[\eqref{mu121_A4}] $\calS(\mu_1\mu_2\mu_1 A_4) = \beta_{1,2}.\big(\beta_{2,3}.\big(\beta_{1,2}.\calS\big)\big)$,
\item[\eqref{mu4121_A4}] $\calS(\mu_4\mu_1\mu_2\mu_1 A_4) = I_4\calS(\mu_1\mu_2\mu_1 A_4) I_4$.
\end{itemize}
Similarly in the case of $A_5$ we find:
\begin{itemize}
\item[\eqref{mu214_A5}] $\calS(\mu_2\mu_1\mu_4 A_5) = \beta_{2,3}.\big(\beta_{1,2}.\calS(\mu_4 A_5)\big)$,
\item[\eqref{mu1214_A5}] $\calS(\mu_1\mu_2\mu_1\mu_4 A_5) = \beta_{1,2}.\calS(\mu_2\mu_1\mu_4 A_5) = I_5 \calS(\mu_1\mu_2\mu_1 A_5) I_5$.
\end{itemize}
\end{proof}

\subsection{Further examples} Let us consider the mutation-equivalent quivers of Figure \ref{introAnnulusFigure}. They come from triangulations of a surface with two boundary components, with one and two marked points respectively. For the unoriented cycle we have
\begin{equation*}
	\begin{cases}
	\alpha_1= [S_1]+\qquad [S_3] \\
	\alpha_2= \qquad - [S_2] + [S_3] \\
	\alpha_3= \qquad\qquad [S_3] 
	\end{cases} \qquad 
	\calS = \left(\begin{matrix}
1 & -s_1 s_2 & s_1 \\
 0  &  1 & 0 \\
 0  &  -s_2  & 1 
\end{matrix}\right).
	\end{equation*}
For the oriented cycle with a double arrow we have
\begin{equation*}
\begin{cases}
	\alpha_1= [S_1]+[S_2]+ [S_3] \\
	\alpha_2= \qquad [S_2] + [S_3] \\
	\alpha_3= \qquad\qquad [S_3] 
	\end{cases} \qquad 
\calS' = \left(\begin{matrix}
1 & -s_1 & -s_1 s_2 \\
 0  &  1 & s_2 \\
 0 &  0   & 1 
\end{matrix}\right).
\end{equation*}
After evaluation at the point ${\bf s}_J$, $\calS$ and $\calS'$ may be compared and we have
	\begin{equation*}
	\calS = I_1 I_2 P_{2,3} \calS' P_{2,3} I_2 I_1.
	\end{equation*}

\end{document}